\newtheorem{thm}{Theorem}
\newtheorem{lem}[thm]{Lemma}
\newcommand{\bl}[1]{\boldsymbol{#1}}
\newcommand{\dnorm}[1]{{\left\vert\kern-0.25ex\left\vert\kern-0.25ex\left\vert #1 
    \right\vert\kern-0.25ex\right\vert\kern-0.25ex\right\vert}}
\newcommand{\norm}[1]{\left\|{#1}\right\|}
\newcommand{\snorm}[1]{\left|{#1}\right|}
\newcommand{\inner}[1]{\bigg({#1}\bigg)}
\newcommand{\Gh}[0]{{\Gamma_{\rm h}}}
\newcommand{\Gg}[0]{{\Gamma_{\rm g}}}
\newcommand{\Ot}[0]{{\tilde \Omega}}
\newcommand{\An}[0]{{\bl A_{\rm n}}}
\newcommand{\res}[0]{{\pmb{\mathscr{L}}}}
\newcommand{\rres}[0]{{\mathscr{L}}}
\title{A new stabilized time-spectral finite element solver for fast simulation of blood flow} 
\author[1,*]{Mahdi Esmaily}
\author[1]{Dongjie Jia}
\affil[1]{Sibley School of Mechanical and Aerospace Engineering, Cornell University, Ithaca, NY, USA}
\affil[*]{Correspondence: me399@cornell.edu}
\date{}
\begin{document}
\maketitle

\begin{abstract}
    The increasing application of cardiorespiratory simulations for diagnosis and surgical planning necessitates the development of computational methods significantly faster than the current technology. To achieve this objective, we leverage the time-periodic nature of these flows by discretizing equations in the frequency domain instead of the time domain. This approach markedly reduces the size of the discrete problem and, consequently, the simulation cost. With this motivation, we introduce a finite element method for simulating time-periodic flows that are physically stable. The proposed time-spectral method is formulated by augmenting the baseline Galerkin's method with a least-squares penalty term that is weighed by a positive-definite stabilization matrix. An error estimate is established for the convective-diffusive system, showing that the proposed method emulates the behavior of existing standard time methods including optimal convergence rate in diffusive regimes and stability in strong convection. This method is tested on a patient-specific Fontan model at nominal Reynolds and Womersley numbers of 500 and 10, respectively, demonstrating its ability to replicate conventional time simulation results using as few as 7 modes at 11\% of the computational cost. Owing to its higher local-to-processor computation density, the proposed method also exhibits improved parallel scalability for fast simulation of time-critical applications.
\end{abstract}

\section{Introduction}

The widespread adoption of cardiorespiratory simulations for patient-specific surgical planning relies on the development of methods that are both fast and affordable~\cite{taylor2009patient}. 
The effectiveness of this predictive technology is contingent upon its ability to generate optimal surgical designs within a time frame shorter than the interval between diagnosis and operation~\cite{Yang20102135, Marsden20081890, soerensen2007introduction}. 
Taking stage-one operation performed on single ventricle children as an example, one may have a day at most to perform simulations and utilize its predictions for surgical planning purposes~\cite{blalock1945surgical, Norwood1980hypoplastic, Esmaily2015ABG}. 
With the existing technology, such computations take longer than a week given that an optimization requires many simulations with each taking hours to complete even if they are parallelized~\cite{Esmaily2012optimization, verma2018optimization}. 

Even in cases where simulation turnover time is not a limiting factor, the cost of these calculations can impede widespread industrial adoption.
A notable example is the simulation-based prediction of fractional-flow-reserve (FFR$_{\rm CT}$) for diagnosing coronary artery disease in adults~\cite{driessen2019comparison, modi2019predicting}. 
Despite its deployment at an industrial scale across many hospitals, this non-intrusive diagnostic method relies on the assumption of steady-state flow. 
While this assumption may be physically inaccurate, it is utilized in practice for cost-saving reasons to make such computations more economically viable for large-scale deployment.

Addressing the constraints of simulation turnover time in single ventricles or reducing the cost of unsteady flow analysis for improved FFR$_{\rm CT}$ prediction necessitates the introduction of simulation technology significantly faster than the existing methods. 
To achieve this, we propose the idea of simulating these flows in a time-spectral (or frequency) domain. 
Physically \emph{stable} cardiorespiratory flows are periodic and vary smoothly in time; thus, their temporal behavior can be well-approximated using a few Fourier modes. 
That, in contrast to the thousands of time steps used in conventional methods, presents a unique opportunity to reduce the number of computed unknowns and, consequently, the overall cost of these calculations.

To capitalize on this opportunity, the current study introduces a stabilized time-spectral finite element method for simulating incompressible flows that are inherently periodic in time. 
The proposed Galerkin/least-squares (GLS) method is grounded in a collection of classical approaches from the 1980s and 1990s initially devised for incompressible and compressible flow simulations in the time domain~\cite{hughes1987recent, franca1992stabilized, hughes1995multiscale, hauke1994unified, codina2000stabilization, hughes2010stabilized}.
Specifically, we draw inspiration from the streamline-upwind/Petrov-Galerkin (SUPG) method to ensure numerical stability in the presence of strong convection~\cite{hughes1979multidimentional, brooks1982streamline}. 
The convenient use of equal-order interpolation functions for velocity and pressure~\cite{ladyzhenskaya1969mathematical, babuvska1971error, brezzi1974existence} is facilitated by extending the pressure-stabilized/Petrov-Galerkin (PSPG) method to the present scenario involving multi-modal solutions~\cite{hughes1986circumventing, tezduyar1991stabilized}.
The diagonalization technique, originally developed for compressible flows, is applied in this work to address mode coupling~\cite{hughes1986generalized}. 
The extension of the current method to multidimensional domains discretized by arbitrary elements builds upon fundamental concepts introduced in the past for compressible flows~\cite{shakib1991new}.
The Galerkin/least-squares framework, selected here as the baseline for constructing a stabilized method, boasts a rich history of application across various contexts, including fluid dynamics~\cite{hauke1998comparative, hughes1989new, shakib1989finite}.

The specific choice of methods employed in this study is informed by a series of techniques that we have introduced earlier for time-spectral simulations of the Stokes and convection-diffusion equations~\cite{meng2020time, esmaily2023stabilized, esmaily2023stabilizedA}. 
The first proof-of-concept study showcased the feasibility of simulating blood flow in the time-spectral domain at zero Reynolds number using a Bubnov-Galerkin method~\cite{meng2020time}. 
Subsequently, we demonstrated that the time-spectral Stokes equations, akin to their temporal counterparts, can be solved using the same interpolation functions, provided the formulation is adapted to include the Laplacian of pressure in the continuity equation. 
This modification, plus a change of variable that enabled its implementation using real arithmetic, demonstrated the convenient use and clear cost advantage of the time-spectral method over its temporal counterpart for the simulation of cardiorespiratory flows at zero Reynolds number~\cite{esmaily2023stabilized}. 

Most recently, we focused on the time-spectral form of the convection-diffusion equation to investigate various stabilization strategies for convection-dominant flows as well as reducing dispersion and dissipation errors at high Womersley numbers~\cite{esmaily2023stabilizedA}. 
Five methods were compared in this study: the baseline Petrov-Galerkin (GAL), SUPG, variational multiscale (VMS), GLS, and a novel method tailored for the time-spectral convection-diffusion equation, termed the augmented SUPG method (ASU). 
The key observation was that, although ASU stands out as the most accurate method, it may become unstable at very high Womersley numbers. 
In contrast, the GLS (or equivalently VMS) method was reasonably accurate and always stable, making it an appealing framework for constructing stabilized methods in more general cases. 
Consequently, we rely on this framework to develop a time-spectral solver for the Navier-Stokes equations in the present study.

To the best of our knowledge, the present study is the first to introduce a stabilized time-spectral finite element method for the solution of the Navier-Stokes equations. 
Furthermore, it pioneers the application of time-spectral methods for fast simulation of cardiovascular flows involving complex geometries at physiologic Reynolds numbers. 
However, this is not the first numerical implementation of a frequency-based method, nor is it the initial application of this technology to real-world problems.
Finite volume and finite difference methods have been developed in the past to capitalize on the periodic behavior of the underlying flow \cite{jameson2002application, mcmullen2003application, gopinath2005time}.
These methods have found particular use in modeling turbomachinery flows, which are characterized by time-periodicity and a known rotor rotational frequency \cite{mcmullen2001acceleration, gopinath2007three, he2002analysis, sicot2012time}. 
The preference for frequency-based methods in simulating these flows, beyond their cost-effectiveness, is justified by their superior parallel scalability compared to standard time methods \cite{arbenz2017comparison, hupp2016parallel}. 
In this context, multiple variants of frequency-based techniques have been proposed to boost their overall performance \cite{hall2013harmonic, woiwode2020comparison}.
A noteworthy variant, the harmonic balance method, has recently been applied to investigate modes triggering instabilities in boundary layer transition \cite{hall2002computation, rigas2021nonlinear}, demonstrating wider adoption of the frequency-based methods for fluid dynamics. 

The current study extends the concept of solving problems in the frequency domain to cardiovascular flows. 
To do so, we opt for the finite element method for spatial discretization due to its versatility, widespread use in modeling biological flows~\cite{bermejo2015clinical, mittal2001application, updegrove2017simvascular, jia2022characterization, jia2021efficient}, flexibility for future expansion to address complex boundary conditions~\cite{Lagana2002359, Formaggia2002, Vignon20063776, Vignon2010625, Esmaily2013coupling}, fluid-structure interaction~\cite{Bazilevs20083, Bazilevs2009computational, long2013fluid, kamensky2015immersogeometric, Esmaily2012multipleSPS}, thrombosis~\cite{arzani2014longitudinal, Esmaily2013RT, shadden2013potential, rydquist2022cell}, and growth-and-remodeling~\cite{wu2015coupled, bangalore2017towards, coogan2013computational}.

This study demonstrates the potential benefits of Fourier-based frequency discretization in simulating cardiorespiratory flows. 
That said, several challenges remain to be addressed by the future studies. 
Firstly, the proposed method's cost scales quadratically with the number of simulated modes. 
Developing a method with linear cost scaling is crucial for establishing the time-spectral method as a no-brainer simulation strategy for a broader range of applications. 
Secondly, the current formulation cannot be applied to physically unstable flows, such as chaotic flows or those involving vortex shedding, due to the necessity of specifying the fundamental frequency at which these events occur. 
Thirdly, hemodynamic simulations are often performed in tandem with other physics to model, for instance, fluid-structure interaction, thrombosis, growth and remodeling, etc. 
While that ecosystem already exists for the time formulation, it must be developed for the time-spectral formulation.

The article is organized as follows. 
In the next section, the GLS method is constructed for the Navier-Stokes equation through a series of model problems.
In Section \ref{sec:result}, the proposed method is tested using a realistic patient-specific case involving the Fontan operation. 
The conclusions are drawn in Section \ref{sec:conclusion}. 

%%%%%%%%%%%%%%%%%%%

\section{Method}
To systematically arrive at a time-spectral formulation for the Navier-Stokes, we first consider the convection-diffusion equation in one dimension in Section \ref{sec:1D}, then generalize it to multiple dimensions in Section \ref{sec:AD}. 
The properties of this method will be studied in Section \ref{sec:analysis}.
A formulation for the Navier-Stokes will be introduced and optimized in Sections \ref{sec:NS} and \ref{sec:opt}, respectively.
Issues related to solution instability in the presence of backflow at the Neumann boundaries will be addressed in Section \ref{sec:bfs}. 
 
As a general rule, variables are selected based on the following convention: italic $f$ for scalar quantities, bold-italic $\bl f$ for vectors, and capital-bold-italic $\bl F$ for matrices. There are exceptions to this rule as, for instance, $\bl \tau$ denotes a matrix. As for subscripts, we employ $n$ and $m$  to denote the mode number and in general $i$ and $j$ to denote direction. Roman subscripts or superscripts $f^{\rm A}_{\rm B}$ are used to construct new variables. 
Variables expressed in time and frequency domains are distinguished using a hat for the time version as, e.g., $f_n$ denotes Fourier discretization of $\hat f(t)$. 
This unconventional notation is adopted here to avoid clutter in equations and emphasize the primary role of the spectral variables that appear in the final implementation of the solver.

%%%%%%%%%%%%
\subsection{A model problem: 1D convection-diffusion} \label{sec:1D}
Earlier in \cite{esmaily2023stabilizedA}, we discussed how finite element discretization of the time-spectral form of the convection-diffusion equation can be stabilized for a given steady flow. Our goal in the present and next section is to generalize that stabilization technique to cases in which the flow is unsteady. For this purpose, consider the unsteady convection of an unreactive neutral tracer $\hat \phi(x,t)$ in a one-dimensional domain that is governed by
\begin{equation}
\begin{split}
    \frac{\partial \hat \phi}{\partial t} + \hat u \frac{\partial \hat \phi}{\partial x} &= \kappa \frac{\partial^2 \hat \phi}{\partial x^2}, \\
    \hat \phi(0,t) &= 0, \\
    \hat \phi(L,t) &= \hat g(t),
\end{split}
\label{1D-time}
\end{equation}
where $L\in \mathbb R$ is the domain size, $\kappa \in \mathbb R^+$ is the diffusivity, $\hat g(t) \in \mathbb R$ is the imposed time-dependent boundary condition, and $\hat u(t) \in \mathbb R$ is the given unsteady convective velocity that is uniform in the entire domain.

No initial condition was specified in \eqref{1D-time} since we are only interested in the particular solution (i.e., $\hat \phi(x,t)$ as $t \to \infty$) that is independent of the initial transient behavior of $\hat \phi$ when $\kappa > 0$. 

\eqref{1D-time} is discretized in the time-spectral domain as
\begin{equation}
    \hat \phi(x,t) = \sum_{|n|<N} \phi_n(x) e^{\hat in\omega t},
    \label{phiDef}
\end{equation} 
where $\hat i \coloneqq \sqrt{-1}$ and $\omega\in \mathbb R$ denotes the base frequency and is related to the breathing or cardiac cycle duration $T$ through $\omega \coloneqq 2\pi/T$.
Out of $2N-1$ modes included in \eqref{phiDef} for discretization, only $N$ modes (including the steady mode $n=0$) are independent. That is so since $\hat \phi(x,t)$ is a real function and hence $\phi_{-n} = \phi_n^*$ with $f^*$ denoting complex conjugation of $f$. 

Similarly to \eqref{phiDef}, the imposed boundary condition and convective velocity are discretized as
\begin{equation}
\begin{split}
    \hat g(t) &= \sum_{|n|<N} g_n e^{\hat in\omega t}, \\
    \hat u(t) & = \sum_{|n|<N} u_n e^{\hat in\omega t},
\end{split} 
    \label{guDef} 
\end{equation}
where the corresponding Fourier coefficients are computed as
\begin{equation}
\begin{split}
     g_n & \coloneqq \frac{1}{T}\int_0^T \hat g(t) e^{-\hat in\omega t} dt, \\
     u_n & \coloneqq \frac{1}{T}\int_0^T \hat u(t) e^{-\hat in\omega t} dt.
\end{split}
    \label{guInv} 
\end{equation} 

With these definitions, \eqref{1D-time} is discretized in the time-spectral domain to obtain
\begin{equation}
\begin{split}
    \hat i \underline m \omega \phi_{\underline m} + u_{\underline m-n} \frac{d \phi_n}{d x} & = \kappa \frac{d^2 \phi_{\underline m}}{dx^2}. \\
    \phi_n(0) &= 0, \\
    \phi_n(L) &= g_n,
\end{split}
\label{1D-spec}
\end{equation}
where repeated indices imply summation except for those with an underline $\underline m$ that denote a free index.
Note that \eqref{1D-spec} is a boundary value problem that takes the form of a \emph{steady} convection-diffusion equation with a nonzero imaginary source term. 

In deriving \eqref{1D-spec} from \eqref{1D-time}, we introduced two sources of error.  
The first is related to \eqref{guDef}, where the given $\hat g(t)$ and $\hat u(t)$ are approximated using a finite number of modes. 
Secondly, due to the product of $\hat u$ and $\hat \phi_{,x}$, the highest mode appearing in the convective term is $2N-1$. 
Since that number is larger than the highest mode simulated, i.e., $N-1$, we committed a truncation error in writing \eqref{1D-spec}. 
In practice, however, the error committed by these approximations will be small if the solution is sufficiently smooth so that $\|\phi_n\|$ drops at a fast rate with $n$. 
Later in the result section, we will evaluate these truncation errors for a physiologically relevant test case.   

In writing \eqref{phiDef}, we considered a uniform discretization in the frequency domain with all modes being an integer multiple of $\omega$. Due to modal interaction produced by the convective term in \eqref{1D-spec}, which injects energy to mode $n\pm m$ from modes $n$ and $m$, this uniform discretization is generally an appropriate choice. That is particularly the case when energy is injected at lower modes (i.e., a truncated series in \eqref{guInv} will provide a good approximation to $\hat g$) and cascaded to energize higher modes. However, in special scenarios, the solution may contain two distinct frequencies $\omega_1$ and $\omega_2$ with no (or too small of a) common divisor $\omega$. The discussion of such scenarios, which will likely occur when flow becomes physically unstable, is left for future studies.  

Before proceeding any further to build a stabilized method for \eqref{1D-spec}, it will be instrumental to summarize the findings of an earlier study \cite{esmaily2023stabilizedA} regarding the stabilized solution of a simpler form of \eqref{1D-spec} in strongly convective steady flows when $u_n = 0$ for $n\ne 0$.
In this case, the convective term in \eqref{1D-spec} will no longer couple solutions at various modes, thus permitting us to write 
\begin{equation}
    \hat i \omega \phi + u \frac{d\phi}{dx} = \kappa \frac{d^2 \phi}{dx^2}, 
\label{1D-steady}
\end{equation}
with $u$, $\omega$, and $\phi$ denoting the steady fluid velocity (i.e., $u_0$), the single frequency under consideration, and the solution at that frequency, respectively.

In that study, we showed that supplementing the baseline Galerkin method with the least-squares penalty terms produces a reasonably accurate scheme, called the Galerkin/least-squares (GLS), that exhibits excellent stability. 
In a semi-discrete form, this method, after proper treatment of the boundary conditions, can be stated as finding $\phi^h$ so that for any $w^h$ 
\begin{equation}
       \underbrace{\left(w^h, \hat i\omega \phi^h + u\frac{d\phi^h}{dx} \right)_\Omega + \left(  \frac{dw^h}{dx}, \kappa \frac{d\phi^h}{dx} \right)_\Omega}_\text{Galerkin's terms} + \underbrace{\inner{ \rres(w^h), \tau \rres(\phi^h) }_\Ot}_\text{Least-square penalty terms} = 0,
        \label{1D-Sweak}
\end{equation}
where
\begin{equation*}
    \Ot \coloneqq \bigcup_e \Omega_e
\end{equation*}
is the union of all elements' interior $\Omega_e$.
In general, the inner product $(\cdot,\cdot)_S$ for given complex vector functions $\bl f(\bl x)$ and $\bl g(\bl x)$ over $S$ is defined as  
\begin{equation}
    \inner{\bl f, \bl g}_S \coloneqq \int_S \bl f^H \bl g dS.
    \label{inner_def}
\end{equation} 
where $\bl f^H\coloneqq (\bl f^*)^T$ denotes conjugate transpose.
Furthermore, in \eqref{1D-Sweak}
\begin{equation*}
\rres(\phi^h) \coloneqq \hat i\omega\phi^h + u\frac{d\phi^h}{dx} - \kappa \frac{d^2 \phi^h}{dx^2},
\end{equation*}
denotes the differential operator associated with \eqref{1D-steady}.
The stabilization parameter $\tau$ in \eqref{1D-Sweak}, which plays a key role in the performance of this method, can be defined such that in the steady limit when $\omega=0$ and for linear piecewise interpolation functions the resulting method becomes nodally exact. 
After some approximation to simplify future generalization to multiple dimensions~\cite{hughes1986generalized,shakib1991new,bazilevs2007variational}, that results in 
\begin{equation}
    \tau = \left[\left(\frac{2u}{h}\right)^2 + \left( \frac{12\kappa}{h^2} \right)^2\right]^{-\frac{1}{2}},
\label{1D-Stau}
\end{equation}
where $h$ is the element size. 

As a side note, it is possible to derive a super-convergent method for \eqref{1D-steady} with a nodally exact solution \emph{even} when $\omega\ne0$. 
Such a method will require two stabilization parameters to independently modify, e.g., the physical diffusivity and oscillation frequency.
That is because the exact solution is a function of two dimensionless numbers, i.e., Peclet and Womersley. 
As detailed in \cite{esmaily2023stabilizedA}, that produces a method (ASU) with superior accuracy relative to the GLS, but also inferior stability at high Womersley numbers in multiple dimensions.
Therefore, we leverage the GLS framework to construct a stabilized method in what follows. 

In accordance with the findings in \cite{esmaily2023stabilizedA}, the GLS method yields accurate results when the Womersley number defined based on the element size $h$, i.e., $\beta = \frac{wh^2}{6\kappa}$, remains relatively small. 
For $\beta \gtrapprox 1$, the solution becomes susceptible to significant dispersive errors, which has been studied extensively in the context of Helmholtz equations when modeling short wave propagation \cite{babuska1997pollution, harari1990design, thompson1995galerkin}.

The relevance of this pollution effect for the target applications of the GLS method can be gauged by estimating the value of $\beta$ for a typical cardiorespiratory flow.  
Maintaining $\beta < 1$ for blood flow simulation where $\omega \approx 2\pi$ 1/s (equivalent to a heart rate of 60 beats per minute) and $\kappa \approx 4$ mm$^2$/s (blood kinematic viscosity) implies a maximum mesh size of approximately $h \approx$ 2 mm. 
Imposing a more stringent condition by considering the tenth mode (which will have little effect on the overall solution accuracy for its small amplitude) will necessitate $h \lessapprox 0.6$ mm.

For context, a coronary artery typically has a diameter of 3 to 4 mm. 
Thus, the stipulated upper limit on the mesh size holds true unless an excessively coarse mesh is employed. 
In other words, a mesh that is adequately refined to resolve the flow via a standard time-domain simulation should also be adequate if it were to be used for a simulation performed via the proposed time-spectral GLS method.

Although the GLS method provided in \eqref{1D-Sweak} is successful at preventing non-physical oscillations in the solution in convection-dominant regimes, it can not be readily applied to the coupled system in \eqref{1D-spec}. 
Thus, our strategy in stabilizing this coupled system is to first transform it to a form that is similar to \eqref{1D-steady} and then subject it to penalty terms similar to those appearing in \eqref{1D-Sweak} to stabilize the solution in the presence of strong convection. 
To do so, we begin by defining
\begin{equation}
\begin{split}
    \bl \phi(x) & \coloneqq \left[ \phi_{-N+1}, \cdots, \phi_0, \cdots, \phi_{N-1}\right]^T, \\
    \Omega_{\underline mn} & \coloneqq \hat i \underline m \omega \delta_{\underline m n}, \\
    A_{mn} & \coloneqq u_{m-n}, \hspace{1.3cm} |m-n|<N
\end{split}
\label{phi-def}
\end{equation} 
where $\delta_{mn}$ is the Kronecker delta function. 
Owing to the restriction $|m-n|<N$, the top left and bottom right entries of the convolution matrix $\bl A$ are zero. 
Thus, the calculation of the nonlinear convective term through the product of $\bl A$ with $\frac{d\bl \phi}{dx}$ below will no longer produce aliasing errors by generating frequencies outside of the captured spectrum to spill over and contaminate the lower frequencies. 
Thus, given the definition of $\bl A$, no anti-aliasing strategy is necessary in our implementation. 

Note that utilizing the convolution matrix to compute the convective term produces an $O(N^2)$ operation. 
This operation can be performed in $O(N\log(N))$ if one were to use the fast Fourier transformation instead to compute $\hat u$ and $\frac{d\hat \phi}{dx}$ and bring their product back to the frequency domain to compute the convective term. 
Following such a process will require the adoption of an anti-aliasing strategy -- which again is not necessary in our case owing to the definition of $\bl A$.
Note that the more expensive $O(N^2)$ procedure involving the convolution matrix is adopted here since the computation of $\bl A$ is necessary for the role it plays in the design of the GLS method below.

With the definitions in \eqref{phi-def}, \eqref{1D-spec} can be written as 
\begin{equation}
    \bl \Omega \bl \phi + \bl A \frac{d\bl \phi}{dx} = \kappa \frac{d\bl \phi}{dx^2}. 
    \label{1D-spec1}
\end{equation}

Note that the matrix $\bl A$ is Hermitian (i.e., $\bl A^H = \bl A$) and Toeplitz with only $N$ independent entries. Thus, through eigenvalue decomposition, it can be expressed as
\begin{equation}
    \bl A = \bl V \bl \Lambda \bl V^{H},
    \label{eigA}
\end{equation}
where $\bl \Lambda$ is a diagonal matrix containing eigenvalues of $\bl A$ and $\bl V$ is a matrix containing its eigenvectors (note $\bl V \bl V^H=\bl I$). 
From \eqref{eigA} and \eqref{1D-spec1}, we have
\begin{equation}
        \tilde {\bl \Omega} \tilde{\bl \phi} + \bl \Lambda \frac{d\tilde {\bl \phi}}{dx} = \kappa \frac{d^2\tilde {\bl \phi}}{dx^2},
        \label{1D-spec2}
\end{equation}
where 
\begin{equation}
\begin{split}
    \tilde {\bl \phi} & \coloneqq \bl V^{H} \bl \phi, \\
    \tilde {\bl \Omega} & \coloneqq \bl V^{H} \bl \Omega \bl V. 
    \end{split}
    \label{potDef}
\end{equation}

Owing to $\tilde {\bl \Omega}$, \eqref{1D-spec2} is not fully diagonalized. Nevertheless, we use this equation as the starting point to construct a stabilized scheme. This choice is justified by the fact that the stabilization parameter (c.f., $\tau$ in \eqref{1D-Stau}) is independent of the Eulerian acceleration term (i.e., $\hat i\omega \phi$). 
From a physical perspective, this independence arises since the stabilization terms become active in convection-dominant regimes when the convective acceleration (which is already diagonalized in \eqref{1D-spec2}) becomes more important than the Eulerian acceleration term. 

Given the transformed system in \eqref{1D-spec2} and its corresponding differential operator
\begin{equation*}
       \tilde \res (\tilde {\bl \phi}^h) \coloneqq \tilde {\bl \Omega} \tilde{\bl \phi}^h + \bl \Lambda \frac{d\tilde {\bl \phi}^h}{dx} - \kappa \frac{d^2\tilde {\bl \phi}^h}{dx^2},
\end{equation*}
the semi-discrete form of the GLS can be constructed in a fashion similar to \eqref{1D-Sweak}, yielding  
\begin{equation}
       \inner{\tilde {\bl w}^h, \tilde {\bl \Omega} \tilde{\bl \phi}^h + \bl \Lambda \frac{d\tilde {\bl \phi}^h}{dx} }_\Omega + \inner{\frac{d\tilde {\bl w}^h}{dx}, \kappa \frac{d\tilde {\bl \phi}^h}{dx}}_\Omega + \inner{\tilde \res(\tilde {\bl w}^h), \tilde {\bl \tau} \tilde \res(\tilde{\bl \phi}^h) }_\Ot = 0.
        \label{1D-weakT}
\end{equation}

$\tilde {\bl \tau}$ for the transformed problem in \eqref{1D-weakT} will be a rank $2N-1$ diagonal matrix, the entries of which are calculated based on the eigenvalues of $\bl A$ or entries of $\bl \Lambda$.
Those diagonal entries are computed using the same relationship as that of the steady flow problem in \eqref{1D-Stau}, resulting in
\begin{equation}
    \tilde \tau_{\underline n \underline n} = \left(\left(\frac{2\Lambda_{\underline n \underline n} }{h}\right)^2 + \left( \frac{12\kappa}{h^2} \right)^2\right)^{-\frac{1}{2}}.
\label{1D-tauT}
\end{equation}

Having a method in the transformed system, we can derive the final stabilized method by defining 
\begin{equation*}
   \tilde {\bl w}^h \coloneqq \bl V^H  \bl w^h,
\end{equation*}
which in combination with \eqref{eigA} and \eqref{potDef} permits us to write \eqref{1D-weakT} as
\begin{equation}
       \underbrace{\inner{\bl w^h, \bl \Omega \bl \phi^h + \bl A \frac{d\bl \phi^h}{dx} }_\Omega + \inner{\frac{d\bl w^h}{dx}, \kappa \frac{d\bl \phi^h}{dx}}_\Omega}_\text{Galerkin's terms} + \underbrace{\inner{\res(\bl w^h), \bl \tau \res(\bl \phi^h)}_\Ot}_\text{Least-square penalty terms} = 0,
        \label{1D-weak}
\end{equation}
where 
\begin{equation}
\begin{split}
    \res(\bl \phi^h) & \coloneqq \bl \Omega \bl \phi^h + \bl A \frac{d\bl \phi^h}{dx} - \kappa \frac{d^2\bl \phi^h}{dx^2}, \\ 
    \bl \tau & \coloneqq \bl V \tilde {\bl \tau} \bl V^H. 
\end{split}
\label{1D-tau1}
\end{equation}

Provided that \eqref{1D-tauT} is a well-behaved function with no singular points, it has a Taylor series expansion in terms of eigenvalues of $\bl A$. Hence, it can be combined with \eqref{1D-tau1} to arrive at 
\begin{equation}
\bl  \tau = \left(\left(\frac{2}{h}\right)^2\bl A^2 + \left( \frac{12\kappa}{h^2} \right)^2 \bl I\right)^{-\frac{1}{2}}.
\label{1D-tau}
\end{equation}

\paragraph{Remarks:} 
\begin{enumerate}
    \item The proposed stabilized method, defined by \eqref{1D-weak} and \eqref{1D-tau}, is a generalization of the earlier GLS method that is defined by \eqref{1D-Sweak} and \eqref{1D-Stau}. While the earlier method is only valid for steady flows, the present method applies to both steady and unsteady flows. It is fairly straightforward to arrive at the earlier method if one takes $\bl A = u\bl I$ and decouples the resulting system. 
    \item Computing $\bl \tau$ via \eqref{1D-tau} requires the solution of an eigenvalue problem. In practice, one will perform the eigenvalue decomposition in \eqref{eigA}, then compute eigenvalues of $\bl \tau$ via \eqref{1D-tauT} to compute $\bl \tau$ via \eqref{1D-tau1}. Since $\bl A$ is a Hermitian Toeplitz matrix, one can employ closed-form expressions for the solution of that eigenvalue problem \cite{noschese2013tridiagonal}. However, for larger values of $N$, one has to resort to numerical methods for this purpose. In our experience, the computation of eigenpairs will not increase the cost significantly as $N$ is relatively small. 
    \item As written, \eqref{1D-weak} results in $2N-1$ equations at each node. Since $\phi_{-n}^h = (\phi^h_n)^*$, one has to only solve for $N$ of those equations. This reduction, however, can not be applied to the eigenvalue problem in \eqref{eigA} as $\bl \Lambda$ may contain $2N-1$ independent eigenvalues.
    \item Since $\bl A$ is Hermitian, all of its eigenvalues are real. Thus, all the eigenvalues of $\bl \tau$ are positive in view of \eqref{1D-tauT}. That ensures $\bl \tau$ is a positive-definite matrix. As we will show in Section \ref{sec:analysis}, this property ensures the stability of the GLS method in strongly convective flows. 
    \item $\bl A$ is a Toeplitz Hermitian matrix and, thus, it can be stored using $N$ complex-valued entries. $\bl \tau$, on the other hand, is a centrosymmetric Hermitian matrix so that $\tau_{mn} = \tau_{nm}^*$ and $\tau_{mn} = \tau_{(2N-n)(2N-m)}$. Thus, $\bl \tau$ can be stored using $N^2$ independent complex-valued entries.  
\end{enumerate}

%%%%%%%%%%%%%%%%
\subsection{Convection-diffusion in multiple dimensions} \label{sec:AD}
The stabilized method derived for the 1D case in the previous section may seem readily applicable to higher dimensions. 
There are, however, two major challenges associated with this generalization that one must overcome to obtain a stabilization method in higher dimensions. 

To demonstrate those challenges, let us first consider the multidimensional form of \eqref{1D-spec1}, which is 
\begin{equation}
\begin{alignedat}{3}
    \bl \Omega \bl \phi + \bl A_i \frac{\partial \bl \phi}{\partial x_i} & = \frac{\partial}{\partial x_i} \left(\kappa \frac{\partial \bl \phi}{\partial x_i}\right),\hspace{0.5in} && \text{in}\;\;\; && \Omega \\ 
    \bl \phi &= \bl g,  && \text{on} && \Gg  \\
    \kappa \frac{\partial \bl \phi}{\partial x_i}n_i  &= \bl h, && \text{on} && \Gh 
\end{alignedat}
\label{3D-spec}
\end{equation}
where $\bl n$ is the outward normal vector to $\partial \Omega \coloneqq \Gamma = \Gh \bigcup \Gg$, $\Gh$ and $\Gg$ are the portion of the boundary where a Neumann and Dirichlet boundary condition is imposed, respectively, $\bl A_i\in \mathbb C^{(2N-1)\times(2N-1)}$ is the convolution matrix, representing the fluid velocity in direction $i$ with $i=$1, 2, and 3 in 3D. For the problem to be well-posed, we assume a Dirichlet boundary condition is imposed on some portion of the boundary, i.e., $\Gamma\ne \Gh$. We additionally assume constant $\kappa$ and a divergent-free flow such that 
\begin{equation*}
    \frac{\partial \bl A_i}{\partial x_i}=\bl 0.
\end{equation*}

To utilize the method described in Section \ref{sec:1D}, we should solve \eqref{eigA} to compute $2N-1$ eigenvalues, which later can be used for $\bl \tau$ calculations. 
This brings up the first challenge: there are three convective velocity matrices and it is not clear how they should be utilized to generate those eigenvalues. 
From a physical perspective, this challenge arises since we need to assign a streamwise direction for $\bl \tau$ calculation in multiple dimensions when velocity vectors associated with various modes are pointing in different directions. 

The second challenge, which is tangled with the first, is to define a rigorous and universal approach for computing element size $h$ in multiple dimensions. 
That is especially the case for arbitrary unstructured grids, including tetrahedral elements that are widely adopted for complex geometries encountered in cardiorespiratory simulations. 

The insight to overcome these challenges has come from the earlier design of stabilized methods for a different application area, namely compressible flows \cite{hughes1986generalized,shakib1991new}.
To explain the idea, consider the form of $\bl \tau$ in the 1D case in \eqref{1D-tau}, which explicitly depends on the element size $h$. 
The $h$-dependent value of $\bl \tau$ is obtained when the derivatives in the convective and diffusive terms in \eqref{1D-spec1} are expressed with regard to the physical coordinate system. 
To bake in the dependence of $\bl \tau$ on the element size (and hence its configuration), one can instead express these terms in the element parent coordinate system $\xi$ rather than the physical coordinate system $x$. 
That results in 
\begin{equation}
    \bl \Omega \bl \phi + \bl A \frac{\partial \xi}{\partial x}\frac{\partial \bl \phi}{\partial \xi} = \kappa \left(\frac{\partial \xi}{\partial x}\right)^2 \frac{\partial^2 \bl \phi}{\partial \xi^2}. 
    \label{1D-spec3}
\end{equation}

Designing a stabilization parameter $\bl \tau$ for \eqref{1D-spec3} is straightforward once we recognize its convective velocity and diffusivity are $\bl A \frac{\partial \xi}{\partial x}$ and $\kappa \left(\frac{\partial \xi}{\partial x}\right)^2$, respectively. 
Since the piecewise linear shape functions in the parent coordinate system have an element size of 2 (assuming $-1 \le \xi \le 1$), \eqref{1D-tau} for \eqref{1D-spec3} can be written as 
\begin{equation}
    \bl \tau = \left(\left(\frac{\partial \xi}{\partial x}\right)^2\bl A^2 + 9\kappa^2 \left(\frac{\partial \xi}{\partial x}\right)^4\bl I\right)^{-\frac{1}{2}}.
\label{1D-tau1p}
\end{equation}
Since \eqref{1D-spec3} is an alternative form of \eqref{1D-spec1} and $\bl \tau$ remains unchanged when deriving a semi-discrete form for these two equations, one can compute $\bl \tau$ using \eqref{1D-tau1p} rather than \eqref{1D-tau}.

Using \eqref{1D-tau1p} as the starting point significantly simplifies the construction of $\bl \tau$ in multiple dimensions for \eqref{3D-spec}. 
Rewriting the derivatives in \eqref{3D-spec} in the parent coordinate system results in
\begin{equation*}
    \bl \Omega \bl \phi + \bl A_i \frac{\partial \xi_j}{\partial x_i} \frac{\partial \bl \phi}{\partial \xi_j} = \kappa \frac{\partial \xi_j}{\partial x_i} \frac{\partial \xi_k}{\partial x_i} \frac{\partial^2 \bl \phi}{\partial \xi_j\partial \xi_k}.
\end{equation*}
Therefore, the generalization of \eqref{1D-tau1p} to multiple dimensions is 
\begin{equation}
    \bl \tau = \left( \bl A_i \frac{\partial \xi_j}{\partial x_i}  \frac{\partial \xi_j}{\partial x_k} \bl A_k + C_{\rm I} \kappa^2 \frac{\partial \xi_j}{\partial x_i} \frac{\partial \xi_k}{\partial x_i} \frac{\partial \xi_j}{\partial x_l} \frac{\partial \xi_k}{\partial x_l} \bl I \right)^{-1/2}, 
    \label{3D-tau1}
\end{equation}
where the coefficient of 9 in \eqref{1D-tau1p} is replaced by the element-type-dependent $C_{\rm I}$ since the element size in the parent coordinate for a given direction may not be 2. 
This coefficient is set to $C_{\rm I} =3$ for simulations performed using tetrahedral linear elements in the following sections. 
As discussed in detail in Section \ref{sec:analysis}, a bound on error can be established if there is no backflow on Neumann boundaries and   
\begin{equation}
    \frac{4C_{\rm inv}^4}{C_\square} \le C_{\rm I}, 
    \label{ci_cinv}
\end{equation}
where $C_{\rm inv}$ is a constant from the inverse inequality
\begin{equation}
   \left\|\frac{\partial^2\bl w^h}{\partial x_i\partial x_i}\right\|_{\Omega_e} \le \frac{C_{\rm inv}}{h} \left\|\frac{\partial \bl w^h}{\partial x_i}\right\|_{\Omega_e}. 
    \label{inv_ineq}
\end{equation}
In this equation, $h$ is the size of the largest element, i.e.,
\begin{equation*}
    h = \max_e(h_e),
\end{equation*}
where $h_e$ is the diameter of the sphere containing $\Omega_e$ in 3D. 
$C_\square$ in \eqref{ci_cinv} is a constant that depends on the definition of shape functions in the parent coordinate only (e.g., $C_\square = 128$ for quadrilateral element if $-1\le \xi_i \le 1$). It is selected to satisfy
\begin{equation}
C_\square \le h^4 G_{ij}G_{ij} \le C_\gamma,
\label{GH_rel}
\end{equation}
where
\begin{equation}
    G_{ij} \coloneqq \frac{\partial \xi_k}{\partial x_i} \frac{\partial \xi_k}{\partial x_j},
    \label{metric}
\end{equation}
is the metric tensor. 
Note that in contrast to $C_\square$, $C_\gamma$ depends on the element shape and will be large if elements have a large aspect ratio. 
For $C_\gamma$ to be independent of $h$, we assume if the mesh is refined, it is done in a way that its quality is preserved (or the element aspect ratio remains bounded).

The lower bound imposed on $C_{\rm I}$ via \eqref{ci_cinv} limits the amount of artificial (or numerical) diffusion introduced via the stabilization term.
This limit, however, is only applied to nonlinear interpolation functions given that for linear shape functions \eqref{inv_ineq} holds true with $C_{\rm inv} = 0$, thus producing the trivial condition $C_{\rm I} \ge 0$. 
This observation is compatible with the general trend that $\bl \tau$ should become smaller with increasing polynomial degree \cite{galeao2004finite}.

Before proceeding further, it is crucial to address the universality of \eqref{3D-tau1} concerning the element type. Since this equation originates from \eqref{1D-Stau} for linear elements in 1D, it can be demonstrated that this design is directly applicable to quadrilateral elements in 2D and hexahedral elements in 3D. The extension to other element types is also facilitated by the aforementioned transformation.

To illustrate this, one can examine the conventional time simulations and convection-dominant flows. The conventional $\tau$ has proven successful for flow simulations across various element types. The same holds true for the proposed $\bl \tau$, given that it simplifies to the conventional $\tau$ in steady flows. Additionally, in convection-dominant flows, the term containing $C_{\rm I}$ in \eqref{3D-tau1} becomes negligible at the asymptotic limit. This results in a $\bl \tau$ design that is entirely independent of the element type. This behavior is particularly significant since the solution strongly relies on the design of $\bl \tau$ in convection-dominant flows. This shows that the proposed form becomes universal when its design matters the most. For a more formal analysis of the behavior of this formulation using arbitrary elements, the readers may refer to Section \ref{sec:analysis}.

To arrive at the final form of the GLS method, we can simplify \eqref{3D-tau1} using \eqref{metric} to obtain 
\begin{equation}
    \bl \tau = \left[ \bl A_i G_{ij} \bl A_j + C_{\rm I} \kappa^2 G_{ij}G_{ij} \bl I \right]^{-1/2}. 
    \label{3D-tau}
\end{equation}
The numerical calculation of $\bl \tau$ from \eqref{3D-tau}, which requires computing the root square of a matrix, is identical to the process mentioned earlier for \eqref{1D-tau}. 
It is done by computing the matrix under the root square in \eqref{3D-tau}, then calculating its eigenpairs to apply the operation of inverse root square to its eigenvalues before multiplying the result by the eigenvectors to get $\bl \tau$.
Having $\bl \tau$, the GLS stabilized formulation of the 3D convection-diffusion in the time-spectral domain becomes very similar to its 1D counterpart. 
It can be formally stated as finding $\bl \phi^h \in \bl V^h$ such that for any $\bl w^h \in \bl W^h$
\begin{equation}
       b(\bl w^h,\bl \phi^h) = \inner{\bl w^h,\bl h}_\Gh,
        \label{3D-weak}
\end{equation}
where
\begin{equation}
\begin{split}
       b(\bl w^h,\bl \phi^h) & \coloneqq \inner{\bl w^h, \bl \Omega \bl \phi^h + \bl A_i \frac{\partial \bl \phi^h}{\partial x_i}}_\Omega + \inner{ \frac{\partial \bl w^h}{\partial x_i}, \kappa \frac{\partial \bl \phi^h}{\partial x_i}}_\Omega + \inner{\res(\bl w^h), \bl \tau \res(\bl \phi^h)}_\Ot, \\
    \res(\bl \phi^h) & \coloneqq \bl \Omega \bl \phi^h + \bl A_i \frac{\partial \bl \phi^h}{\partial x_i} - \kappa \frac{\partial^2 \bl \phi^h}{\partial x_i \partial x_i}.
    \end{split}
        \label{3D-weak-def}
\end{equation}
Here $\bl V^h$ and $\bl W^h$ are the finite element spaces containing all possible $\bl \phi^h$ and $\bl w^h$ subjected to the constraint $\bl \phi^h=\bl g$ and $\bl w^h=\bl 0$ on $\Gg$, respectively. 
That concludes the construction of the GLS stabilized method for the convection-diffusion equation for the general case in which flow may be unsteady, the number of spatial dimensions may be larger than one, and the elements may be arbitrary. 

%%%%%%%%%%%%%%%%%%%%%%%%%%%%%%%%%%%%%%%%%
\subsection{Properties of the GLS method} \label{sec:analysis}
Our ultimate goal in this section is to establish the accuracy of the GLS method described in the previous section by providing an upper bound on the error in terms of the mesh size $h$. 
This condition, which will be established later via Theorem \ref{thm:error}, relies on a set of preliminary results that are presented here via Lemma \ref{lem:stability}-\ref{lem:interp}. 
The practical interpretation of these results will be provided in the form of a set of remarks at the end of this section. 

The properties of stabilized methods, including the GLS have been studied in detail in the past. 
For an overview of the concepts, the readers may refer to~\cite{brezzi1990discourse,franca1991error,franca1993convergence}. 
An analysis of stabilized methods for advection-diffusion-reaction equations has been performed in \cite{hughes1989new,knobloch2011stability} for a single equation and in~\cite{shakib1991new} for a system of equations.
Provided that the GLS method discussed here involves an imaginary source term, the arguments found in those studies are slightly modified to prove an error estimate for the method under consideration.

Throughout the following derivation, we use the following definitions:
\begin{equation*}
    \norm{\bl f}_{\bl M,S}^2 \coloneqq \inner{\bl f, \bl M \bl f}_S = \int_S \bl f^H \bl M \bl f dS
\end{equation*}
is the $L_2$ norm with respect to metric $\bl M$ in domain $S$. 
We drop $\bl M$ from this notation if $\bl M=\bl I$.
Furthermore, the smallest and largest eigenvalue of symmetric matrix $\bl M$ are denoted by $\lambda_{\min}(\bl M)$ and $\lambda_{\max}(\bl M)$, respectively. Namely
\begin{alignat*}{2}
    \lambda_{\max}(\bl M) & \coloneqq \sup_{\bl v} \frac{\bl v^H \bl M \bl v}{\bl v^H\bl v}, \\
    \lambda_{\min}(\bl M) & \coloneqq \inf_{\bl v} \frac{\bl v^H \bl M \bl v}{\bl v^H\bl v}. 
\end{alignat*}
The local and global element Peclet numbers are defined as  
\begin{equation}
\begin{split}
    \alpha_e & \coloneqq \left[\frac{\lambda_{\max}(\bl A_i G_{ij} \bl A_j)}{C_{\rm I} \kappa^2 G_{ij} G_{ij}}\right]^{1/2}, \\
    \alpha & \coloneqq  \max_{\bl x\in \Omega} (\alpha_e), 
\end{split}
    \label{peclet_def}
\end{equation}
respectively. 
Similarly, the element Womersley number is defined as 
\begin{equation}
    \beta \coloneqq h\sqrt\frac{(N-1)\omega}{\kappa}.
    \label{womersley_def}
\end{equation}
Lastly, $c$, $c_1$, $c_2$, and $c_3$ constants appearing in the following derivation are all taken to be positive and all vector variables (e.g., $\bl w$) are symmetric with regard to the imaginary half-plane as shown in \eqref{phi-def} and sufficiently regular to be in $H^1(\Omega)^{2N-1} \cap H^2(\Ot)^{2N-1}$.

%-----------------
\begin{lem} \label{lem:stability}
(Stability). If  
\begin{equation*}
    \An \coloneqq \bl A_i n_i
\end{equation*}
is positive semi-definite on $\Gh$ (no backflow), then for a given $\bl w$
\begin{equation*}
       b(\bl w,\bl w) \coloneqq \dnorm{\bl w}^2 = \frac{1}{2}\snorm{\bl w}_{\An,\Gh}^2 +  \kappa \norm{\frac{\partial \bl w}{\partial x_i}}^2_\Omega + \norm{\res(\bl w)}^2_{\bl \tau,\Ot}.
\end{equation*}
\end{lem}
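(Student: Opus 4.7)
The plan is to verify the identity by expanding $b(\bl w,\bl w)$ from \eqref{3D-weak-def} and showing that, among its four contributions, the Eulerian acceleration term vanishes while the convective term collapses to the advertised boundary contribution. The diffusive and least-squares pieces are already in the claimed form. A non-negativity argument would then establish that $\dnorm{\cdot}$ is indeed a semi-norm.

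For the Eulerian acceleration, I would exploit that $\bl \Omega$ is diagonal with purely imaginary entries $\hat i m\omega$, so $\bl w^H\bl \Omega \bl w = \hat i\omega \sum_m m \snorm{w_m}^2$. The Hermitian symmetry $w_{-m} = w_m^*$ imposed in the preamble on every admissible $\bl w$ makes this $m$-weighted sum antisymmetric in $m$ and therefore zero, so $(\bl w,\bl \Omega \bl w)_\Omega = 0$.

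For the convective term, I would invoke Hermiticity of $\bl A_i$ (Section \ref{sec:1D}) together with $\partial_i \bl A_i = \bl 0$ and the divergence theorem to obtain
\[
(\bl w, \bl A_i \partial_i \bl w)_\Omega + \overline{(\bl w, \bl A_i \partial_i \bl w)_\Omega} = (\bl w, \An \bl w)_\Gamma = \snorm{\bl w}_{\An,\Gh}^2,
\]
where the last equality uses $\bl w = \bl 0$ on $\Gg$. The Hermitian symmetry of $\bl w$, combined with the reality of $\hat u_i$, forces $(\bl w, \bl A_i \partial_i \bl w)_\Omega$ itself to be real — via Plancherel the form equals $\tfrac{1}{T}\int_\Omega \int_0^T \hat w\, \hat u_i\, \partial_i \hat w\, dt\, d\Omega$, an integral of real quantities. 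Therefore $(\bl w, \bl A_i \partial_i \bl w)_\Omega = \tfrac{1}{2}\snorm{\bl w}_{\An,\Gh}^2$. Reassembling the four contributions yields the stated identity, since the diffusive and stabilization terms are $\kappa\norm{\partial_i \bl w}_\Omega^2$ and $\norm{\res(\bl w)}_{\bl \tau,\Ot}^2$ by definition of the weighted norms. Non-negativity, and hence the semi-norm status of $\dnorm{\cdot}$, follows from three observations: $\snorm{\bl w}_{\An,\Gh}^2 \ge 0$ under the no-backflow hypothesis on $\An$; $\kappa > 0$; and $\bl \tau$ positive definite by Remark 4 of Section \ref{sec:1D}.

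The principal obstacle is the bookkeeping that reconciles an \emph{a priori} complex-valued bilinear form with the manifestly real expression on the right. Concretely, one must pin down that the Hermitian symmetry of admissible $\bl w$ is exactly what kills the imaginary $\bl \Omega$-contribution and forces the convective form to be real; without this ingredient, integration by parts only delivers $2\,\Re[\cdot]$ on the left, and the factor $\tfrac{1}{2}$ in the statement would require a separate justification.
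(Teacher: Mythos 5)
Your proof is correct and follows essentially the same route as the paper's: expand $b(\bl w,\bl w)$, kill the $\bl\Omega$ term using the Hermitian mode symmetry of $\bl w$ together with $\bl\Omega^H=-\bl\Omega$, and halve the convective term by integration by parts plus Hermiticity of $\bl A_i$. The only difference is that you spell out explicitly (via the mode sum and the Plancherel identity) why the relevant quadratic forms are real, a step the paper asserts more tersely as ``by symmetry''; your remark that this realness is what produces the factor $\tfrac{1}{2}$ is exactly the point the paper's proof relies on.
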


\begin{proof}
By definition \eqref{3D-weak-def}
\begin{equation}
       b(\bl w,\bl w) = \inner{\bl w, \bl \Omega \bl w}_\Omega + \inner{\bl w, \bl A_i \frac{\partial \bl w}{\partial x_i}}_\Omega + \kappa \norm{\frac{\partial \bl w}{\partial x_i}}_\Omega^2 + \norm{\res(\bl w)}_{\bl \tau,\Ot}^2.
       \label{lem1_1}
\end{equation}
Note that by symmetry, all terms produced by the sesquilinear $b(\bl w,\bl w)$,  which is conjugate transposed with regard to the first argument, are real. Hence
\begin{alignat}{2}
        \inner{\bl w,\bl \Omega \bl w}_\Omega &= \int_\Omega \bl w^H \bl \Omega \bl w d\Omega && (\text{by definition \eqref{inner_def}}) \nonumber \\ 
        & = \int_\Omega \left( \bl w^H \bl \Omega \bl w \right)^H d\Omega  \hspace{1.1in} && (\text{by symmetry}) \nonumber \\
        & = -\int_\Omega \bl w^H \bl \Omega \bl w d\Omega && \nonumber (\text{since } \bl \Omega^H=-\bl \Omega^H)\\ 
        & = 0.
    \label{lem1_2}
\end{alignat}
Also
\begin{alignat}{2}
        \inner{\bl w,\bl A_i \frac{\partial \bl w}{\partial x_i}}_\Omega & = \inner{\bl w,\bl A_in_i \bl w }_\Gh - \inner{\frac{\partial \bl w}{\partial x_i},\bl A_i \bl w }_\Omega \hspace{0.5in} && (\text{by integration-by-parts}) \nonumber \\
        & = \frac{1}{2}\inner{\bl w,\An \bl w }_\Gh &&  \nonumber \\ 
        & = \frac{1}{2}\snorm{\bl w}^2_{\An,\Gh}, && (\An \ge 0) 
    \label{lem1_3}
\end{alignat}
which is written in terms of a seminorm since $\An$ may be zero on $\Gh$ or $\Gamma = \Gg$.  
The intended result directly follows from \eqref{lem1_1}, \eqref{lem1_2}, and \eqref{lem1_3}.  
From the diffusive term contribution, one can readily conclude that $\dnorm{\bl w^h}$ is positive-definite for all $\bl w^h \in \bl W^h$ (note $\bl w^h$ can not be a nonzero constant since $\Gamma \ne \Gh$).
Thus, in view of this lemma, $b$ is obviously coercive with respect to $\dnorm{\cdot}$.
\end{proof}

%-----------------------
\begin{lem}\label{lem:consistency}
(Consistency). Let $\bl \phi$ be the solution to \eqref{3D-spec}. Then for all $\bl w^h \in \bl W^h$
\begin{equation*}
       b(\bl w^h,\bl \phi) = \inner{\bl w^h,\bl h}_\Gh. 
\end{equation*}
\end{lem}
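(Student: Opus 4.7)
The plan is to exploit the fact that the exact solution $\bl \phi$ satisfies the strong form \eqref{3D-spec} pointwise (given sufficient regularity), so the residual $\res(\bl \phi)$ vanishes on every element interior, and then reduce the Galerkin contributions to the data on $\Gh$ via integration by parts. Concretely, I would start by substituting $\bl \phi$ into the definition of $b(\bl w^h,\bl \phi)$ in \eqref{3D-weak-def} and observe that the strong-form PDE in \eqref{3D-spec} immediately gives
\begin{equation*}
   \res(\bl \phi) = \bl \Omega \bl \phi + \bl A_i \frac{\partial \bl \phi}{\partial x_i} - \kappa \frac{\partial^2 \bl \phi}{\partial x_i \partial x_i} = \bl 0 \quad \text{in each } \Omega_e,
\end{equation*}
so the least-squares penalty term $\inner{\res(\bl w^h),\bl \tau\,\res(\bl \phi)}_\Ot$ drops out entirely.

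Next I would apply integration by parts to the diffusive Galerkin term, keeping in mind that the inner product \eqref{inner_def} is sesquilinear but the identities used here involve no complex conjugation of derivatives in $x_i$:
\begin{equation*}
   \inner{\frac{\partial \bl w^h}{\partial x_i}, \kappa \frac{\partial \bl \phi}{\partial x_i}}_\Omega
   = -\inner{\bl w^h, \kappa \frac{\partial^2 \bl \phi}{\partial x_i \partial x_i}}_\Omega
   + \inner{\bl w^h, \kappa \frac{\partial \bl \phi}{\partial x_i} n_i}_\Gamma.
\end{equation*}
Because $\bl w^h \in \bl W^h$ vanishes on $\Gg$, the boundary integral collapses to one on $\Gh$, where the Neumann condition in \eqref{3D-spec} supplies $\kappa (\partial \bl \phi/\partial x_i)\, n_i = \bl h$.

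Combining these two observations with the convective and Eulerian terms gives
\begin{equation*}
   b(\bl w^h,\bl \phi) = \inner{\bl w^h, \bl \Omega \bl \phi + \bl A_i \frac{\partial \bl \phi}{\partial x_i} - \kappa \frac{\partial^2 \bl \phi}{\partial x_i \partial x_i}}_\Omega + \inner{\bl w^h, \bl h}_\Gh,
\end{equation*}
and the volume term is $\inner{\bl w^h, \res(\bl \phi)}_\Omega = 0$, yielding the claim.

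There is no serious obstacle here; the only point that requires care is the regularity assumption so that $\partial^2 \bl \phi/\partial x_i \partial x_i$ is well-defined element-by-element and the boundary trace makes sense. This is consistent with the regularity convention stated just before Lemma \ref{lem:stability}, which places test/trial functions in $H^1(\Omega)^{2N-1} \cap H^2(\Ot)^{2N-1}$, so that integration by parts on each element and summation across element boundaries introduce no interior jump contributions (the normal fluxes of the exact solution are continuous across interelement boundaries).
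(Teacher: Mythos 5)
Your proof is correct and is exactly the standard consistency argument that the paper compresses into ``immediate from \eqref{3D-spec} and \eqref{3D-weak-def}'': the exact solution annihilates the least-squares residual term, and integration by parts on the diffusive Galerkin term together with $\bl w^h=\bl 0$ on $\Gg$ and the Neumann condition on $\Gh$ reduces the rest to $\inner{\bl w^h,\bl h}_\Gh$. The regularity remark about element-wise integration by parts and continuity of the normal flux is a sensible addition but does not change the substance.
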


\begin{proof}
    This is immediate from \eqref{3D-spec} and \eqref{3D-weak-def}. 
\end{proof}

%-----------------------
\begin{lem}\label{lem:tau_lb}
($\bl \tau$ bounded from below). If \eqref{GH_rel} holds, then for a given $\bl w$
\begin{equation*}
       \norm{\bl w}_{\bl \tau^{-1},\Omega}^2 \le c \kappa h^{-2}(1+\alpha) \norm{\bl w}^2_{\Omega},
\end{equation*}
where $c^2 = C_{\rm I}C_\gamma$.
\end{lem}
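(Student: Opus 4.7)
The plan is to reduce the weighted norm inequality to a pointwise bound on $\lambda_{\max}(\bl \tau^{-1})$, then control that eigenvalue using the definition of $\bl\tau$ in \eqref{3D-tau} together with the Peclet number \eqref{peclet_def} and the metric-tensor bound \eqref{GH_rel}.

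First, since $\bl\tau^{-1}$ is Hermitian positive-definite, the integrand can be pointwise bounded by the Rayleigh quotient:
\begin{equation*}
\bl w^H \bl\tau^{-1} \bl w \le \lambda_{\max}(\bl\tau^{-1})\, \bl w^H \bl w.
\end{equation*}
So after integrating over $\Omega$, it suffices to show $\lambda_{\max}(\bl\tau^{-1}) \le c\kappa h^{-2}(1+\alpha)$ pointwise in $\Omega$.

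Second, from \eqref{3D-tau} we have $\bl\tau^{-2} = \bl A_i G_{ij} \bl A_j + C_{\rm I}\kappa^2 G_{ij}G_{ij}\bl I$. Both summands are Hermitian (the first because $\bl A_i$ is Hermitian and $G_{ij}$ is real symmetric, the second trivially), so by subadditivity of the largest eigenvalue,
\begin{equation*}
\lambda_{\max}(\bl\tau^{-2}) \le \lambda_{\max}(\bl A_i G_{ij}\bl A_j) + C_{\rm I}\kappa^2 G_{ij}G_{ij}.
\end{equation*}
Using the local Peclet definition \eqref{peclet_def}, the first term equals $\alpha_e^2 C_{\rm I}\kappa^2 G_{ij}G_{ij}$, so
\begin{equation*}
\lambda_{\max}(\bl\tau^{-2}) \le C_{\rm I}\kappa^2 G_{ij}G_{ij}(1+\alpha_e^2).
\end{equation*}
Taking the square root (noting $\lambda_{\max}(\bl\tau^{-1}) = \lambda_{\max}(\bl\tau^{-2})^{1/2}$ since $\bl\tau^{-2}$ is positive-definite) and using the elementary estimate $\sqrt{1+\alpha_e^2} \le 1+\alpha_e$,
\begin{equation*}
\lambda_{\max}(\bl\tau^{-1}) \le \sqrt{C_{\rm I}}\,\kappa\,\sqrt{G_{ij}G_{ij}}\,(1+\alpha_e).
\end{equation*}

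Third, invoking the upper half of \eqref{GH_rel}, namely $G_{ij}G_{ij} \le C_\gamma h^{-4}$, gives $\sqrt{G_{ij}G_{ij}} \le \sqrt{C_\gamma}\,h^{-2}$, and since $\alpha_e \le \alpha$ by the definition of the global Peclet number,
\begin{equation*}
\lambda_{\max}(\bl\tau^{-1}) \le \sqrt{C_{\rm I}C_\gamma}\,\kappa h^{-2}(1+\alpha) = c\kappa h^{-2}(1+\alpha).
\end{equation*}
Combining this pointwise bound with the Rayleigh-quotient step and integrating over $\Omega$ yields the claim. No real obstacle stands in the way; the only subtlety is to remember that $\bl A_i G_{ij}\bl A_j$ is Hermitian (so eigenvalue subadditivity applies) and that squaring and square-rooting commute for positive-definite Hermitian matrices, which justifies passing from a bound on $\bl\tau^{-2}$ to one on $\bl\tau^{-1}$.
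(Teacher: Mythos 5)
Your proof is correct and follows essentially the same route as the paper: both reduce the weighted norm to a pointwise spectral bound on $\bl\tau^{-1}$, use the definition of $\alpha_e$ in \eqref{peclet_def} to absorb $\lambda_{\max}(\bl A_iG_{ij}\bl A_j)$ into $\alpha_e^2 C_{\rm I}\kappa^2 G_{ij}G_{ij}$, apply the upper bound in \eqref{GH_rel}, and finish with $\sqrt{1+\alpha^2}\le 1+\alpha$. The only cosmetic difference is that you make the eigenvalue subadditivity and the commuting of squaring with square roots explicit, where the paper works directly with matrix inequalities inside the inner product.
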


\begin{proof}
\begin{alignat*}{2}
       \norm{\bl w}^2_{\bl \tau^{-1},\Omega} & = \inner{\bl w, (\bl A_iG_{ij}\bl A_j + C_{\rm I} \kappa^2 G_{ij}G_{ij}\bl I)^{1/2}\bl w}_\Omega && (\text{by definition \eqref{3D-tau}})\\ 
       & \le \inner{\bl w, ((\alpha_e^2+1) C_{\rm I} \kappa^2 G_{ij}G_{ij})^{1/2}\bl w}_\Omega && (\text{by definition \eqref{peclet_def}})  \\ 
       &\le \inner{\bl w,(C_{\rm I} C_\gamma)^{1/2} \kappa h^{-2}(1+\alpha_e^2)^{1/2} \bl w}_\Omega \hspace{1in} && ({\rm by\; \eqref{GH_rel}}) \\ 
       &\le c \kappa h^{-2}(1+\alpha^2)^{1/2} \norm{\bl w}_\Omega^2 && (\text{by definition \eqref{peclet_def}}) \\ 
       &\le c \kappa h^{-2}(1+\alpha) \norm{\bl w}_\Omega^2  && (\text{since } \alpha \ge 0).
\end{alignat*}
\end{proof}

%----------------------
\begin{lem} \label{lem:tau-uba}
($\bl \tau$ bounded from above). If \eqref{GH_rel} holds, then for a given $\bl w$
\begin{equation*}
       \norm{\bl A_i \frac{\partial \bl w}{\partial x_i}}_{\bl \tau,\Omega}^2 \le c \kappa \alpha \norm{\frac{\partial \bl w}{\partial x_i}}^2_{\Omega},
\end{equation*}
where $c^2 = C_{\rm I}C_\gamma d/C_{\square}$ and $d$ is the number of spatial dimensions.
\end{lem}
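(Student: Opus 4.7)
The plan is to prove the inequality pointwise in $\bl x$ and then integrate. At a fixed point, the claim reduces to bounding the quadratic form $Q(\bl v) \coloneqq (\bl A_i \bl v_i)^H \bl \tau (\bl A_j \bl v_j)$ by $c\,\kappa \alpha_e \sum_i |\bl v_i|^2$, where $\alpha_e$ is the local Peclet number. The central reformulation is a sum-of-squares factorization of $\bl T \coloneqq \bl A_i G_{ij} \bl A_j$. Writing $G^{1/2}$ for the symmetric square root of the metric tensor, I would set $\bl B_k \coloneqq (G^{1/2})_{ki} \bl A_i$; each $\bl B_k$ is Hermitian, $\bl T = \sum_k \bl B_k^2$, and introducing $\bl V_k \coloneqq (G^{-1/2})_{ki} \bl v_i$ gives both $\bl A_i \bl v_i = \bl B_k \bl V_k$ and the identity $\sum_k |\bl V_k|^2 = \bl v_i^H (G^{-1})_{ij} \bl v_j$.

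Next I would view $\bl R : (\bl V_k)_{k=1}^d \mapsto \bl \tau^{1/2} \bl B_k \bl V_k$ as a linear operator from $(\mathbb C^{2N-1})^d$ to $\mathbb C^{2N-1}$. A one-line check gives $\bl R \bl R^H = \bl \tau^{1/2}\bigl(\sum_k \bl B_k^2\bigr)\bl \tau^{1/2} = \bl \tau^{1/2} \bl T \bl \tau^{1/2}$, and the elementary scalar inequality $b/\sqrt{b + c_0} \le \sqrt{b}$ (valid for $b, c_0 \ge 0$ and immediate after squaring), applied eigenvalue-by-eigenvalue with $c_0 = C_{\rm I} \kappa^2 G_{ij} G_{ij}$, delivers the operator-norm bound
\begin{equation*}
\lambda_{\max}(\bl R \bl R^H) \;=\; \lambda_{\max}(\bl \tau^{1/2} \bl T \bl \tau^{1/2}) \;=\; \max_n \frac{b_n}{\sqrt{b_n + c_0}} \;\le\; \sqrt{\lambda_{\max}(\bl T)},
\end{equation*}
with $b_n$ the eigenvalues of $\bl T$. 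Combining $Q(\bl v) = |\bl R \bl V|^2 \le \lambda_{\max}(\bl R \bl R^H) \sum_k |\bl V_k|^2$ with the identity for $\sum_k|\bl V_k|^2$ above then yields $Q(\bl v) \le \sqrt{\lambda_{\max}(\bl T)}\, \bl v_i^H (G^{-1})_{ij} \bl v_j$.

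To reach the stated constant, I would substitute $\sqrt{\lambda_{\max}(\bl T)} = \alpha_e \kappa \sqrt{C_{\rm I} G_{ij} G_{ij}}$ from the definition of the Peclet number and use $\bl v_i^H (G^{-1})_{ij} \bl v_j \le \lambda_{\min}(G)^{-1} \sum_i |\bl v_i|^2$. The factor $c^2 = C_{\rm I} C_\gamma d/C_\square$ then emerges from the inequality $G_{ij}G_{ij} / \lambda_{\min}(G)^2 \le d\, C_\gamma/C_\square$, which in turn follows from the upper bound $G_{ij} G_{ij} \le C_\gamma/h^4$ in \eqref{GH_rel} together with the complementary lower bound $\lambda_{\min}(G) \ge \sqrt{C_\square/d}\,h^{-2}$. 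Integrating pointwise over $\Omega$ and replacing $\alpha_e$ by the global $\alpha$ completes the argument.

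The main obstacle I anticipate is the last conversion rather than the spectral work. Once the sum-of-squares factorization is adopted, everything through $\lambda_{\max}(\bl R \bl R^H) \le \sqrt{\lambda_{\max}(\bl T)}$ is essentially mechanical. Closing with the exact constant $\sqrt{C_\gamma d/C_\square}$ is delicate: the inequality $\|G\|_F \le \sqrt{d}\,\lambda_{\min}(G)$ can fail badly for anisotropic $G$, so \eqref{GH_rel} alone is not sufficient. I would therefore invoke the paper's standing assumption that mesh refinement preserves element quality (bounded aspect ratio), which controls the conditioning of $G$ and supplies the missing lower bound on $\lambda_{\min}(G)$.
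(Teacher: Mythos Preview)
Your argument is correct and lands on the same two ingredients the paper uses: the commutation of $\bl T=\bl A_iG_{ij}\bl A_j$ with $\bl\tau=(\bl T+c_0\bl I)^{-1/2}$ to obtain the eigenvalue bound $b_n/\sqrt{b_n+c_0}\le\sqrt{b_n}$, and the conversion back to $\norm{\partial\bl w/\partial x_i}^2$ via $\lambda_{\min}(\bl G)^{-1}$. The packaging differs. The paper switches to parent coordinates $\xi$, invokes a Cauchy--Schwarz step to pass from $\bigl(\partial_{\xi_k}\bl w,\,C_k^HC_l\,\partial_{\xi_l}\bl w\bigr)$ to $\bigl(\partial_{\xi_k}\bl w,\,\sum_lC_l^HC_l\,\partial_{\xi_k}\bl w\bigr)$, and then recognizes $\sum_lC_l^HC_l$ as $\bl A_iG_{ij}\bl A_j\bl\tau$. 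Your sum-of-squares factorization $\bl T=\sum_k\bl B_k^2$ with $\bl B_k=(G^{1/2})_{ki}\bl A_i$ and the operator-norm identity $\bl R\bl R^H=\bl\tau^{1/2}\bl T\bl\tau^{1/2}$ is a cleaner and more transparently rigorous route to the same spectral estimate; it avoids the coordinate change altogether and makes the role of $G^{1/2}$ explicit.

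Your closing worry is well placed and is exactly how the paper proceeds: it does not derive $\lambda_{\min}(\bl G)\ge\sqrt{C_\square/d}\,h^{-2}$ from \eqref{GH_rel} but rather \emph{strengthens} the choice of $C_\square$ at the outset of the proof (its preliminary inequality \eqref{frob_norm}, $C_\square\le h^4d\,\lambda_{\min}^2(\bl G)$), so that this lower bound holds by fiat. With that reinterpretation of $C_\square$ your constant $c^2=C_{\rm I}C_\gamma d/C_\square$ drops out exactly as in the paper.
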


\begin{proof}
First, we need to establish a preliminary result. Note  
\begin{equation*}
    G_{ij}G_{ij} = \norm{\bl G}_{\rm F}^2 = \sum_{i=1}^d \lambda_i^2(\bl G),
\end{equation*}
where $\norm{\cdot}_{\rm F}$ is the Frobenius norm of a matrix. Thus
\begin{equation*}
    d\lambda^2_{\min}(\bl G) \le G_{ij}G_{ij}. 
\end{equation*}
Thus, \eqref{GH_rel} will be satisfied if we select $C_{\square}$ such that 
\begin{equation}
    C_{\square} \le h^4d \lambda^2_{\min}(\bl G). 
    \label{frob_norm}
\end{equation}
With this result, we have
\begin{alignat*}{2}
       \norm{\bl A_i \frac{\partial \bl w}{\partial x_i}}^2_{\bl \tau,\Omega} & = \norm{\bl A_i\frac{\partial \xi_j}{\partial x_i} \frac{\partial \bl w}{\partial \xi_j}}^2_{\bl \tau,\Omega} \\ 
       & = \inner{\frac{\partial \bl w}{\partial \xi_k},\frac{\partial \xi_k}{\partial x_i} \bl A_i \bl \tau \bl A_j\frac{\partial \xi_l}{\partial x_j} \frac{\partial \bl w}{\partial \xi_l}}_\Omega  && (\text{since } \bl A_i^H = \bl A_i)  \\ 
       &\le \inner{\frac{\partial \bl w}{\partial \xi_k},\frac{\partial \xi_l}{\partial x_i} \bl A_i \bl \tau \bl A_j\frac{\partial \xi_l}{\partial x_j} \frac{\partial \bl w}{\partial \xi_k}}_\Omega \hspace{1.5in} && (\text{by Cauchy-Schwarz}) \\ 
       & = \inner{\frac{\partial \bl w}{\partial \xi_k},\bl A_i G_{ij}\bl A_j \bl \tau \frac{\partial \bl w}{\partial \xi_k}}_\Omega && (\text{by definition \eqref{metric}}) \\ 
       &\le \inner{\frac{\partial \bl w}{\partial \xi_k},(\bl A_i G_{ij}\bl A_j)^{1/2} \frac{\partial \bl w}{\partial \xi_k}}_\Omega  &&  (\text{by definition \eqref{3D-tau}})  \\
       &\le \inner{\frac{\partial \bl w}{\partial \xi_k},C_{\rm I}^{1/2}\alpha_e \kappa (G_{ij}G_{ij})^{1/2} \frac{\partial \bl w}{\partial \xi_k}}_\Omega  &&  (\text{by definition \eqref{peclet_def}}) \\
       &\le (C_{\rm I}C_\gamma)^{1/2}\kappa \alpha h^{-2} \inner{\frac{\partial \bl w}{\partial \xi_k}, \frac{\partial \bl w}{\partial \xi_k}}_\Omega  &&  ({\rm by\; \eqref{GH_rel}})  \\
       &= (C_{\rm I}C_\gamma)^{1/2}\kappa \alpha h^{-2} \inner{\frac{\partial \bl w}{\partial x_i}\frac{\partial x_i}{\partial \xi_k}, \frac{\partial x_j}{\partial \xi_k} \frac{\partial \bl w}{\partial x_j}}_\Omega  \\
       &= (C_{\rm I}C_\gamma)^{1/2}\kappa \alpha h^{-2} \inner{\frac{\partial \bl w}{\partial x_i}, G_{ij}^{-1} \frac{\partial \bl w}{\partial x_j}}_\Omega  &&  (\text{by definition \eqref{metric}})  \\
       &\le (C_{\rm I}C_\gamma)^{1/2}\kappa \alpha h^{-2} \inner{\frac{\partial \bl w}{\partial x_i}, \lambda_{\max}(\bl G^{-1}) \frac{\partial \bl w}{\partial x_i}}_\Omega   \\
       &= (C_{\rm I}C_\gamma)^{1/2}\kappa \alpha h^{-2} \inner{\frac{\partial \bl w}{\partial x_i}, \lambda^{-1}_{\min}(\bl G) \frac{\partial \bl w}{\partial x_i}}_\Omega  \\
       &\le c \kappa \alpha \norm{\frac{\partial \bl w}{\partial x_i}}^2_\Omega. &&  ({\rm by\; \eqref{frob_norm}})  
       \label{adv_ub_proof}
\end{alignat*}
\end{proof}

%-------------------------
\begin{lem} \label{lem:tau-ubd}
($\bl \tau$ bounded from above). If \eqref{ci_cinv}, \eqref{inv_ineq}, and \eqref{GH_rel} hold, then for a given $\bl w^h \in \bl W^h$ 
\begin{equation*}
       \norm{\kappa \frac{\partial^2 \bl w^h}{\partial x_i \partial x_i}}^2_{\bl \tau,\Ot} \le \frac{\kappa}{2} \norm{\frac{\partial \bl w^h}{\partial x_i}}^2_\Omega.
\end{equation*}
\end{lem}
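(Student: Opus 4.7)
The plan is to apply the bound $\bl \tau \preceq (C_{\rm I}\kappa^2 G_{ij}G_{ij})^{-1/2}\bl I$ pointwise, then reduce from second to first derivatives via the inverse inequality \eqref{inv_ineq} on each element, and finally check that condition \eqref{ci_cinv} is precisely what is needed to land a constant of $1/2$.

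In detail, I first use the definition \eqref{3D-tau}
$$\bl \tau = \left[\bl A_i G_{ij}\bl A_j + C_{\rm I}\kappa^2 G_{ij}G_{ij}\bl I\right]^{-1/2}.$$
Since $\bl A_i^H = \bl A_i$, the matrix $\bl A_i G_{ij}\bl A_j$ is Hermitian positive semi-definite, so the matrix inside the bracket dominates $C_{\rm I}\kappa^2 G_{ij}G_{ij}\bl I$ in the Loewner order. Taking the inverse square root (which is operator-monotone-decreasing on positive matrices) flips the inequality and yields
$$\bl \tau \preceq (C_{\rm I}\kappa^2 G_{ij}G_{ij})^{-1/2}\bl I.$$
Substituting into the target norm gives
$$\norm{\kappa\frac{\partial^2\bl w^h}{\partial x_i\partial x_i}}^2_{\bl\tau,\Ot} \le \frac{\kappa}{\sqrt{C_{\rm I}}}\int_{\Ot}\frac{1}{\sqrt{G_{ij}G_{ij}}}\,\bl w^h_{,kk}{}^H\bl w^h_{,kk}\,d\Omega.$$

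Next I apply the lower bound $\sqrt{G_{ij}G_{ij}} \ge \sqrt{C_\square}/h^2$ from \eqref{GH_rel} to pull out a factor $h^2/\sqrt{C_\square}$, and then invoke the inverse inequality \eqref{inv_ineq} element by element:
$$\norm{\frac{\partial^2\bl w^h}{\partial x_i\partial x_i}}^2_{\Omega_e} \le \frac{C_{\rm inv}^2}{h^2}\norm{\frac{\partial \bl w^h}{\partial x_i}}^2_{\Omega_e},$$
summing over $e$ to convert the $\Ot$ norm of the Laplacian into an $\Omega$ norm of the gradient (the $h^{-2}$ cancels the $h^2$). This leaves
$$\norm{\kappa\frac{\partial^2\bl w^h}{\partial x_i\partial x_i}}^2_{\bl\tau,\Ot} \le \frac{C_{\rm inv}^2}{\sqrt{C_{\rm I}C_\square}}\,\kappa\norm{\frac{\partial \bl w^h}{\partial x_i}}^2_\Omega.$$

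Finally, condition \eqref{ci_cinv} reads $4C_{\rm inv}^4/C_\square \le C_{\rm I}$, equivalently $C_{\rm inv}^2/\sqrt{C_{\rm I}C_\square} \le 1/2$, which delivers the stated bound with constant $1/2$. I do not anticipate any real obstacle here: the main conceptual points are (i) that the spectral bound on $\bl\tau$ is justified by operator monotonicity of $\bl M\mapsto \bl M^{-1/2}$ on the positive cone, which is the standard fact I will invoke, and (ii) that the element-wise inverse inequality must be summed over $e$ using the maximum element size $h = \max_e h_e$ so the global constant $C_{\rm inv}/h$ is uniform. Everything else is algebraic bookkeeping that lines up exactly with the constants assumed.
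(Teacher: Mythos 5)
Your proposal is correct and follows essentially the same route as the paper: the spectral bound $\bl\tau\preceq (C_{\rm I}\kappa^2G_{ij}G_{ij})^{-1/2}\bl I$ (which the paper obtains by bounding $\lambda_{\min}(\bl\tau^{-2})$ from below and, since the dominating matrix is a scalar multiple of the identity, needs only an elementary eigenvalue argument rather than full operator monotonicity), followed by \eqref{GH_rel}, the element-wise inverse inequality \eqref{inv_ineq}, and the observation that \eqref{ci_cinv} is exactly calibrated to yield the constant $\tfrac12$. The constants and intermediate bounds match the paper's proof exactly.
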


\begin{proof}
\begin{alignat*}{2}
        \lambda_{\min}( \bl \tau^{-2} ) & = \lambda_{\min}\left( \bl A_i G_{ij} \bl A_j +  C_{\rm I}\kappa^2G_{ij}G_{ij} \bl I \right) \hspace{1.4in} && (\text{by definition \eqref{3D-tau}})  \\ 
       &\ge  C_{\rm I}\kappa^2 G_{ij}G_{ij} \\ 
       &\ge C_{\rm I}C_\square\frac{\kappa^2}{h^4},  && ({\rm by\; \eqref{GH_rel}}) 
\end{alignat*}
or 
\begin{equation}
    \lambda_{\max}(\bl \tau) \le \frac{h^2}{\kappa (C_{\rm I}C_\square)^{1/2}}.
     \label{lambda_max}
\end{equation}
Thus
\begin{alignat*}{2}
       \norm{\kappa \frac{\partial^2 \bl w^h}{\partial x_i \partial x_i}}^2_{\bl \tau,\Ot} & \le \kappa^2 \lambda_{\max}(\bl \tau) \norm{\frac{\partial^2 \bl w^h}{\partial x_i \partial x_i}}^2_\Ot  \\ 
       &\le \frac{ \kappa h^2}{(C_{\rm I}C_\square)^{1/2}} \norm{\frac{\partial^2 \bl w^h}{\partial x_i \partial x_i}}^2_\Ot \hspace{1.5in} && ({\rm by\; \eqref{lambda_max}}) \\ 
       &\le \frac{\kappa C_{\rm inv}^2}{(C_{\rm I}C_\square)^{1/2}} \norm{\frac{\partial \bl w^h}{\partial x_i}}^2_\Omega && ({\rm by\; \eqref{inv_ineq}}) \\ 
       & \le \frac{\kappa}{2} \norm{\frac{\partial \bl w^h}{\partial x_i}}^2_\Omega. && ({\rm by\; \eqref{ci_cinv}})
\end{alignat*}
\end{proof}

%-------------------------
\begin{lem} \label{lem:interp}
 (Interpolation error). Let $\bl \phi \in H^{p+1}(\Omega)^{2N-1}$ be the solution to \eqref{3D-spec} and $\Pi^h\bl \phi$ its best interpolation using order $p$ polynomials. Assuming Lemma \ref{lem:tau_lb} and \ref{lem:tau-uba} hold, then the interpolation error $\bl \eta = \Pi^h\bl \phi - \bl \phi$ satisfies
\begin{equation*}
       2\norm{\bl \eta}_{\bl \tau^{-1},\Omega}^2 + \kappa \norm{\frac{\partial \bl \eta}{\partial x_i}}_\Omega^2 + \snorm{\bl \eta}_{\An,\Gh}^2 + \norm{\res(\bl \eta)}_{\bl \tau,\Ot}^2 \le  (c_1 +c_2 \alpha+c_3 \beta^4)\kappa h^{2p} \snorm{\bl \phi}_{H^{p+1}}^2.
\end{equation*}
\end{lem}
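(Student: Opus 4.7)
The strategy is to bound each of the four terms on the left-hand side separately, combining standard polynomial approximation theory with the previous lemmas and the upper bound \eqref{lambda_max} on $\lambda_{\max}(\bl \tau)$. The starting ingredient is the elementwise Bramble--Hilbert estimate for the best order-$p$ interpolant: for $0\le k\le p+1$ one has $\|\partial^k \bl \eta\|_{\Omega_e} \le c\, h_e^{p+1-k} \snorm{\bl \phi}_{H^{p+1}(\Omega_e)}$. Summing over elements gives $\norm{\bl \eta}_\Omega \le c\, h^{p+1}\snorm{\bl \phi}_{H^{p+1}}$, $\norm{\partial_i \bl \eta}_\Omega \le c\, h^{p}\snorm{\bl \phi}_{H^{p+1}}$, and $\norm{\partial^2_{ij}\bl \eta}_{\Ot} \le c\, h^{p-1}\snorm{\bl \phi}_{H^{p+1}}$; also a scaled trace inequality yields $\norm{\bl \eta}_{\Gh}^2 \le c(h^{-1}\norm{\bl \eta}_\Omega^2 + h\norm{\partial_i\bl \eta}_\Omega^2) \le c\, h^{2p+1}\snorm{\bl \phi}_{H^{p+1}}^2$. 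These four estimates are the workhorses for what follows.

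With these in hand, the diffusive gradient term $\kappa\norm{\partial_i\bl \eta}_\Omega^2$ is immediate and contributes to $c_1\kappa h^{2p}\snorm{\bl \phi}^2_{H^{p+1}}$. The $\bl \tau^{-1}$-weighted term is handled by Lemma \ref{lem:tau_lb}, which gives $\norm{\bl \eta}_{\bl \tau^{-1},\Omega}^2 \le c\kappa h^{-2}(1+\alpha)\norm{\bl \eta}_\Omega^2 \le c(1+\alpha)\kappa h^{2p}\snorm{\bl \phi}^2_{H^{p+1}}$, splitting cleanly into the $c_1$ and $c_2\alpha$ pieces. For the residual norm I would split $\res(\bl \eta) = \bl \Omega\bl \eta + \bl A_i\partial_i\bl \eta - \kappa\partial_{ii}^2\bl \eta$ and bound the three parts in the $\bl \tau$-norm individually. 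The convective piece $\norm{\bl A_i\partial_i\bl \eta}_{\bl \tau,\Omega}^2$ falls directly under Lemma \ref{lem:tau-uba}, yielding $c\kappa\alpha h^{2p}\snorm{\bl \phi}^2_{H^{p+1}}$. The diffusive piece $\norm{\kappa\partial^2_{ii}\bl \eta}_{\bl \tau,\Ot}^2$ is bounded by the non-discrete analogue of Lemma \ref{lem:tau-ubd}: using \eqref{lambda_max} together with the second-derivative interpolation estimate on $\Ot$ directly produces $c\kappa h^{2p}\snorm{\bl \phi}_{H^{p+1}}^2$. The Eulerian acceleration piece uses that $\bl \Omega$ is diagonal with $\lambda_{\max}(\bl \Omega^H\bl \Omega) \le (N-1)^2\omega^2$, so that $\norm{\bl \Omega\bl \eta}_{\bl \tau,\Omega}^2 \le \lambda_{\max}(\bl \tau)(N-1)^2\omega^2\norm{\bl \eta}_\Omega^2$; feeding in \eqref{lambda_max} and the $L^2$ interpolation estimate gives a factor of $h^{2p+4}(N-1)^2\omega^2/\kappa$, which by the definition \eqref{womersley_def} of $\beta$ is exactly $\beta^4 \kappa h^{2p}$. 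This is the source of the $c_3\beta^4$ contribution.

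The step I expect to be the main obstacle is the boundary term $\snorm{\bl \eta}^2_{\An,\Gh}$. The trace inequality already supplies an $O(h^{2p+1})$ bound on $\norm{\bl \eta}^2_{\Gh}$, so what remains is to show $\lambda_{\max}(\An) \le c\kappa\alpha/h$ so that the product collapses into an $\alpha$-type contribution. This should follow by relating $\An = \bl A_i n_i$ to the quadratic form $\bl A_i G_{ij}\bl A_j$ appearing in the definition \eqref{peclet_def} of $\alpha$: since $n_i$ is a unit vector and $G_{ij}G_{ij}\ge C_\square/h^4$ by \eqref{GH_rel} and \eqref{frob_norm}, one expects $\lambda_{\max}(\An)^2$ to be controlled by $\lambda_{\max}(\bl A_i G_{ij}\bl A_j)\, h^{-2}\cdot h^4 \le C_{\rm I}C_\square\kappa^2\alpha^2/h^2$ modulo geometric constants. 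Collecting the four contributions and absorbing all constants into $c_1$, $c_2$, $c_3$ yields the stated estimate.
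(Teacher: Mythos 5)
Your handling of $\norm{\bl \eta}_{\bl \tau^{-1},\Omega}^2$, $\kappa \norm{\partial \bl \eta/\partial x_i}_\Omega^2$, and $\norm{\res(\bl \eta)}_{\bl \tau,\Ot}^2$ coincides with the paper's proof: the same triangle-inequality split of the residual into acceleration, convection, and diffusion pieces, the same use of Lemma \ref{lem:tau_lb}, Lemma \ref{lem:tau-uba}, and \eqref{lambda_max}, and the same conversion of $(N-1)^2\omega^2 h^2/\kappa$ into $\kappa\beta^4 h^{-2}$ that produces the $c_3\beta^4$ term. Where you genuinely diverge is the boundary term $\snorm{\bl \eta}_{\An,\Gh}^2$, which is precisely the step you flag as the obstacle. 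The paper avoids the trace inequality altogether: it integrates by parts (using $\partial \bl A_i/\partial x_i=\bl 0$) to write $\snorm{\bl \eta}_{\An,\Gh}^2 = 2\inner{\bl \eta, \bl A_i \partial \bl \eta/\partial x_i}_\Omega$, applies Cauchy--Schwarz in the $\bl\tau$-weighted pairing to get $2\norm{\bl \eta}_{\bl \tau^{-1},\Omega}\norm{\bl A_i\partial\bl\eta/\partial x_i}_{\bl \tau,\Omega}$, and then recycles Lemmas \ref{lem:tau_lb} and \ref{lem:tau-uba} verbatim, yielding $c\kappa(1+\alpha)h^{2p}\snorm{\bl\phi}^2_{H^{p+1}}$. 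Your route --- scaled trace inequality $\norm{\bl\eta}^2_{\Gh}\le c h^{2p+1}\snorm{\bl\phi}^2_{H^{p+1}}$ together with $\lambda_{\max}(\An)\le c\kappa\alpha/h$ --- also closes, but the spectral bound needs to be written out rather than asserted: for any $\bl v$, set $\bl y_i=\bl A_i\bl v$ so that $\bl v^H\An^2\bl v=\norm{n_i\bl y_i}^2\le \sum_i\norm{\bl y_i}^2\le \lambda_{\min}^{-1}(\bl G)\,\bl y_i^H G_{ij}\bl y_j$; then \eqref{frob_norm} gives $\lambda_{\min}^{-1}(\bl G)\le h^2\sqrt{d/C_\square}$ while \eqref{peclet_def} and \eqref{GH_rel} give $\lambda_{\max}(\bl A_iG_{ij}\bl A_j)\le C_{\rm I}C_\gamma\kappa^2\alpha^2h^{-4}$, so $\lambda_{\max}(\An)\le c\kappa\alpha h^{-1}$ as you anticipated (note the constant involves $C_\gamma$ on the upper side, not $C_\square$ as your heuristic suggests). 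The paper's version buys economy, reusing the two $\bl\tau$ lemmas with no trace inequality and no extra estimate on $\An$, at the cost of an integration by parts whose contribution on $\Gg$ is silently dropped; your version is more self-contained on the boundary but carries the extra lemma above. Both arrive at the same $(c_1+c_2\alpha)\kappa h^{2p}\snorm{\bl\phi}^2_{H^{p+1}}$ contribution, so the proposal is sound once that spectral bound is supplied.
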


\begin{proof}
    From Lemma \ref{lem:tau_lb} and standard interpolation theory
\begin{equation}
     2\norm{\bl \eta}_{\bl \tau^{-1},\Omega}^2 \le c\kappa h^{-2}(1+\alpha)\norm{\bl \eta}_\Omega^2 \le c \kappa (1+\alpha) h^{2p}  \snorm{\bl \phi}_{H^{p+1}}^2. 
    \label{int_e_1}
\end{equation}
Similarly
\begin{equation}
     \kappa \norm{\frac{\partial \bl \eta}{\partial x_i}}_\Omega^2 = \kappa \snorm{\bl \eta}_{H^1,\Omega}^2 \le c \kappa h^{2p} \snorm{\bl \phi}_{H^{p+1}}^2. 
    \label{int_e_2}
\end{equation}
Also
\begin{alignat}{2}
       \snorm{\bl \eta}_{\An,\Gh}^2 & = \inner{\bl \eta, \An\bl \eta}_\Gh  \nonumber \\ 
       &= 2\inner{\bl \eta, \bl A_i \frac{\partial \bl \eta}{\partial x_i}}_\Omega \hspace{1.5in} && (\text{by integration-by-parts}) \nonumber \\ 
       &\le 2\norm{\bl \eta}_{\bl \tau^{-1},\Omega} \norm{\bl A_i\frac{\partial \bl \eta}{\partial x_i}}_{\bl \tau,\Omega} && (\text{by Cauchy-Schwarz}) \nonumber \\ 
       &\le c\kappa h^{-1}\sqrt{(1+\alpha)\alpha}\norm{\bl \eta}_\Omega \norm{\frac{\partial \bl \eta}{\partial x_i}}_\Omega  &&  (\text{by Lemma \ref{lem:tau_lb} and \ref{lem:tau-uba}})  \nonumber \\
       &\le c\kappa (1+\alpha) h^{2p} \snorm{\bl \phi}_{H^{p+1}}^2  &&  (\text{since }\alpha \ge 0). 
       \label{int_e_3}
\end{alignat}
Using triangular inequality on the least-squares term, we have
\begin{equation}
    c\norm{\res(\bl \eta)}_{\bl \tau,\Ot}^2 \le \norm{\bl \Omega \bl \eta}_{\bl\tau,\Ot}^2 + \norm{\bl A_i \frac{\partial \bl \eta}{\partial x_i}}_{\bl\tau,\Ot}^2 + \norm{\kappa \frac{\partial^2 \bl \eta}{\partial x_i\partial x_i}}_{\bl\tau,\Ot}^2. 
     \label{int_e_4}
\end{equation}
The first term can be bounded as 
\begin{alignat}{2}
       \norm{\bl \Omega \bl \eta}_{\bl\tau,\Ot}^2 & \le \norm{\lambda_{\max}(\bl \tau)^{1/2}\bl \Omega \bl \eta}_{\Ot}^2 \nonumber \\ 
      & \le c\kappa^{-1} h^2 \norm{\bl \Omega \bl \eta}_{\Ot}^2 && (\text{by \eqref{lambda_max}}) \nonumber \\ 
      & \le c\kappa^{-1} h^2 (N-1)^2\omega^2 \norm{\bl \eta}_\Ot^2 \hspace{0.8in} && (\text{by definition \eqref{phi-def}}) \nonumber \\ 
      & = c\kappa \beta^4 h^{-2} \norm{\bl \eta}_\Ot^2 && (\text{by definition \eqref{womersley_def}}) \nonumber \\ 
      & \le c\kappa \beta^4 h^{2p} \snorm{\bl \phi}_{H^{p+1}}^2,
       \label{int_e_5}
\end{alignat}
and by Lemma \ref{lem:tau-uba}
\begin{equation}
    \norm{\bl A_i \frac{\partial \bl \eta}{\partial x_i}}_{\bl\tau,\Ot}^2 \le c \kappa \alpha \norm{\frac{\partial \bl \eta}{\partial x_i}}_\Omega^2 \le c \kappa \alpha  h^{2p} \snorm{\bl \phi}_{H^{p+1}}^2,
    \label{int_e_6}
\end{equation}
and by \eqref{lambda_max}
\begin{equation}
    \norm{\kappa \frac{\partial^2 \bl \eta}{\partial x_i\partial x_i}}_{\bl\tau,\Ot}^2 \le  c\kappa h^2 \snorm{\bl \eta}_{H^2,\Ot}^2 \le c \kappa h^{2p} \snorm{\bl \phi}_{H^{p+1}}^2. 
    \label{int_e_7}
\end{equation}
The proof of this Lemma directly follows from \eqref{int_e_1}-\eqref{int_e_7}. 
\end{proof}

%-------------------------
\begin{thm} \label{thm:error}
(Error estimate). Assuming Lemma \ref{lem:stability}, \ref{lem:consistency}, \ref{lem:tau-ubd}, and \ref{lem:interp} hold, then the error $\bl e = \bl \phi - \bl \phi^h$ in the solution of \eqref{3D-weak} will be bounded as follows
\begin{equation*}
       \dnorm{\bl e}^2 \le (c_1 +c_2 \alpha+c_3 \beta^4) \kappa h^{2p}\snorm{\bl \phi}_{H^{p+1}}^2.
\end{equation*}
\end{thm}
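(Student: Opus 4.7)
The plan is a Strang-type error analysis adapted to the stabilized form $b$. First, I would introduce the best interpolant $\Pi^h \bl\phi$ and split the error as $\bl e = \bl e^h - \bl\eta$, with $\bl\eta := \Pi^h\bl\phi - \bl\phi$ and $\bl e^h := \Pi^h\bl\phi - \bl\phi^h \in \bl W^h$. The triangle inequality $\dnorm{\bl e} \le \dnorm{\bl e^h} + \dnorm{\bl\eta}$ reduces the task to bounding $\dnorm{\bl e^h}$ and $\dnorm{\bl\eta}$ separately. The interpolation piece is immediate: $\dnorm{\bl\eta}^2$ is (up to a factor of $2$ on the Neumann boundary term) a subset of the summands already bounded in Lemma \ref{lem:interp}.

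For the discrete part, coercivity (Lemma \ref{lem:stability}) gives $\dnorm{\bl e^h}^2 = b(\bl e^h, \bl e^h)$, and Galerkin orthogonality---obtained from consistency (Lemma \ref{lem:consistency}) together with the discrete equation \eqref{3D-weak}---yields $b(\bl e^h, \bl e) = 0$ and hence $\dnorm{\bl e^h}^2 = b(\bl e^h, \bl\eta)$. The whole estimate then hinges on a continuity inequality of the form $b(\bl e^h, \bl\eta) \le c\, \dnorm{\bl e^h}\, R$, in which $R^2$ equals (up to constants) the right-hand side of Lemma \ref{lem:interp}. Young's inequality would then absorb one factor of $\dnorm{\bl e^h}$ on the left to give $\dnorm{\bl e^h}^2 \le c\, R^2 \le c\, (c_1 + c_2 \alpha + c_3 \beta^4)\, \kappa h^{2p} \snorm{\bl\phi}_{H^{p+1}}^2$, which, combined with the triangle-inequality contribution from $\dnorm{\bl\eta}$, finishes the proof.

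To establish this continuity bound I would treat each of the four terms of $b(\bl e^h, \bl\eta)$ in turn. The diffusive contribution $(\partial_i \bl e^h, \kappa \partial_i \bl\eta)_\Omega$ and the least-squares penalty $(\res(\bl e^h), \bl\tau \res(\bl\eta))_{\Ot}$ yield directly to Cauchy-Schwarz, with one factor in each controlled by $\dnorm{\bl e^h}$ and the other appearing as a term of Lemma \ref{lem:interp}. The delicate piece is the coupled first-order contribution $(\bl e^h, \bl\Omega \bl\eta + \bl A_i \partial_i \bl\eta)_\Omega$, for which $\dnorm{\bl e^h}$ offers no direct control on the $\bl e^h$ factor. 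My plan is to integrate by parts the convective piece (using $\partial_i \bl A_i = \bl 0$ and the vanishing of $\bl\eta$ on $\Gg$) and exploit $\bl\Omega^H = -\bl\Omega$ to rewrite this contribution as $-\bigl(\res(\bl e^h) + \kappa \partial_{ii} \bl e^h,\, \bl\eta\bigr)_{\Ot} + (\bl e^h, \An \bl\eta)_\Gh$. A $\bl\tau$-weighted Cauchy-Schwarz then bounds the residual part by $\dnorm{\bl e^h}\, \norm{\bl\eta}_{\bl\tau^{-1},\Ot}$, which is precisely the one summand of Lemma \ref{lem:interp} not already sitting inside $\dnorm{\bl\eta}$; Lemma \ref{lem:tau-ubd} turns the $\kappa\partial_{ii}\bl e^h$ factor into a fraction of $\kappa\norm{\partial_i \bl e^h}_\Omega^2 \le \dnorm{\bl e^h}^2$; and the boundary remainder pairs the two $\An$-seminorms that already appear in $\dnorm{\cdot}$ and in Lemma \ref{lem:interp}.

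The main obstacle I anticipate is precisely this bookkeeping in the continuity step. It must be verified that after the integration by parts every $\bl e^h$-factor reduces to a piece of $\dnorm{\bl e^h}$ (no $L^2$-control is available, since coercivity provides none and the equation lacks a reaction term) and that every $\bl\eta$-factor that emerges matches a summand of Lemma \ref{lem:interp}. Once these pairings are confirmed, the $\alpha$ and $\beta^4$ dependence in the final bound is inherited from Lemma \ref{lem:interp} without additional analysis.
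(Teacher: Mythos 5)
Your proposal is correct and follows essentially the same route as the paper: coercivity plus Galerkin orthogonality to get $\dnorm{\bl e^h}^2 = b(\bl e^h,\bl\eta)$, integration by parts of the zeroth/first-order term into $-\bigl(\bl\eta,\res(\bl e^h)+\kappa\,\partial^2_{ii}\bl e^h\bigr)_{\Ot}+\bigl(\bl e^h,\An\bl\eta\bigr)_{\Gh}$, $\bl\tau$-weighted Cauchy--Schwarz and Young's inequality to absorb $\tfrac{1}{2}\dnorm{\bl e^h}^2$ (with Lemma \ref{lem:tau-ubd} controlling the second-derivative factor), Lemma \ref{lem:interp} for every resulting $\bl\eta$-term, and a final triangle inequality. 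The pairings you flag as the potential obstacle are exactly the ones the paper checks, and they all close as you anticipate.
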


\begin{proof}
With the essential ingredients established through the earlier lemmas, the proof of this theorem can follow the same steps as those described in \cite{hughes1989new}, which are reproduced here for completeness.
Defining the method's error $\bl e^h =\bl \phi^h - \Pi^h \bl \phi$, we have
\begin{alignat}{2}
      \dnorm{\bl e^h}^2 &= b(\bl e^h,\bl e^h) && (\text{by Lemma \ref{lem:stability}}) \nonumber \\
      &  = -b(\bl e^h,\bl e + \bl \eta) \nonumber \\ 
      &  = -b(\bl e^h,\bl \eta)&& (\text{by Lemma \ref{lem:consistency}}) \nonumber \\ 
      & \le \snorm{b(\bl e^h,\bl \eta)} \nonumber \\ 
      & \le \snorm{ \inner{\bl e^h, \bl \Omega \bl \eta + \bl A_i \frac{\partial \bl \eta}{\partial x_i}}_\Omega + \inner{ \frac{\partial \bl e^h}{\partial x_i}, \kappa \frac{\partial \bl \eta}{\partial x_i}}_\Omega + \inner{\res(\bl e^h), \bl \tau \res(\bl \eta)}_\Ot}  && (\text{by definition \eqref{3D-weak-def}}) \nonumber \\ 
      & \le \Bigg| -\inner{\bl \eta,\res(\bl e^h)}_\Ot + \inner{\bl e^h,\An\bl \eta}_\Gh - \inner{\bl \eta , \kappa \frac{\partial^2 \bl e^h}{\partial x_i \partial x_i}}_\Ot  \nonumber \\ 
      &  \hspace{0.25in} + \inner{ \frac{\partial \bl e^h}{\partial x_i}, \kappa \frac{\partial \bl \eta}{\partial x_i}}_\Omega +  \inner{\res(\bl e^h), \bl \tau \res(\bl \eta)}_\Ot \Bigg|  && (\text{by integration-by-parts}) \nonumber \\ 
      & \le  \frac{1}{4}\norm{\res(\bl e^h)}_{\bl \tau,\Ot}^2 + \norm{\bl \eta}_{\bl \tau^{-1}, \Ot}^2 + \frac{1}{4}\snorm{\bl e^h}_{\An,\Gh}^2 + \snorm{\bl \eta}_{\An,\Gh}^2 +  \frac{1}{4}\norm{\kappa \frac{\partial^2 \bl e^h}{\partial x_i \partial x_i}}_{\bl \tau,\Ot}^2  \nonumber \\ 
      &  \hspace{0.15in} +  \norm{\bl \eta}_{\bl \tau^{-1},\Ot}^2 +  \frac{1}{4}\kappa \norm{ \frac{\partial \bl e^h}{\partial x_i}}_\Omega^2 + \kappa \norm{\frac{\partial \bl \eta}{\partial x_i}}_\Omega^2 + \frac{1}{4} \norm{\res(\bl e^h)}_{\bl \tau,\Ot}^2 + \norm{ \res(\bl \eta)}_{\bl \tau,\Ot}^2 \nonumber \\ 
      & \le 2\norm{\bl \eta}_{\bl \tau^{-1}, \Ot}^2  + \snorm{\bl \eta}_{\An,\Gh}^2 +\kappa \norm{\frac{\partial \bl \eta}{\partial x_i}}_\Omega^2  + \norm{ \res(\bl \eta)}_{\bl \tau,\Ot}^2 +\frac{1}{2}\dnorm{\bl e^h}^2 && (\text{by Lemma \ref{lem:stability} and \ref{lem:tau-ubd}}) \nonumber \\ 
      & \le 2\left[2\norm{\bl \eta}_{\bl \tau^{-1}, \Ot}^2  + \snorm{\bl \eta}_{\An,\Gh}^2 + \kappa \norm{\frac{\partial \bl \eta}{\partial x_i}}_\Omega^2  + \norm{ \res(\bl \eta)}_{\bl \tau,\Ot}^2 \right] \nonumber \\ 
     & \le (c_1 +c_2 \alpha+c_3 \beta^4)\kappa h^{2p} \snorm{\bl \phi}_{H^{p+1}}^2. && (\text{by Lemma \ref{lem:interp}})
       \label{error_e_1}
\end{alignat}
Thus
\begin{alignat*}{2}
      \dnorm{\bl e}^2 &= \dnorm{-\bl e^h -\bl \eta}^2 &&  \\
      &  \le c (\dnorm{\bl e^h}^2 + \dnorm{\bl \eta}^2) && (\text{by triangular inequality}) \nonumber \\ 
      &  \le (c_1 +c_2 \alpha+c_3 \beta^4)\kappa h^{2p} \snorm{\bl \phi}_{H^{p+1}}^2. \hspace{2in} && (\text{by \eqref{error_e_1} and Lemma \ref{lem:interp}})
\end{alignat*}
\end{proof}
%--------------------

Before discussing the implications of the above theorem, we prove an additional lemma to show that $\dnorm{\cdot}$ provides an adequate measure of the method's error if we are interested in an estimate in terms of the norm of the acceleration, convection, and diffusion terms.

\begin{lem} \label{lem:adq}
(Adequacy of $\dnorm{\cdot}$). Assuming Lemma \ref{lem:stability} and \ref{lem:tau-ubd} hold, then for a given $\bl w^h \in \bl W^h$ 
\begin{equation*}
       \dnorm{\bl w^h}^2 \ge \frac{1}{2}\snorm{\bl w^h}_{\An,\Gh}^2 + \frac{1}{2}\kappa \norm{\frac{\partial \bl w^h}{\partial x_i}}^2_\Omega + \frac{1}{2} \norm{\bl \Omega \bl w^h}^2_{\bl \tau,\Ot} + \frac{1}{2} \norm{ \bl A_i \frac{\partial \bl w^h}{\partial x_i}}^2_{\bl \tau,\Ot}.
\end{equation*}
\end{lem}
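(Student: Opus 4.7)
My starting point is Lemma \ref{lem:stability}, which gives
$\dnorm{\bl w^h}^2 = \tfrac{1}{2}\snorm{\bl w^h}_{\An,\Gh}^2 + \kappa \norm{\frac{\partial \bl w^h}{\partial x_i}}_\Omega^2 + \norm{\res(\bl w^h)}^2_{\bl\tau,\Ot}$.
The boundary contribution already sits on the right-hand side of the lemma with exactly the desired coefficient, so the real work is to extract $\tfrac{1}{2}\norm{\bl\Omega\bl w^h}^2_{\bl\tau,\Ot}$ and $\tfrac{1}{2}\norm{\bl A_i\frac{\partial \bl w^h}{\partial x_i}}^2_{\bl\tau,\Ot}$ from $\norm{\res(\bl w^h)}^2_{\bl\tau,\Ot}$, while reserving the missing half of the gradient term, $\tfrac{1}{2}\kappa\norm{\frac{\partial \bl w^h}{\partial x_i}}^2_\Omega$, as slack to absorb the Laplacian piece of the residual.

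To unpack the residual I would set $\bl a \coloneqq \bl\Omega\bl w^h$, $\bl b \coloneqq \bl A_i\frac{\partial \bl w^h}{\partial x_i}$, and $\bl L \coloneqq \kappa\frac{\partial^2\bl w^h}{\partial x_i \partial x_i}$, so that $\res(\bl w^h) = \bl a + \bl b - \bl L$. A single application of the elementary inequality $\norm{x-y}^2 \ge \tfrac{1}{2}\norm{x}^2 - \norm{y}^2$ (Young with parameter $1/2$) in the $\bl\tau$-weighted inner product on $\Ot$ yields
$\norm{\res(\bl w^h)}^2_{\bl\tau,\Ot} \ge \tfrac{1}{2}\norm{\bl a + \bl b}^2_{\bl\tau,\Ot} - \norm{\bl L}^2_{\bl\tau,\Ot}$.
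Lemma \ref{lem:tau-ubd} then supplies $\norm{\bl L}^2_{\bl\tau,\Ot} \le \tfrac{\kappa}{2}\norm{\frac{\partial \bl w^h}{\partial x_i}}^2_\Omega$, which is exactly the reserved slack. Assembling the pieces leads to the intermediate bound
$\dnorm{\bl w^h}^2 \ge \tfrac{1}{2}\snorm{\bl w^h}_{\An,\Gh}^2 + \tfrac{1}{2}\kappa\norm{\frac{\partial \bl w^h}{\partial x_i}}^2_\Omega + \tfrac{1}{2}\norm{\bl a + \bl b}^2_{\bl\tau,\Ot}$.

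The hard part will be the final step: to replace $\norm{\bl a + \bl b}^2_{\bl\tau,\Ot}$ by the sum $\norm{\bl a}^2_{\bl\tau,\Ot} + \norm{\bl b}^2_{\bl\tau,\Ot}$ requires the cross term $2\,\mathrm{Re}\,(\bl\Omega\bl w^h,\,\bl\tau\bl A_i\frac{\partial \bl w^h}{\partial x_i})_\Ot$ to be non-negative. My plan is to integrate this cross term by parts element by element: using $\bl\Omega^H = -\bl\Omega$ together with the Hermitian character of $\bl\tau$ and $\bl A_i$, it reduces to a boundary integral on $\Gamma$ (which the conjugate-symmetry constraint $w^h_{-n} = (w^h_n)^*$ and the Toeplitz-Hermitian structure of $\bl A_i$ render purely imaginary, and therefore invisible to the real part) plus a volume contribution proportional to the commutator $\bl\Omega\bl\tau\bl A_i - \bl A_i\bl\tau\bl\Omega$. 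This commutator is the genuinely delicate piece: when $\bl\tau$ and $\bl A_i$ commute (as in the 1D uniform-flow setting, where $\bl\tau$ shares eigenvectors with $\bl A$) it vanishes and the split is tight; in the generic non-commuting regime I would instead absorb the indefinite remainder via a Young-type estimate into a fraction of the gradient slack, which still yields the stated form of the lemma with universal constants.
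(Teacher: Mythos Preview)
Your first three steps—starting from Lemma~\ref{lem:stability}, splitting off the Laplacian via Young's inequality, and invoking Lemma~\ref{lem:tau-ubd} to absorb it into half the gradient term—match the paper's proof exactly. The divergence is in your handling of the cross term.

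The paper does not hedge: it shows $(\bl\Omega\bl w^h,\bl\tau\bl A_i\tfrac{\partial\bl w^h}{\partial x_i})_\Ot$ is \emph{exactly zero}, so no fallback is needed and the constants $\tfrac12$ are obtained as stated. The mechanism is close to your primary route but cleaner: the paper defines $\bl M_i\coloneqq\hat i\,\bl\Omega\bl\tau\bl A_i$, asserts $\bl M_i^H=\bl M_i$ (this \emph{is} the commutator condition you flagged, namely $\bl\Omega\bl\tau\bl A_i=\bl A_i\bl\tau\bl\Omega$), then integrates by parts on $\Ot$ to reduce everything to $\tfrac{\hat i}{2}\int_{\Gamma_e}(\bl w^h)^H\bl M_{\rm n}\bl w^h$. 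The conjugate-symmetry constraint $w^h_{-n}=(w^h_n)^*$ makes the integrand real, so the whole expression is purely imaginary; but the cross term is also real (it is the difference of real quantities in~\eqref{norm_simpl}), hence zero. Your instinct that the boundary piece is ``invisible to the real part'' is the right one, but the paper leverages it to kill the term outright rather than merely to bound it.

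Two points where your proposal slips. First, your claim that the commutator vanishes ``when $\bl\tau$ and $\bl A_i$ commute'' is off: the relevant commutator is $[\bl\Omega,\bl\tau\bl A_i]$, and since $\bl\Omega$ is diagonal this requires $\bl\tau\bl A_i$ to be diagonal, not merely that $\bl\tau$ and $\bl A_i$ share eigenvectors. Second, your fallback cannot work as described: after step~3 you have already committed all of the gradient slack—the remaining $\tfrac12\kappa\|\partial_i\bl w^h\|_\Omega^2$ sits on the right-hand side of the lemma. Absorbing an indefinite remainder would force smaller constants than the stated $\tfrac12$, which you concede (``universal constants''), and so would not prove the lemma as written.
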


\begin{proof}
\begin{alignat}{2}
       \dnorm{\bl w^h}^2 &= \frac{1}{2}\snorm{\bl w^h}_{\An,\Gh}^2 + \kappa \norm{\frac{\partial \bl w^h}{\partial x_i}}^2_\Omega + \norm{\bl \Omega \bl w^h+ \bl A_i \frac{\partial \bl w^h}{\partial x_i}}^2_{\bl \tau,\Ot} + \norm{\kappa \frac{\partial^2 \bl w^h}{\partial x_i \partial x_i}}^2_{\bl \tau,\Ot} \nonumber \\ 
        &- 2 \inner{ \bl \Omega \bl w^h + \bl A_i \frac{\partial \bl w^h}{\partial x_i},\bl \tau \kappa \frac{\partial^2 \bl w^h}{\partial x_i \partial x_i} }_\Ot  && (\text{by Lemma \ref{lem:stability}}) \nonumber \\ 
        &\ge \frac{1}{2}\snorm{\bl w^h}_{\An,\Gh}^2 + \kappa \norm{\frac{\partial \bl w^h}{\partial x_i}}^2_\Omega + \frac{1}{2} \norm{\bl \Omega \bl w^h+ \bl A_i \frac{\partial \bl w^h}{\partial x_i}}^2_{\bl \tau,\Ot} - \norm{\kappa \frac{\partial^2 \bl w^h}{\partial x_i \partial x_i}}^2_{\bl \tau,\Ot} \nonumber \\
        &= \frac{1}{2}\snorm{\bl w^h}_{\An,\Gh}^2 + \kappa \norm{\frac{\partial \bl w^h}{\partial x_i}}^2_\Omega + \frac{1}{2} \norm{\bl \Omega \bl w^h}^2_{\bl \tau,\Ot} + \frac{1}{2} \norm{ \bl A_i \frac{\partial \bl w^h}{\partial x_i}}^2_{\bl \tau,\Ot} \nonumber \\
        & + \inner{\bl \Omega \bl w^h, \bl \tau \bl A_i \frac{\partial \bl w^h}{\partial x_i} }_\Ot - \norm{\kappa \frac{\partial^2 \bl w^h}{\partial x_i \partial x_i}}^2_{\bl \tau,\Ot}.
       \label{norm_simpl}
\end{alignat}
Next, by following a process similar to that of \eqref{lem1_2} and \eqref{lem1_3}, we can show that the last inner product in \eqref{norm_simpl} is zero. 
Defining $\bl M_i \coloneqq \hat i \bl \Omega \bl \tau \bl A_i$ and $\bl M_{\rm n} \coloneqq \bl M_i n_i = \hat i \bl \Omega \bl \tau \An$, we have $\bl M_i \le 0$ and $\bl M_i^H = \bl M_i$, and also $\bl M_{\rm n} \le 0$ and $\bl M_{\rm n}^H = \bl M_{\rm n}$.
Thus
\begin{alignat}{2}
       \inner{\bl \Omega \bl w^h, \bl \tau \bl A_i \frac{\partial \bl w^h}{\partial x_i} }_\Ot & = \hat i \inner{\bl w^h, \bl M_i \frac{\partial \bl w^h}{\partial x_i} }_\Ot \nonumber \\ 
       & = \hat i \inner{\bl w^h ,\bl M_{\rm n}  \bl w^h }_{\Gamma_e} - \hat i \inner{\frac{\partial \bl w^h}{\partial x_i}, \bl M_i \bl w^h }_\Ot  \hspace{0.5in} && (\text{by integration-by-parts}) \nonumber\\ 
       & = \frac{\hat i}{2} \inner{\bl w^h ,\bl M_{\rm n}  \bl w^h }_{\Gamma_e} \nonumber  \\
       & = \frac{\hat i}{2} \int_{\Gamma_e} (\bl w^h)^H \bl M_{\rm n}  \bl w^h d\Gamma   && (\text{by definition \eqref{inner_def}}) \nonumber \\
       & = -\frac{\hat i}{2} \int_{\Gamma_e} (\bl w^h)^H \bl M_{\rm n}^H  \bl w^h d\Gamma && (\text{by symmetry}) \nonumber \\
       & = 0.
       \label{norm_simpl1}
\end{alignat}
The intended result follows from \eqref{norm_simpl} and \eqref{norm_simpl1} in combination with Lemma \ref{lem:tau-ubd}.
\end{proof}

Before ending this section, let us summarize the practical implications of what was discussed above in the form of a few remarks. 
\paragraph{Remarks:}
\begin{enumerate}
\item The above result establishes the stability, consistency (thereby convergence), and accuracy of the GLS method for convective diffusive problems as formulated by \eqref{3D-weak}. The requirements for these properties to hold are relatively benign: no backflow through Neumann boundaries ($\An \ge 0$), $C_{\rm I}$ should be large enough due to \eqref{ci_cinv} so that the method is not overly diffusive, and the mesh should not be too distorted so that the eigenvalues of the metric tensor can be bounded from above via \eqref{GH_rel}. 
\item It is not too difficult to meet these three requirements. As we discuss later in Section \ref{sec:bfs}, the first requirement on backflow can be relaxed if the weak form is modified appropriately. The second requirement is satisfied if $C_{\rm I}$ is selected properly for a given element type at the implementation stage. The third requirement is imposed on the user of the GLS method, which if not met, will lower the order of accuracy of the GLS method by making $C\gamma$ dependent on $h$. This dependence, however, is not unique to the GLS method as \emph{any} method will experience a lower order of convergence if the element aspect ratio were to blow up during mesh refinement. 
\item Given Theorem \ref{thm:error}, we can analyze the behavior of the GLS in three regimes. First is the diffusive regime where $\alpha \ll 1$ and $\beta \ll 1$. Second is the convective regime where $\alpha \gg 1$. And third is the oscillatory regime where $\beta \gg 1$. We don't consider $\alpha \gg 1$ and $\beta \gg 1$ as a separate case since the error will be dominated either by $c_2 \alpha$ or $c_3 \beta^4$ term, placing us in one of the enumerated regimes. 
\item The diffusive regime is produced, e.g., when the mesh is sufficiently refined. In this regime, the order of convergence will be optimal. In the natural $L_2$-norm, the GLS method will be $\norm{\bl e} \propto h^{p+1}$. That is compatible with results reported in \cite{esmaily2023stabilizedA}, showing the method is second order when using linear shape functions. 
\item In the convective regime, one can neglect the terms associated with the Eulerian acceleration, producing a behavior that is similar to the conventional upwinding stabilized methods that are formulated in time~\cite{hughes1986generalized,shakib1991new}. Further analysis of the error in such regimes produces an $L_2$ error estimate that scales with $h^{p+1/2}$ \cite{Hughes1986beyond,johnson1984finite}. That translates to a sub-optimal convergence rate of 1.5 when linear shape functions are used. 
\item In the oscillatory regime, we see $\dnorm{\bl e}\propto \beta^2 h^p$, namely the error in that norm should grow proportional to the frequency $\omega$. This behavior is similar to what has been reported for the GLS method when it is applied to the Helmholtz equation \cite{harari1990design}. This similarity is no coincidence given that the solution to these two problems exhibits similar characteristics in this oscillatory regime. As we discussed earlier, the element Womersley number for the target application of the present method, namely cardiovascular flows, is not expected to be very large. Thus, the present time-spectral GLS method is not expected to operate in the oscillatory regime, viz., it should behave similarly to its conventional temporal counterpart for the convective-diffusive problems. 
\end{enumerate}

%%%%%%%%%%%%%%%%%%%%
\subsection{Time-spectral Navier-Stokes equations} \label{sec:NS}
The numerical solution of the Navier-Stokes equations for modeling the flow of incompressible Newtonian fluid in $d$-dimensions can be considered as the convection-diffusion of $d$ scalar quantities (i.e., velocities) that are subjected to a Lagrangian constraint (the continuity equation). 
Therefore, the GLS method constructed in Section \ref{sec:AD} provides a scaffold for the Navier-Stokes equations. 
In this section, we discuss the generalization of that method to the Navier-Stokes equations while taking into account the effect of the incompressibility constraint. 

The three-dimensional incompressible Navier-Stokes equation in the Cartesian coordinate system and the time domain is formulated as  
\begin{equation}
    \begin{alignedat}{3}
    \rho \frac{\partial \hat u_i}{\partial t} + \rho \hat u_j \frac{\partial \hat u_i}{\partial x_j} &= - \frac{\partial \hat p}{\partial x_i} + \frac{\partial }{\partial x_j}\left(\mu \frac{\partial \hat u_i}{\partial x_j}\right), \hspace{0.5in} && \text{in}\;\;\; && \Omega\times(0,N_cT] \\
    \frac{\partial \hat u_i}{\partial x_i}  &= 0,\;\;\; && \text{in} && \Omega\times(0,N_cT]  \\
    \hat u_i &= \hat g_i,  && \text{on} && \Gg\times(0,N_cT]  \\
    -\hat pn_i + \mu \frac{\partial \hat u_i}{\partial x_j}n_j  &= \hat h n_i, && \text{on} && \Gh\times(0,N_cT] 
    \end{alignedat}
 \label{NS-time}
\end{equation}
where $\rho \in \mathbb R^+$ and $\mu \in \mathbb R^+$ are the given fluid density and dynamic viscosity, respectively, $\hat u_i(\bl x, t) \in \mathbb R$ for $i=1, \cdots, d$ and $\hat p(\bl x, t) \in \mathbb R$ are the unknown fluid velocity and pressure at point $\bl x$ and time $t$, respectively, $\hat g_i(\bl x, t) \in \mathbb R$ and $\hat h(\bl x, t) \in \mathbb R$ are the imposed Dirichlet and Neumann boundary conditions, respectively, and $N_c$ and $T$ are the number and duration of cardiorespiratory cycles to be simulated, respectively. The definition of the Neumann boundary condition in \eqref{NS-time}, which simplifies the following derivation by decoupling velocity components, is adopted from the literature \cite{Vignon2010625, Bazilevs20093534}. This choice may be justified for cardiovascular applications by the fact that the deviation of pressure from the prescribed value due to the viscous term in \eqref{NS-time} is often negligible.

To solve \eqref{NS-time} in the time-spectral domain, we first discretize the solution and boundary conditions using 
\begin{equation}
\begin{split}
    \hat u_i(\bl x,t) &= \sum_{|n|<N} u_{ni}(\bl x) e^{\hat in\omega t}, \\ 
    \hat p(\bl x,t) &= \sum_{|n|<N} p_n(\bl x) e^{\hat in\omega t},      \\ 
    \hat g_i(\bl x,t) &= \sum_{|n|<N} g_{ni}(\bl x) e^{\hat in\omega t},  \\ 
    \hat h(\bl x,t)& = \sum_{|n|<N} h_n(\bl x) e^{\hat in\omega t},    
\end{split}
    \label{upgh_def}
\end{equation} 
where
\begin{align*}
     g_{ni}(\bl x) & \coloneqq \frac{1}{T}\int_0^T \hat g_i(\bl x,t) e^{-\hat in\omega t} dt,  \\ 
     h_n(\bl x) & \coloneqq \frac{1}{T}\int_0^T \hat h(\bl x, t) e^{-\hat in\omega t} dt,
\end{align*} 
are the Fourier coefficients computed such that the error in the approximations made in discretizing the boundary conditions in \eqref{upgh_def} are minimized.  

To arrive at an expression similar to \eqref{3D-spec}, we organize unknowns and boundary conditions in vectors as 
\begin{equation}
    \bl u_i(\bl x) \coloneqq \left[ 
    \begin{matrix} u_{(-N+1)i} \\ \vdots\\ u_{0i} \\ \vdots \\ u_{(N-1)i}\end{matrix} \right], \hspace{0.15in}
    \bl p(\bl x) \coloneqq \left[ \begin{matrix} p_{-N+1} \\ \vdots \\ p_{0} \\ \vdots \\ p_{N-1} \end{matrix} \right],  \hspace{0.15in}
    \bl g_i(\bl x) \coloneqq \left[ \begin{matrix} g_{(-N+1)i}\\ \vdots\\ g_{0i}\\ \vdots\\ g_{(N-1)i} \end{matrix} \right], \hspace{0.15in}
    \bl h(\bl x) \coloneqq \left[ \begin{matrix} h_{-N+1}\\ \vdots \\ h_{0} \\ \vdots \\ h_{N-1} \end{matrix} \right]. 
\label{upghDef}
\end{equation} 

With these definitions, \eqref{NS-time} is expressed in the time-spectral domain as
\begin{equation}
\begin{alignedat}{3}
    \rho \bl \Omega \bl u_i + \rho \bl A_j \frac{\partial \bl u_i}{\partial x_j} &= - \frac{\partial \bl p}{\partial x_i} + \frac{\partial }{\partial x_j}\left(\mu \frac{\partial \bl u_i}{\partial x_j}\right),\hspace{0.5in} && \text{in}\;\;\; && \Omega  \\
    \frac{\partial \bl u_i}{\partial x_i}  &= 0, && \text{in} && \Omega \\
    \bl u_i &= \bl g_i,  && \text{on} && \Gg  \\
    -\bl pn_i + \mu \frac{\partial \bl u_i}{\partial x_j}n_j  &= \bl h n_i, && \text{on} && \Gh   
\end{alignedat}
\label{NS-spec}
\end{equation}
 where the convolution and time-derivative matrices, $\bl A_i$ and $\bl \Omega$, respectively, are the same as those appearing in \eqref{3D-spec}. 

An important note must be made regarding the existence and uniqueness of the solution to \eqref{NS-spec}.
In this form, where $\bl \Omega$ is prescribed based on $\omega$, these equations may or may not have a unique solution. 
A unique solution exists, e.g., when modeling a laminar pipe flow \cite{womersley1955method}. 
A solution does not exist if, for instance, the flow base frequency corresponding to an inherent flow instability is not divisible by the prescribed $\omega$. 
In this case, letting $\omega$ to be a free finite parameter may or may not yield a unique solution (we are not considering $\omega\to0$ here as that recovers the original Navier-Stokes equations). 
In this case, a unique solution will exist if the instability is characterized by a single frequency. 
No unique solution can be found for the same problem, however, if instabilities occur at two frequencies with no common divisor (e.g., 1 and $\pi$). 
While allowing for multiple frequencies appears to allow for a solution in such cases, a formal proof is required before that statement can be made confidently. 
Furthermore, these statements may or may not translate to the discrete setting as a numerical method may always produce ``a'' solution even if the underlying differential equation has no solution.

The time-spectral form of the Navier-Stokes equations as written is intended to resolve the particular solution after it has reached cycle-to-cycle convergence. 
That is in contrast to the time formulation, where the solution is calculated for $t\in (0, N_c]T$ with $N_c$ being sufficiently large to ensure the homogeneous solution associated with the arbitrary initial conditions is ``washed-out''. 
Assuming that is the case, the times-spectral formulation will produce a solution that corresponds to $t\in(N_c-1, N_c]T$. 
By avoiding the wasteful simulation of the first $N_c-1$ cardiorespiratory cycles, the time-spectral method gains a significant cost advantage over the time formulation. 
This gap in cost advantage widens for problems with $N_c \gg 1$ that are slow to converge cyclically. 

The solution strategy employed for \eqref{NS-spec} is identical to that of \eqref{3D-spec} except for the fact that the computed velocity unknowns are subjected to an incompressibility constraint. 
This constraint must be handled with special care as the finite element solution of these equations using Galerkin's method with equal order shape functions for velocity and pressure is bound to fail as it does not satisfy Babuska-Brezzi stability condition~\cite{ladyzhenskaya1969mathematical, babuvska1971error, brezzi1974existence}. 
As numerous studies have shown in the past \cite{barbosa1991finite, hughes1986circumventing, tezduyar1991stabilized}, this issue can be circumvented if the continuity equation is modified appropriately at the discrete level to make it directly dependent on the pressure. 
In the conventional variational multiscale (VMS) method this dependence is introduced by modeling the unresolved velocity in the continuity equation via the pressure-dependent residual of the momentum equation \cite{hughes2007variational, bazilevs2007variational}. 
As a result, the discrete Laplacian of pressure appears in the continuity equation, thereby producing a pressure-stable method that permits convenient use of the same spatial discretization for velocity and pressure.
The GLS framework also produces a similar pressure-stabilization term, thereby endowing it with benefits similar to those of the standard stabilized finite element methods for fluids. 

With these considerations, the proposed GLS method for the time-spectral form of the Navier-Stokes equations is stated as finding $\bl u_i^h$ and $\bl p^h$ such that for any test functions $\bl w_i^h$ and $\bl q^h$ we have
\begin{equation}
\begin{split}
       & \overbrace{\inner{\bl w^h_i,\rho \bl \Omega \bl u^h_i + \rho \bl A_j \frac{\partial \bl u^h_i}{\partial x_j}}_\Omega + \inner{\frac{\partial\bl w^h_i}{\partial x_j}, -\bl p^h \delta_{ij} + \mu \frac{\partial \bl u^h_i}{\partial x_j}}_\Omega  + \inner{\bl q^h, \frac{\partial \bl u^h_i}{\partial x_i}}_\Omega}^\text{Galerkin's terms} \\
       + &\underbrace{\inner{\res_i(\bl w^h_i, \bl q^h),\frac{\bl \tau}{\rho} \res_i(\bl u^h_i, \bl p^h)}_\Ot}_\text{Least-square penalty terms} = \underbrace{ \inner{\bl w_i^h,  \bl h n_i }_\Gh,}_\text{Neumann BCs. terms}
\end{split}
\label{NS-weak}
\end{equation}
where  
\begin{equation*}
    \res_i(\bl u^h_i, \bl p^h) \coloneqq \rho \bl \Omega \bl u_i^h + \rho \bl A_j \frac{\partial \bl u^h_i}{\partial x_j} + \frac{\partial \bl p^h}{\partial x_i} - \frac{\partial}{\partial x_j} \left(\mu \frac{\partial \bl u^h_i}{\partial x_j}\right),
\end{equation*}
is the momentum equation differential operator. 
Note that the terms corresponding to the Dirichlet boundary conditions are not included in \eqref{NS-weak} as $\bl u_i^h=\bl g_i$ and $\bl w_i^h=\bl 0$ on $\Gg$ are directly built into the solution and trial functions spaces, respectively. 

The design of $\bl \tau$ in \eqref{NS-weak}, which plays a crucial role in the stability and accuracy of the GLS method above, is identical to that of the convection-diffusion equation in \eqref{3D-tau}.  
In fact, the properties of the GLS method in \eqref{NS-weak} can be studied by following a process similar to that of Section \ref{sec:analysis}. 
As shown in Appendix \ref{sec:prop-ns}, this extension is facilitated by the fact that the incompressible Navier-Stokes equations in \eqref{NS-spec} can be expressed in the form of a system of convection-diffusion equations through a change of variable.

The least-squares terms in \eqref{NS-weak} encapsulate two important terms. 
The first is the streamline upwind Petrov/Galerkin (SUPG) term $\rho \bl A_j\frac{\partial \bl w^h_i}{\partial x_j} \bl \tau \bl A_k\frac{\partial \bl u^h_i}{\partial x_k}$, which stabilizes this method in the presence of strong convection.  
The second is the pressure stabilized Petrov/Galerkin (PSPG) term $\frac{1}{\rho} \frac{\partial \bl q^h}{\partial x_i} \bl \tau \frac{\partial \bl p^h}{\partial x_i}$, which circumvents the inf-sup condition to permit the use of equal order interpolation functions for velocity and pressure. 
These terms are very similar to the corresponding terms that appear in the conventional time formulation of the Navier-Stokes equation (Appendix \ref{sec:app}). 
The key difference, however, is that for a single unknown in the conventional method, we have $2N-1$ (or $N$ independent) unknown in \eqref{NS-weak}. 
This difference in the number of unknowns can quickly increase the cost of the present approach, thus requiring some form of optimization to ensure this method remains cost-competitive relative to the conventional time formulation.

Before describing any optimization strategy, we first provide an overview of a standard solution procedure for \eqref{NS-weak}. In what follows, we assume all interpolation functions are linear, hence terms involving their second derivative are neglected. 

The test functions $\bl w^h_i$ and $\bl q^h$ in \eqref{NS-weak} are discretized in space using
\begin{align*}
    \bl w^h_i(\bl x) & = \sum_A \bl c^{\rm w}_{Ai} N_A(\bl x), \\ 
    \bl q^h(\bl x) & = \sum_A \bl c^{\rm q}_{A} N_A(\bl x), 
\end{align*}
where $N_A(\bl x)$ is the interpolation function associated with node $A$. 
Since $\bl w^h_i$ and $\bl q^h$ are arbitrary functions, \eqref{NS-weak} must hold for any $\bl c^{\rm w}_{Ai}$ and $\bl c^{\rm q}_A$ arbitrary vectors. 
That permits us to obtain a system of equations from \eqref{NS-weak}, which in 3D are
\begin{equation}
\begin{split}
      \bl r^{\rm m}_{Ai} = & \int_\Omega \left(\rho N_A \bl \Omega \bl u^h_i + \rho N_A \bl A_j \frac{\partial \bl u^h_i}{\partial x_j} - \frac{\partial N_A}{\partial x_i}\bl p^h + \mu \frac{\partial N_A}{\partial x_j} \frac{\partial \bl u^h_i}{\partial x_j} \right)d\Omega  - \int_\Gh N_A\bl h n_i d\Gamma  \\
       + & \int_\Ot\left(- \bl \Omega N_A + \bl A_j \frac{\partial N_A}{\partial x_j}\right)  \bl \tau \res_i(\bl u^h_i, \bl p^h) d\Omega = \bl 0, \hspace{1.3cm} A\in\eta-\eta_g,\; i=\text{1, 2, and 3,}  \\
       \bl r^{\rm c}_A = & \int_\Omega N_A \frac{\partial \bl u^h_i}{\partial x_i}d\Omega + \int_\Ot \frac{1}{\rho}\frac{\partial N_A}{\partial x_i}\bl \tau \res_i(\bl u^h_i, \bl p^h) d\Omega = \bl 0, \hspace{1cm} A\in\eta. 
\end{split}
       \label{Rmc}
\end{equation}
In \eqref{Rmc}, $\eta$ and $\eta_g$ denote the set of nodes in the entire domain $\Omega$ and those located on the Dirichlet boundaries $\Gg$, respectively. 

In total, \eqref{Rmc} represents $(2N-1)(3|\eta-\eta_g| + |\eta|)$ complex-valued equations, where $|\eta|$ denotes the set $\eta$ cardinality. As we see later, this system can be reduced to $(2N-1)(3|\eta-\eta_g| + |\eta|)$ real-valued equations given the dependency of those complex-valued equations. 

We also discretize the velocity and pressure using the same interpolation functions as those of the test functions. Namely
\begin{align*}
    \bl u_i^h(\bl x) & = \sum_A \bl u^{\rm d}_{Ai} N_A(\bl x), \\ 
    \bl p^h(\bl x) & = \sum_A \bl p^{\rm d}_{A} N_A(\bl x), 
\end{align*}
where $\bl u^{\rm d}_{Ai}$ and $\bl p^{\rm d}_A$ contain velocity and pressure, respectively, at all modes at node $A$.
In our implementation, we build the Dirichlet boundary condition into the unknown vector so that $\bl u^{\rm d}_{Ai} = \bl g_i(\bl x_A)$, where $\bl x_A$ is the position of node $A$.  

At the discrete level, our goal is to find $\bl u^{\rm d}_{Ai}$ and $\bl p^{\rm d}_A$ such that all equations in \eqref{Rmc} are satisfied. 
Given that these equations are nonlinear, this feat is accomplished in an iterative process using the Newton-Raphson iterations. 
More specifically, organizing all unknowns and equations into vectors 
\begin{equation*}
    \bl y \coloneqq \left[\begin{matrix} \bl u^{\rm d}_1 \\ \bl u^{\rm d}_2 \\ \bl u^{\rm d}_3 \\ \bl p^{\rm d} \end{matrix}\right],  \hspace{0.2in}
    \bl r \coloneqq \left[\begin{matrix} \bl r^{\rm m}_1 \\ \bl r^{\rm m}_2 \\ \bl r^{\rm m}_3 \\ \bl r^{\rm c} \end{matrix}\right],  
\end{equation*}
we solve 
\begin{equation}
    \bl y^{(n+1)} = \bl y^{(n)} - \left(\bl H^{(n)}\right)^{-1}\bl r^{(n)}, 
   \label{NR}
\end{equation}
at each Newton-Raphson iteration $n$ to update the solution from the last iteration $\bl y^{(n)}$ and compute it at the next iteration $\bl y^{(n+1)}$. 
In \eqref{NR}, $\bl r^{(n)}$ denotes the residual vector calculated based on the unknowns at the last iteration $n$ using \eqref{Rmc}. 

The tangent matrix $\bl H^{(n)}$ in \eqref{NR}, which is also dependent on $\bl y^{(n)}$ due to the problem nonlinearity, is computed at each iteration as 
\begin{equation}
\bl H = \frac{\partial \bl r}{\partial \bl y} =  \left[\begin{matrix}
       \bl K & \bl 0 & \bl 0 & \bl G_1 \\     
       \bl 0 & \bl K & \bl 0 & \bl G_2 \\     
       \bl 0 & \bl 0 & \bl K & \bl G_3 \\     
       \bl D_1 & \bl D_2 & \bl D_3 & \bl L \\     
       \end{matrix} \right],
       \label{tangent}
\end{equation}
where the superscript $(n)$ is dropped to simplify the notation.  

The tangent matrix blocks in \eqref{tangent} are explicitly calculated from \eqref{Rmc} and are 
\begin{equation}
\begin{split}
       \bl K_{AB} & = \int_\Omega\left[ \rho N_A \bl \Omega N_B + \rho N_A \bl A_k \frac{\partial N_B}{\partial x_k} + \mu \frac{\partial N_A}{\partial x_k} \frac{\partial N_B}{\partial x_k} \bl I \right]d\Omega \\ 
       & + \int_\Ot \left[\left(-\bl \Omega N_A +  \bl A_k \frac{\partial N_A}{\partial x_k} \right)\rho \bl \tau \left(\bl \Omega N_B  +\bl A_k  \frac{\partial N_B}{\partial x_k} \right)\right] d\Omega,  \\ 
       (\bl G_{AB})_i & = -\int_\Omega \frac{\partial N_A}{\partial x_i} N_B\bl I d\Omega + \int_\Ot \left[ \left(-\bl \Omega N_A + \bl A_k \frac{\partial N_A}{\partial x_k}\right) \bl \tau \frac{\partial N_B}{\partial x_i} \right]d\Omega,  \\ 
       (\bl D_{AB})_j & = \int_\Omega N_A \frac{\partial N_B}{\partial x_j} \bl I d\Omega + \int_\Ot \left[ \frac{\partial N_A}{\partial x_j}\bl \tau \left(\bl \Omega N_B + \bl A_k \frac{\partial N_B}{\partial x_k} \right) \right]d\Omega,  \\
       \bl L_{AB} & = \int_\Ot  \frac{1}{\rho} \frac{\partial N_A}{\partial x_k}\bl \tau\frac{\partial N_B}{\partial x_k} d\Omega.  
\end{split}
\label{K_mat}
\end{equation}

\paragraph{Remarks:} 
\begin{enumerate}
    \item The Newton-Raphson iterations must be initialized at $n=0$ with a guess for all unknowns. In our implementation we take $\bl u^{\rm d(0)}_{Ai} = \bl 0$ for $A\in \eta-\eta_g$, $\bl u^{\rm d(0)}_{Ai} = \bl g_i(\bl x_A)$ for $A\in \eta_g$, and $\bl p^{\rm d(0)}_A = \bl 0$ for $A\in \eta$. 
    \item Dirichlet boundary conditions are imposed at the linear solver level in our implementation so that $\bl u_{Ai}^{{\rm d}(n+1)} = \bl u^{{\rm d}(n)}_{Ai}$ for $A\in \eta_g$. This condition ensures that those boundary conditions remain strongly enforced after proper initialization at $n=0$. 
    \item The Newton-Raphson iterations are terminated when the error falls below a certain tolerance. Namely, iterations in \eqref{NR} are terminated once $\|\bl r^{(n)}\| < \epsilon_{\rm NR}\|\bl r^{(0)}\|$, with $\epsilon_{\rm NR}$ being a user-specified tolerance.
    \item All blocks of the tangent matrix must be updated at each Newton-Raphson iteration for their direct or indirect dependence on $\bl A_i$ through $\bl \tau$. 
    \item In deriving an explicit form for $\bl H$, the dependence of $\bl A_i$ and $\bl \tau$ on $\bl u_i^h$ were neglected. The resulting form of the tangent matrix, although approximate, improves the convergence rate of the Newton-Raphson iterations. This behavior is compatible with the conventional formulation, where the convective velocity and stabilization parameters are frozen in time during the Newton-Raphson iterations for improved convergence within a given time step. 
    \item Solving \eqref{NR} is the most expensive step of the entire algorithm as it requires solving a large linear system of equations. This system is also solved iteratively up to a specified tolerance $\epsilon_{\rm LS}$. We found a relatively large value of $\epsilon_{\rm LS}=0.05$ to be an optimal choice for cases considered here. Although a tighter tolerance improves the convergence rate of the Newton-Raphson method, the cost of those iterations increases at a yet higher rate to produce an overall slower method. Although this optimal value of $\epsilon_{\rm LS}$ has proved relatively universal across many cases considered, we acknowledge that it may differ from what is reported here if one were to consider a case different from those considered in this study.
\end{enumerate}
As described, this algorithm and in particular solving the linear system in \eqref{NR} as is, will be cost-prohibitive.
In the next section, we discuss three optimization strategies that significantly lower the computational cost of the GLS method for fluids.  

%%%%%%%%%%%%%%%%%%%%%%
\subsection{Optimization of the GLS method for the Navier-Stokes} \label{sec:opt}
As is the case with the conventional time formulation of the Navier-Stokes equation, the majority of the cost of the GLS algorithm described above is associated with solving the underlying linear system. 
Thus, our first effort in optimizing the GLS method is focused on reducing the cost of the linear solver. 

To solve the linear system involving $\bl H$ in \eqref{NR}, we employ the GMRES iterative method \cite{saad1986gmres}, which is a robust Krylov subspace method designed for non-symmetric matrices. 
The bulk of the cost of the GMRES method, similar to other iterative methods, is associated with the matrix-vector product operation and to a lower degree, inner products between vectors, the norm of a vector, and the scaling of a vector. 
By reducing the cost of these operations, one can substantially reduce the overall cost of the iterative solver and thereby the GLS method proposed above. 

By targeting a faster solution of the linear system through the three strategies discussed below, we reduce the cost of the GLS method by about an order of magnitude relative to the baseline method from Section \ref{sec:NS}. 

\subsubsection{Using real-valued unknowns}
The first strategy, which targets faster operations at the linear solver level, involves reorganization of the unknowns so that we only solve for the independent unknowns. 
As formulated, \eqref{NR} contains $4(2N-1)|\eta|$ complex-valued unknowns, out of which only half are independent. 
That is so, since velocity and pressure in \eqref{upghDef} are formulated in terms of a variable and its complex conjugate.
This choice significantly simplifies the formulation of the GLS method and its implementation in the finite element solver. 
But it also produces redundant calculations at the linear solver level. 
To avoid this redundancy, we formulate the linear system (i.e., $\bl H$, $\bl r$, and $\bl y$) in terms of the real and imaginary elements of the first $N$ modes. 
The result is a linear system with $8N|\eta|$ real-valued unknowns (the zero steady imaginary component is retained to simplify implementation), which has a memory footprint that is approximately half of the original system. 
On a platform in which numerical operations on complex data types cost roughly double that of the real data type, this optimization cuts the cost of solving the system to approximately half of that of the original system. 
That has been the case in all platforms we tested. 

\subsubsection{Exploiting the structure of $\bl H$} \label{sec:Hstructure}
The second optimization, which is also implemented at the linear solver level, targets the matrix-vector product operation by exploiting the structure of $\bl H$.  
Out of 16 blocks of $\bl H$, 6 blocks are identically zero in \eqref{tangent}. 
By writing a matrix-vector product kernel that is tailored to this system, we avoid performing operations that are zero. 
Secondly, the diagonal blocks for the momentum equation do not depend on the direction. 
Thus, $\bl K$ is retrieved once from the memory and reused three times in the matrix-vector product operation. 
Thirdly, the contribution of the least-squares terms to $\bl G_i$ and $\bl D_i$ in \eqref{K_mat} can be neglected without harming (even improving) the overall convergence rate of the linear solver and the Newton-Raphson iterations. 
Doing so produces $\bl G_i$ and $\bl D_i$ matrices that are diagonal with regard to the mode number. 
By storing and performing only on those diagonal entries, the cost of the gradient and divergence operations are reduced from $24 N^2 N_{\rm nz}$ to $12N N_{\rm nz}$ real-valued operations, where $N_{\rm nz}$ is the total number of connections between nodes in the mesh (i.e., number of non-zeros in the tangent matrix for a scalar problem). 
Putting all these together, this optimization reduces the cost of a matrix-vector product from $64 N^2 N_{\rm nz}$ to approximately $4N(3+cN) N_{\rm nz}$ with $2\le c \le 4$ depending on how close these calculations are to being memory bound.
That implies a severalfold reduction in the cost of matrix-vector multiplication when $N$ is sufficiently large. 
In practice, the overall cost saving, which is an aggregate average of various operations, is more modest, amounting to approximately a factor of 2 reduction in cost for the problem considered in Section \ref{sec:result}. 

\subsubsection{Pseudo-time stepping}
The last optimization strategy employed in this study is to utilize a pseudo-time stepping method. 
This method is adopted to improve the condition number of $\bl H$ by adding a mass matrix to its diagonal blocks~\cite{fried1972bounds} and thereby reducing the cost of the linear solver. 
Unlike the two other optimization strategies that affected the linear solver only, the GLS formulation at the solver level is modified in this case. 
Nevertheless, the converged solution remains identical to the baseline method as the pseudo-time stepping only alters the path to the final solution. 
The implementation of this optimization strategy is simple. 
It involves adding 
\begin{equation*}
    \inner{\bl w^h_i,\rho\frac{\partial \bl u^h_i}{\partial \tilde t} }_\Omega, 
\end{equation*}
to \eqref{NS-weak}, and integrating $\bl u_i^h$ in pseudo-time $\tilde t$ similar to the conventional time formulation of the Navier-Stokes equation. 
Since we are seeking the ``steady'' solution in this case, there is no point in attempting to obtain an accurate estimate at each pseudo-time step. 
Therefore, it is most cost-effective to perform only one Newton-Raphson iteration per pseudo-time step, thereby effectively replacing the Newton-Raphson iterations with iterations in pseudo-time. 

At the outset, this switch may seem pointless, particularly given the fact that the pseudo-time integration will become identical to the Newton-Raphson iterations at the limit of infinite pseudo-time step size $\Delta \tilde t \to \infty$. 
However, the two methods will depart at finite $\Delta \tilde t$ due to the contribution of the pseudo acceleration term in the tangent matrix, which amounts to supplementing $\bl K_{AB}$ in \eqref{K_mat} with
\begin{equation*}
    \left(N_A,\frac{c_1\rho}{\Delta \tilde t}N_B \bl I \right)_\Omega, 
\end{equation*}
where $c_1$ is an $O(1)$ constant that depends on the adopted time integration scheme (in our case, we use the generalized-$\alpha$ method~\cite{jansen2000generalized} with zero $\rho_\infty$, yielding $c_1=1.5$).  
The condition number of $\bl H$ is significantly improved by the inclusion of this mass matrix in  $\bl H$ if $\Delta \tilde t$ is smaller than other time scales in the problem $\Delta \tilde t < \min\{\omega^{-1}, h\|\bl A\|^{-1},  h^2\rho \mu^{-1} \}$.  
This may reduce the overall computation cost by improving the convergence rate of the linear solver. 

Apart from cost saving, our results show the use of pseudo-time stepping also enables solution convergence for some cases where the Newton-Raphson method fails to converge. 
In those cases, the residual may initially decrease with the Newton-Raphson iterations but will fail to drop below a certain threshold as more iterations are performed. 
With pseudo-time stepping, on the other hand, one can derive the residual to an arbitrary small value, thus ensuring the convergence of the solution. 

Even though using a very small $\Delta \tilde t$ reduces the cost of linear solves, it will also increase the number of pseudo-time steps required for convergence. 
At very large $\Delta \tilde t$, on the other hand, the original Newton-Raphson iterations are recovered, making this optimization ineffective. 
The greatest cost saving is achieved between those two extremes at an optimal $\Delta \tilde t$.
For cases studied in Section \ref{sec:result}, that optimal $\Delta \tilde t$ is roughly an order of magnitude larger than the time step size adopted for physical time integration of the Navier-Stokes equations.
Using the optimal $\Delta \tilde t$, this strategy reduces the overall cost of the GLS method by approximately a factor of 2 for the case considered in Section \ref{sec:result}. 

%%%%%%%%%%
\subsection{Stabilizing simulations in the presence of backflow} \label{sec:bfs}
The GLS method formulated in \eqref{NS-weak} will become unstable in the presence of reversal flow through $\Gh$ where a Neumann boundary condition is imposed. 
This issue, which equally affects the conventional time formulation of the Navier-Stokes equation, has been the subject of a series of techniques to stabilize simulations involving partial or bulk backflow through outlets \cite{Esmaily2011backflow, bertoglio2018benchmark, Bazilevs20093534, kim20093551}. 
Among those techniques, the most robust and least intrusive method is constructed by adding a stabilization term to the weak form to ensure the resulting method is coercive and hence stable even in the presence of backflow. 

To construct this backflow stabilization technique for the GLS formulation in \eqref{NS-weak}, let us reexamine Lemma \ref{lem:stability}. 
For the convective term produced by $\dnorm{\cdot}$ to be non-negative, we needed to make sure $\An \coloneqq \bl A_i n_i$ is positive semi-definite. 
If that condition is violated, we may get $b(\bl w,\bl w)<0$, causing instability in the solution.
Thus, to stabilize the GLS method in the presence of backflow, we need to modify eigenvalues of $\An$ on $\Gh$ to ensure they are always positive. 
To accomplish this, we add 
\begin{equation*}
    \inner{\bl w^h_i, \frac{\rho}{2}\beta  |\An|_- \bl u_i^h }_\Gh,     
\end{equation*}
to the right-hand side of \eqref{NS-weak}. 
Here, $\beta \in [0, 1]$ is a prescribed user-defined coefficient and 
\begin{equation*}
     |\An|_- = \frac{\An - |\An|}{2}. 
\end{equation*}

\paragraph{Remarks:} 
\begin{enumerate}
    \item Computing $|\An|_-$ involves applying the absolute function to a matrix. Numerically, $|\An|$ is computed by solving a $(2N-1)\times(2N-1)$ eigenvalue problem $\An = \bl V \bl \Lambda \bl V^H$, then taking the absolute value of eigenvalues to compute $|\An| = \bl V |\bl \Lambda| \bl V^H$. 
    \item In the absence of backflow, all eigenvalues of $\An$ will be positive and $|\An|_- = \bl 0$, thus leaving \eqref{NS-weak} unchanged. This is an attractive property of this backflow stabilization method, which makes it less intrusive by only becoming active when necessary.    
    \item This approach is equivalent to its conventional time formulation counterpart (readily verifiable for $N=1$), which has been shown in the past to be a superior method for dealing with instabilities caused by backflow through Neumann boundaries. 
    \item Stability is guaranteed ($\dnorm{\cdot}>0$) if $\beta=1$. That is so since $|\An|_- = \An$ in the presence of backflow, thus eliminating the contribution of the convective term in $\dnorm{\cdot}$. Even though a stable solution is guaranteed at $\beta=1$, one may still generate stable solutions for smaller values of $\beta$ (e.g., $\beta=0.5$). In such cases, using a smaller value of $\beta$ is recommended to reduce the effect of the added term on the results.
    \item Similar to the Navier-Stokes, the convection-diffusion equation can also become unstable in the presence of backflow. To stabilize it, one must subtract the following term from $b(\bl w^h,\bl \phi^h)$ in \eqref{3D-weak-def}:
    \begin{equation*}
        \inner{\bl w^h, \frac{1}{2}\beta  |\An|_- \bl \phi^h}_\Gh.
    \end{equation*}
\end{enumerate}

%%%%%%%%%%%%%%%%%%%%%
\section{Results} \label{sec:result}
To investigate the performance of the proposed GLS method, a realistic cardiovascular condition is considered below~ \cite{wilson2013vascular, marsden2010new, esmaily2023stabilizedA}.
We use this case for both the flow and tracer transport modeling, through which we establish the importance of the least-squares stabilization term in the formulation. 
At the end of this section, the error and cost of the time-spectral method are studied and compared against an equivalent time formulation, which is described at length in Appendix \ref{sec:app}.

\subsection{The case study}
The test case considered throughout this section is obtained from a patient who has undergone a Fontan operation. 
As shown in Figure~\ref{fig:model}, in this operation, a connection is established between the systemic venous returns, namely superior and inferior vena cava (or SVC and IVC, respectively), and the right and left pulmonary arteries (or RPA and LPS, respectively). 
By creating this connection, the right heart chamber is bypassed to create an in-series blood flow through the systemic and pulmonary circulations. 

\begin{figure}
    \centering
    \includegraphics[width = 0.8\textwidth]{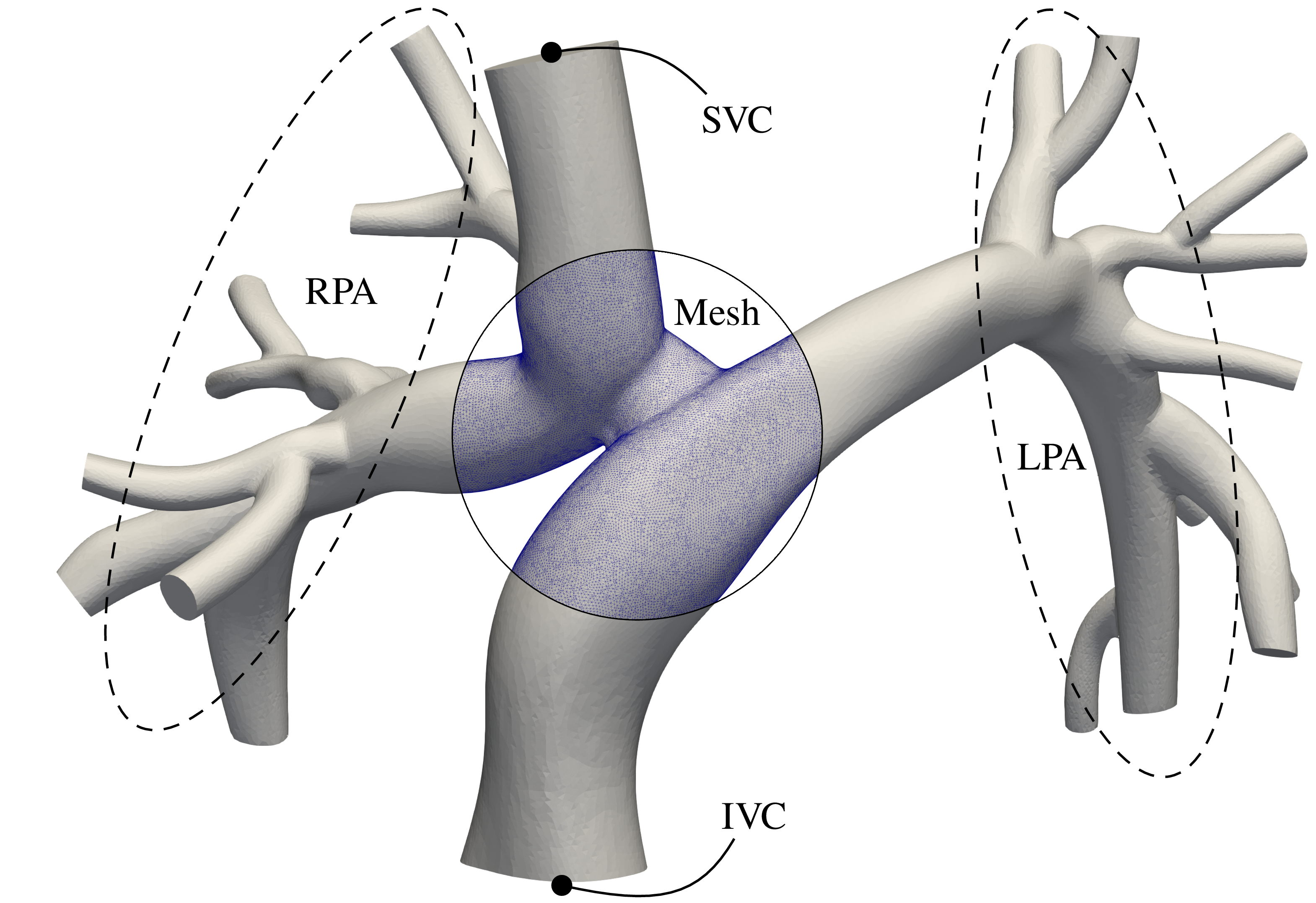}
    \caption{The Fontan geometry considered in this study with boundary faces marked and a sectional view of the mesh. }
    \label{fig:model}
\end{figure}

This life-saving operation is performed after two other open-chest operations on children with a single ventricle as a long-term solution. 
Over time, however, these patients are at risk for renal and liver dysfunctions, protein-losing enteropathy, and failure of the Fontan circulation. 
Part of these complications can be traced back to how hepatic flow from the IVC is distributed between the left and right lungs. 
It has been shown that a hepatic flow that is predominantly directed toward one of the lungs can lead to arteriovenous malformations~\cite{duncan2003pulmonary, yang2013optimization}. 

To ensure a balanced IVC flow distribution to the LPA and RPA, some hospitals have adopted simulations as a surgical planning tool in their practice~\cite{javadi2022predicting}. 
As is the case with any simulation-based design problem, this practice requires simulating flow in many surgical configurations to identify an optimal design. 
This exercise can be cost-prohibitive, thereby making it an excellent candidate for the proposed technology that targets the acceleration of these types of simulations. 

The anatomical model and the flow boundary conditions employed in this study are all obtained from an external repository \cite{wilson2013vascular, marsden2010new}.  
The model is discretized using $1,696,777$ tetrahedral elements (Figure \ref{fig:model}). 
For both the SVC and IVC, an identical unsteady inflow boundary condition is imposed with a parabolic profile (Figure \ref{fig:inflow}). 
Given the smooth and periodic behavior of inflow $Q_{\rm in}(t)$, it is exactly represented using $N=30$ modes (Figure \ref{fig:inflow} inset) or well-approximated using 7 to 10 modes. 
The flow exits the domain through the RPA and LPA via a total of 9 and 11 branches, respectively. 
A zero Neumann boundary condition is imposed on all these outflow branches.
A no-slip boundary condition is also imposed on the vascular walls.

\begin{figure}
    \centering
    \includegraphics[width=0.6\textwidth]{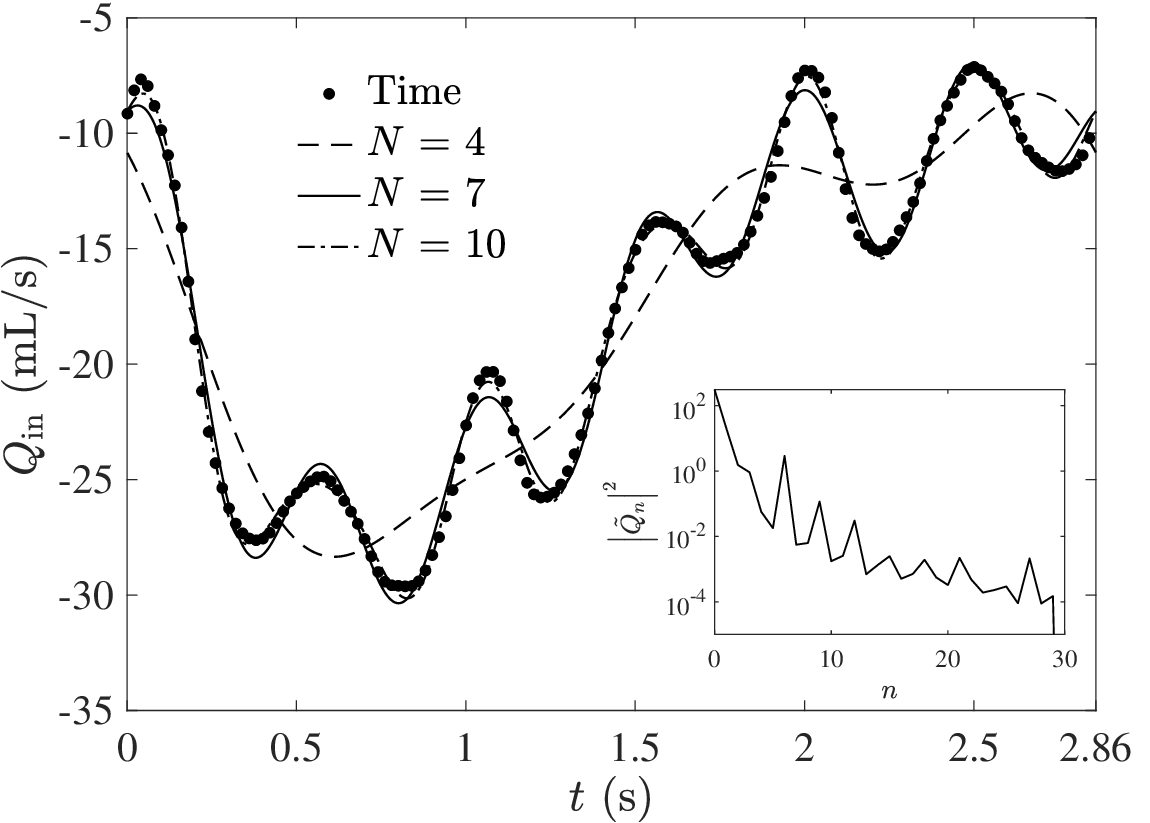}
    \caption{Inlet flow rate for both the SVC and the IVC, $Q_{\mathrm{in}}(t)$. The reference profile used for the time simulation (filled circles) and its best approximation with $N=$ 4, 7, and 10 modes (dashed, solid, and dash-dot lines, respectively) used for time-spectral simulations is shown. The inset shows the flow energy spectrum ($\left| \Tilde{Q}_n\right|^2 = \Tilde{Q}_n \Tilde{Q}_n^*$).}
    \label{fig:inflow}
\end{figure}

In addition to the Navie-Stokes equations, we also solve the convection-diffusion equation to model the IVC flow distribution to the LPA and RPA. 
The boundary conditions for that additional equation are zero Dirichlet at the SVC inlet, unit steady Dirichlet ($\phi_0 = 1$ and $\phi_n=0$ for $n\ne 0$) at the IVC, and zero Neumann (no flux) on the vascular wall. 
To examine the solution stability under a rapid jump, zero Dirichlet is imposed on all outlets.  

The variations in the inflow waveform shown in Figure \ref{fig:inflow} is produced by the respiratory cycle, which is associated with the longer time scale of 2.86 s, and the cardiac cycle, which is 6 times faster with a period of 0.48 s.  
The base frequency in our simulation $\omega$ is thus determined by the respiratory cycle, which is 2.86 s.
To capture the cardiac cycle, $N$ must be at least 7, which explains the peak in the energy spectrum shown in the inset of Figure \ref{fig:inflow} at $N=7$.
By considering a case where dynamics are influenced by two time scales associated with the respiratory and cardiac cycles, we are testing the proposed method in an extreme scenario that requires a larger $N$ than if, e.g., only dynamics were determined by the cardiac cycle only.

The blood density and viscosity are $\rho=1.06$ g/cm$^3$ and $\mu = 0.04$ g/(s$\cdot$cm), respectively. 
The mass diffusivity of the tracer for the convection-diffusion equation is assumed to be the same as that of the kinematic viscosity at $\kappa=0.04/1.06=0.0377$ cm$^2$/s.
Based on these parameters and the IVC hydraulic diameter ($D_{\rm IVC}$), the mean and peak flow Reynolds number ($Re = \rho U D_{\rm IVC}/\mu$) are 468 and 787, respectively. 
The flow Womersley number ($W = D_{\rm IVC}\sqrt{\rho n \omega /\mu}$), on the other hand, ranges from 9.7 at $n=1$ to 29 at $n=9$ (the highest simulated mode below).  
Thus, both the Eulerian and convective acceleration terms play an important role in the solution. 

\subsection{Simulation parameters}
Since the proposed time-spectral method will be compared against standard time formulation, it is important to keep the numerical parameters entering the two methods as similar as possible. 
For this purpose, both spectral and time formulations are implemented in our in-house finite element solver, sharing the same routines for the linear system assembly \cite{mupfes}. 
The tangent matrix optimization described under Section \ref{sec:Hstructure} is applied to both formulations. 
The same iterative linear solver is also used for both, which is based on the GMRES algorithm with restarts \cite{saad1986gmres, Esmaily2015BIPN, shakib1989multi}. 
The size of Krylov subspace and tolerance on the linear solve solution are set to 100 and 0.05, respectively, for both formulations. 
For the time simulations, a maximum is set on the number of GMRES iterations (defined as the total number of matrix-vector products) at 3,000 to reduce its overall cost. 
For the spectral simulations, the optimal cost is achieved when this value is set at a very large number (i.e., 10,000) so that that limit is never reached. 
For the time formulation, the maximum number of Newton-Raphson iterations per time step is set at 10, which was only reached for the first two time steps. 
That number is set to one for the spectral simulations when performing pseudo-time stepping and to a very large number otherwise (i.e., when $\Delta \tilde t = \infty$). 
All computations are performed on the same hardware that includes 16 nodes (Dell PowerEdge R815) that are connected using QDR InfiniBand interconnect (Mellanox Technologies MT26428).
Each compute node has four AMD Opteron(TM) 6380 processors (a total of 64 physical 2.4GHz cores), and 128 GB of PC3-12800 ECC Registered memory.

For the time simulation, the cardiac cycle is discretized to $2,860$ time steps, resulting in a time step size of $\Delta t = 10^{-3}$. 
To select an optimal pseudo-time step size $\Delta \tilde t$ for the spectral simulations, we performed multiple calculations at $N=7$, monitoring the rate of drop in the residual $\|\bl r\|$ at different values of $\Delta \tilde t$. 
As shown in Figure~\ref{fig:sudotime}, this exercise shows that $\Delta \tilde t = 2\times 10^{-2}$ minimizes the cost of the solution. 
A very small or very large $\Delta \tilde t$ drives up the cost as it leads to too many pseudo-time steps or linear solver iterations, respectively. 
This trend continues up to a degree at which at $\Delta \tilde t = \infty$ the residual fails to drop below $10^{-2}$, indicating the importance of pseudo-time stepping for convergence of this method. 

\begin{figure}
    \centering
    \includegraphics[width=0.6\textwidth]{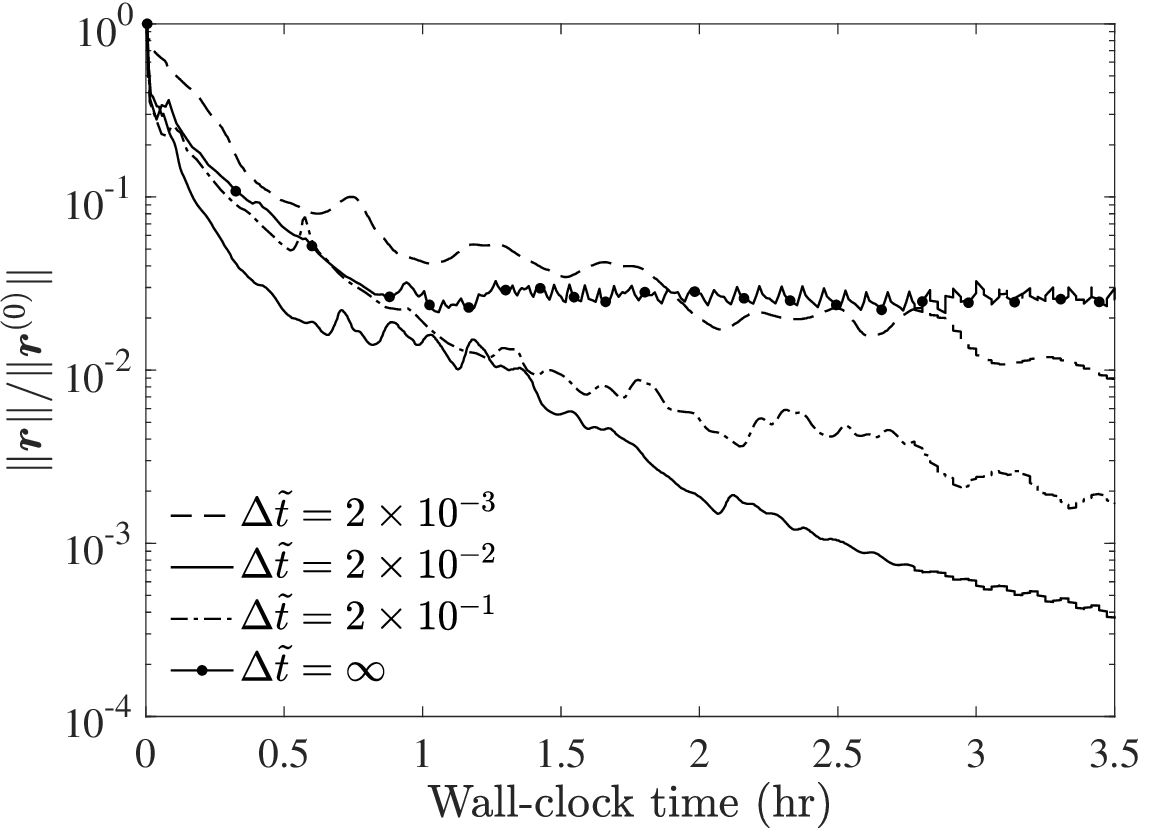}
    \caption{The norm of the residual as a function of simulation wall-clock time in performing time-spectral simulation with different pseudo-time step sizes.}
    \label{fig:sudotime}
\end{figure}

The last numerical parameter is tolerance on the residual $\epsilon_{\rm NR}$. 
This parameter determines the termination point of pseudo-time stepping (or the Newton-Raphson iterations when $\Delta \tilde t= \infty$) for the spectral formulation. 
To select this parameter, we rely on the error in predicted flow at $N=7$ through the LPA. 
This error is calculated as $\|Q_{\rm LPA,ref}-Q_{\rm LPA}\|/\|Q_{\rm LPA,ref}\|$, where $Q_{\rm LPA}$ is the total flow through the LPA branches computed at a given pseudo-time step and $Q_{\rm LPA,ref}$ is its reference quantity that is taken to be $Q_{\rm LPA}$ at the last pseudo-time step. 
As shown in Figure~\ref{fig:tolselect}, the error is proportional to the residual such that at normalized residual of $10^{-3}$, the error associated with the early termination of the Newton-Raphson iterations is approximately 0.1\%. 
Given that this error is below other sources of error (spatial discretization and spectral truncation), we set $\epsilon_{\rm NR}=10^{-3}$ for all simulations. 
For an apple-to-apple comparison, the same tolerance is also used in the time formulation when terminating the Newton-Raphson iterations within each time step. 

\begin{figure}
    \centering
    \includegraphics[width=0.6\textwidth]{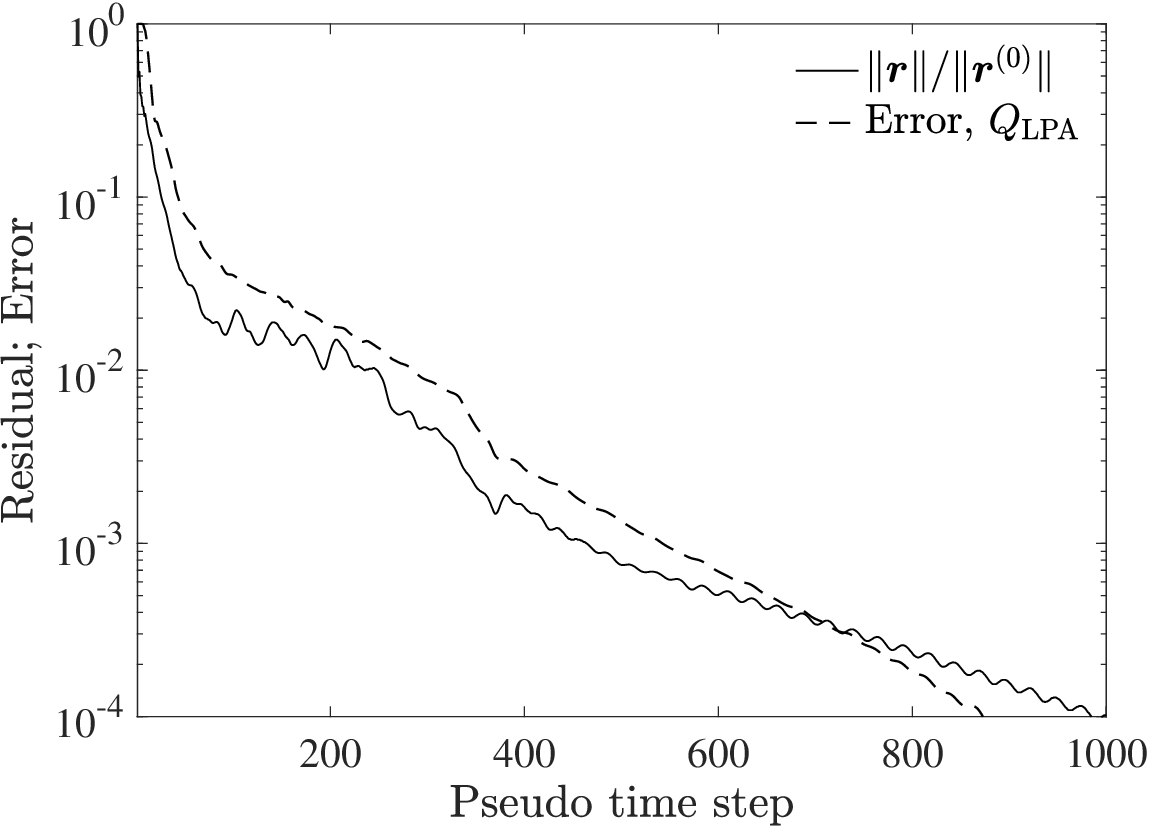}
    \caption{Normalized residual (solid line) and $L_2$-norm error in the predicted flow rate through the LPA (dashed line) versus pseudo-time step.}
    \label{fig:tolselect}
\end{figure}

\subsection{Solution behavior} \label{sec:solB}
For the simulations performed in the time-spectral domain, each computed mode represents a solution ``building block''. 
These modes are independent in the case of the convection of a neutral tracer in a steady flow. 
For the case under consideration, however, these modes interact owing to the unsteady flow that generates a nonzero solution at all modes even if the boundary conditions at all modes but one are homogeneous. 
Figure~\ref{fig:adfcont}, which shows the concentration of the hepatic factor at different modes $\phi_n$, demonstrates this point. 
All boundary conditions are zero for this case except for the steady mode on the IVC face where $\phi_0 = 1$. 
Despite homogeneous boundary conditions, all unsteady modes are nonzero, highlighting how mode coupling via convective term energizes solutions at all modes. 
Clearly, a cost-saving strategy that relies on neglecting the mode coupling (e.g., by simulating the steady mode only) will incur large errors, thus emphasizing the need for the coupled solution strategy described in this study. 

\begin{figure}
    \centering
    \includegraphics[width=0.8\textwidth]{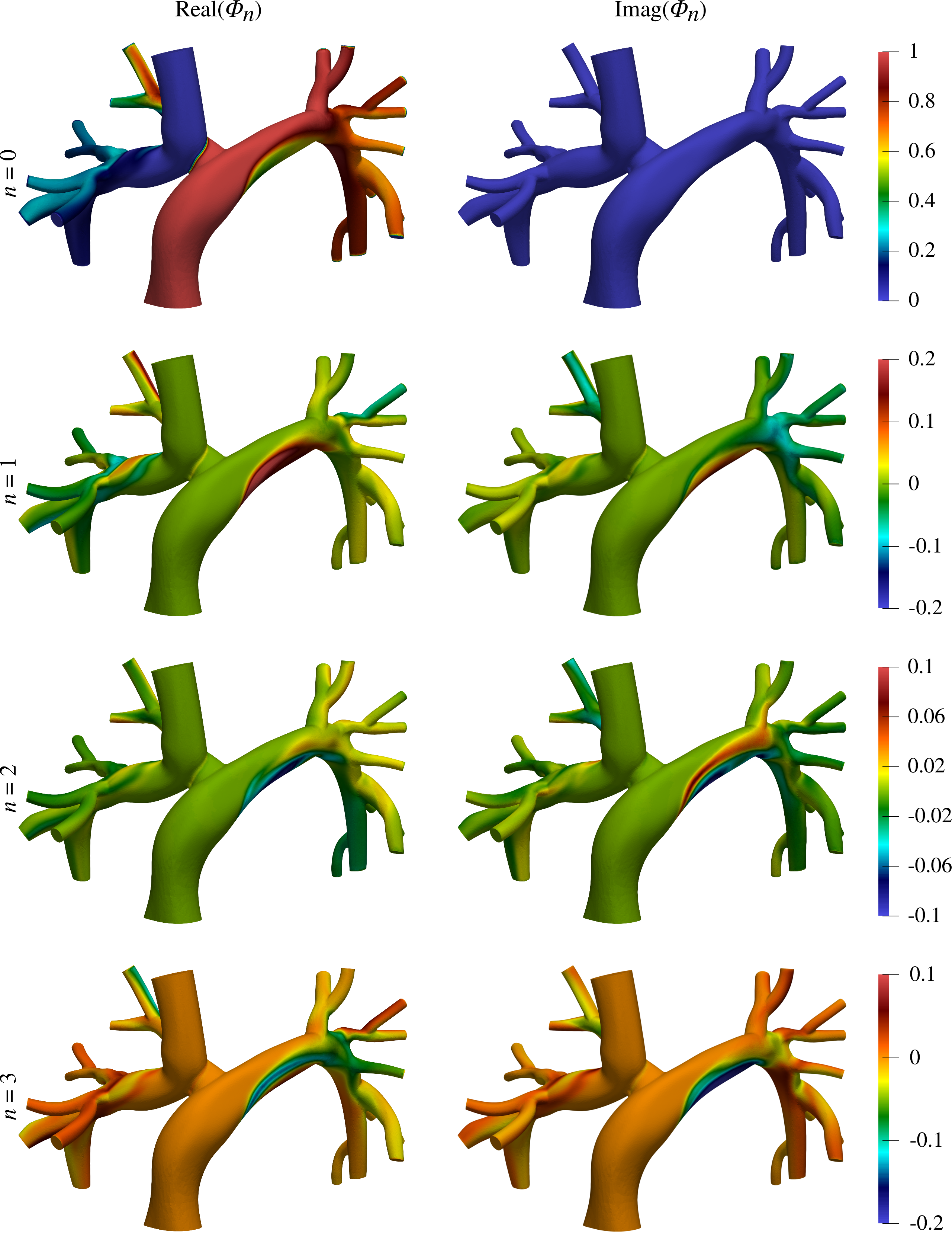}
    \caption{The solution obtained from the GLS method for the convection-diffusion problem subjected to a steady boundary condition at the IVC. The steady, the first, the second, and the third mode are shown in order from top to bottom rows. The left and right columns show the solution that is in-phase and out-of-phase, respectively, with the prescribed inflow boundary condition.  }
    \label{fig:adfcont}
\end{figure}

One can make two more observations based on the results shown in Figure \ref{fig:adfcont}. 
Firstly, while the in-phase Real($\phi_n$) and out-of-phase Imag($\phi_n$) solution are both nonzero for $n\ge1$, only the in-phase component is nonzero when $n=0$. 
That is expected from the symmetry that is built into the GLS method and permits one to use it as a verification test when implementing this method.
Physically, the out-of-phase mode for the steady solution $\phi_0$ has no meaning and must be zero. 
Secondly, the solution at various modes could be significantly different. 
This observation has implications on whether it would be possible to save on the cost by interpolating, rather than computing, a given mode from the neighboring modes. 
That said, however, there have been recent advances in predicting the truncated modes using the computed modes in a different context~\cite{rigas2022data}. 

The solution investigated above corresponded to the flow and tracer simulations performed with $N=4$ independent modes. 
The behavior of the solution to the Navier-Stokes equations as more modes are incorporated in the simulations is qualitatively studied in Figure~\ref{fig:contp}.
In presenting these results, we compare them against the solution obtained from the conventional time formulation to show the gradual convergence of the two as $N$ is increased. 
We selected $t = 1.28$ s within the respiratory cycle to compare the two formulations since the truncation error in the boundary condition is relatively large at this time point (Figure \ref{fig:inflow}). 
 
\begin{figure}
    \centering\includegraphics[width=0.8\textwidth]{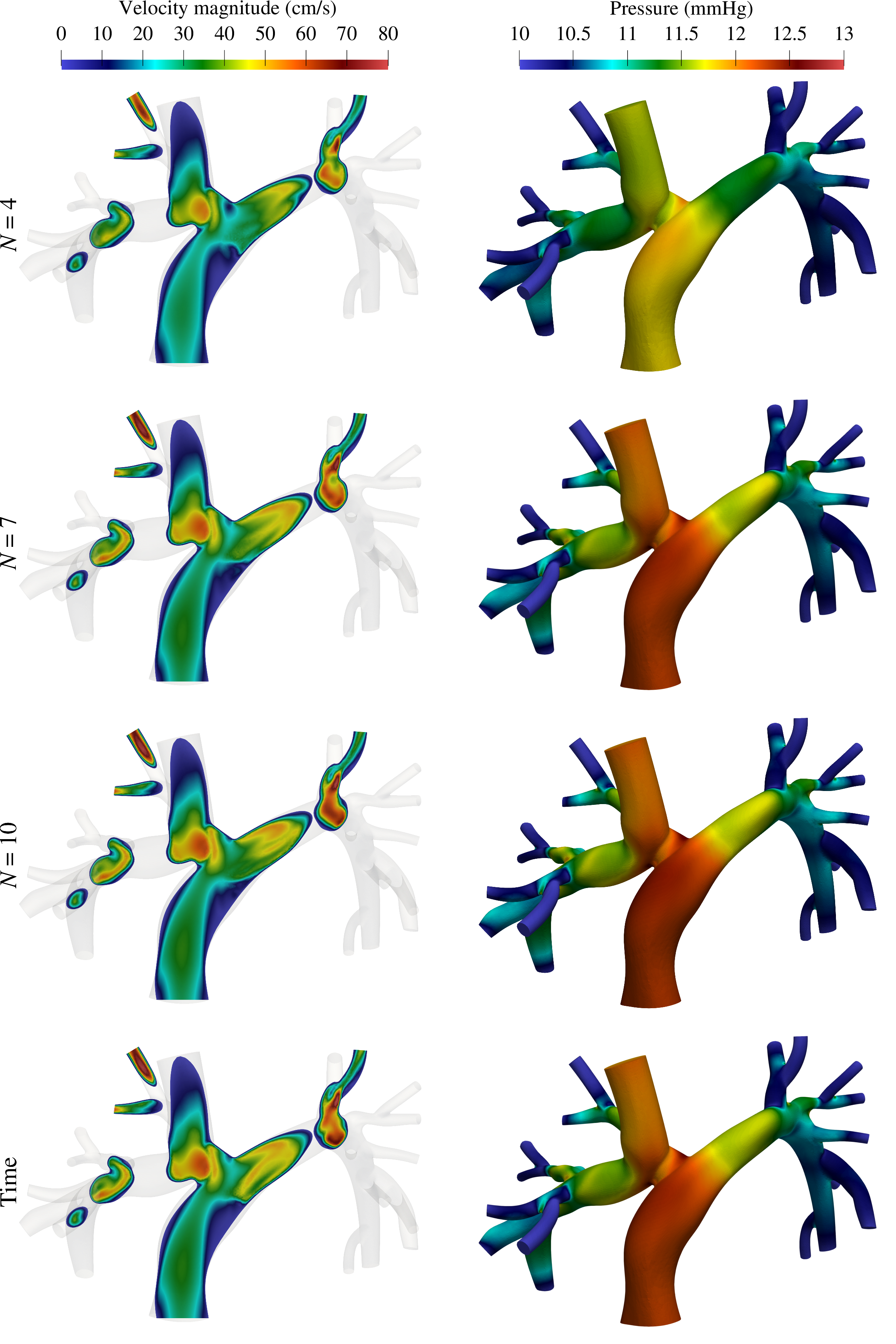}
    \caption{Velocity magnitude (left) and pressure (right) contour obtained from three time-spectral simulations performed using $N=4$ (top row), $N=7$ (second row), and $N=10$ (third row) modes. The results obtained from the conventional time formulation are also shown for comparison (bottom row). These results correspond to $t=1.28$ s. }
    \label{fig:contp}
\end{figure}

These two plots show that as we increase $N$, the spectral formulation's result converges toward that of the temporal formulation. 
While a close agreement between the two is observed for $N\ge 7$, slight differences remain even at larger $N$. 
Based on these results, the difference between the two formulations at small $N$ is dominated by the large truncation error in the boundary condition (Figure \ref{fig:inflow}).
As $N$ increases and truncation error becomes smaller, other sources of error, such as spatial discretization, become the main source of difference between the two formulations. 

\subsection{Importance of stabilization}
All spectral results presented so far were obtained from the Galerkin/least-squares (GLS) stabilization method.
To show the importance of the added penalty term to the formulation, we compare that stabilized method against the baseline Petrov-Galerkin's method (GAL) in this section. 
For this purpose, we consider the convection-diffusion equation as the Navier-Stokes can not be solved using the GAL method on the existing mesh with equal order elements for velocity and pressure. 
Thus, we use the same velocity field (which is computed using the GLS method) to model hepatic factor transport via the GAL method and compare it against the GLS solution that was presented earlier. 

The importance of the stabilization scheme becomes apparent when there is a sharp change in the solution. 
This point is demonstrated in Figure~\ref{fig:galvgls}, which shows the failure of the GAL method to produce a smooth solution near the outlet, where $\bl \phi$ should drop quickly from its upstream value to a zero value imposed at the outlet.
When the solution is converted to the time domain, it must remain bounded by $0 \le \hat \phi \le 1$. 
Nevertheless, a negative value is obtained for $\hat \phi$ in this region. 
Such a non-physical quantity may cause a host of issues in other contexts such as convection-diffusion-reaction-type modeling of thrombus formulation, where $\hat \phi <0$ may create rather than consume a species. 
Ensuring the boundedness of the solution in such scenarios is of paramount importance, thus underscoring the need for the proposed stabilization method.  

\begin{figure}
    \centering
    \includegraphics[width=0.6\textwidth]{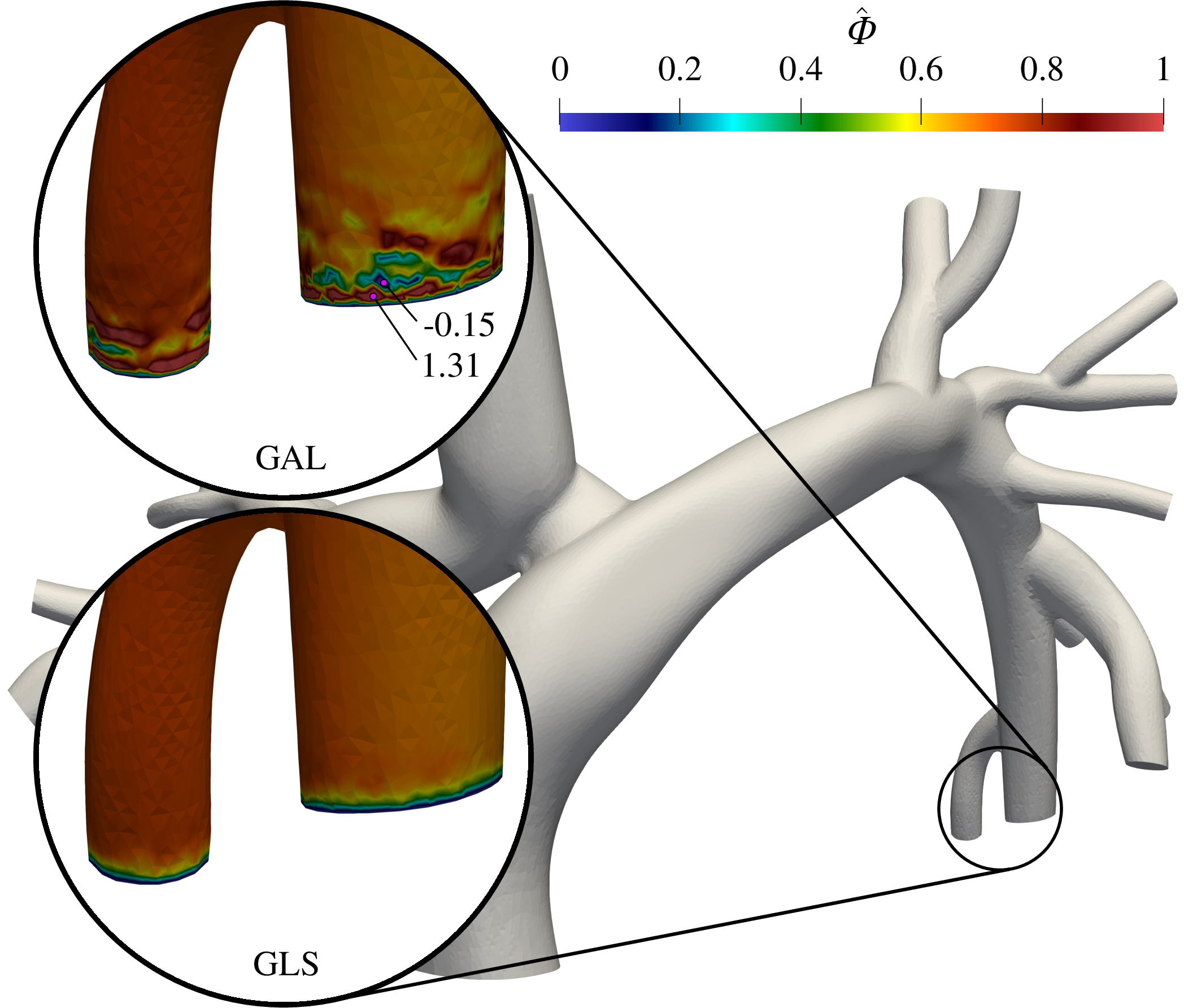}
    \caption{The transport of the hepatic factor modeled using Galerkin's method (top) and the Galerkin/least-squares method (bottom). Results correspond to $t = 1.28$ s. Note the overshoot and undershoot in $\hat \phi$ produced by Galerkin's method. }
    \label{fig:galvgls}
\end{figure}

Setting aside considerations regarding the stability of the solution, the least-squares term also affects the solution cost for the problem studied here.
As shown in Figure~\ref{fig:res_gal_v_lsq}, the number of linear solver iterations, which determines the bulk of the cost, is significantly lowered by the GLS method relative to the GAL method. 
This gap in performance can be attributed to the change in the condition number of the tangent matrix. 
Note that the smallest eigenvalue of the tangent matrix is proportional to $\inf_{\bl w^h}\dnorm{\bl w^h}^2/\norm{\bl w^h}^2$. 
Since the least-square term increases $\dnorm{\bl w^h}^2$ by $\norm{\res(\bl w^h)}^2_{\bl \tau,\Ot}$, one would expect the GLS method to produce a tangent matrix that is less stiff, and hence, an overall method that is more economical. 
That said, one should be careful not to generalize this argument to other situations where an improvement in stability may or may not coincide with an improvement in cost.

\begin{figure}
    \centering
    \includegraphics[width=0.6\textwidth]{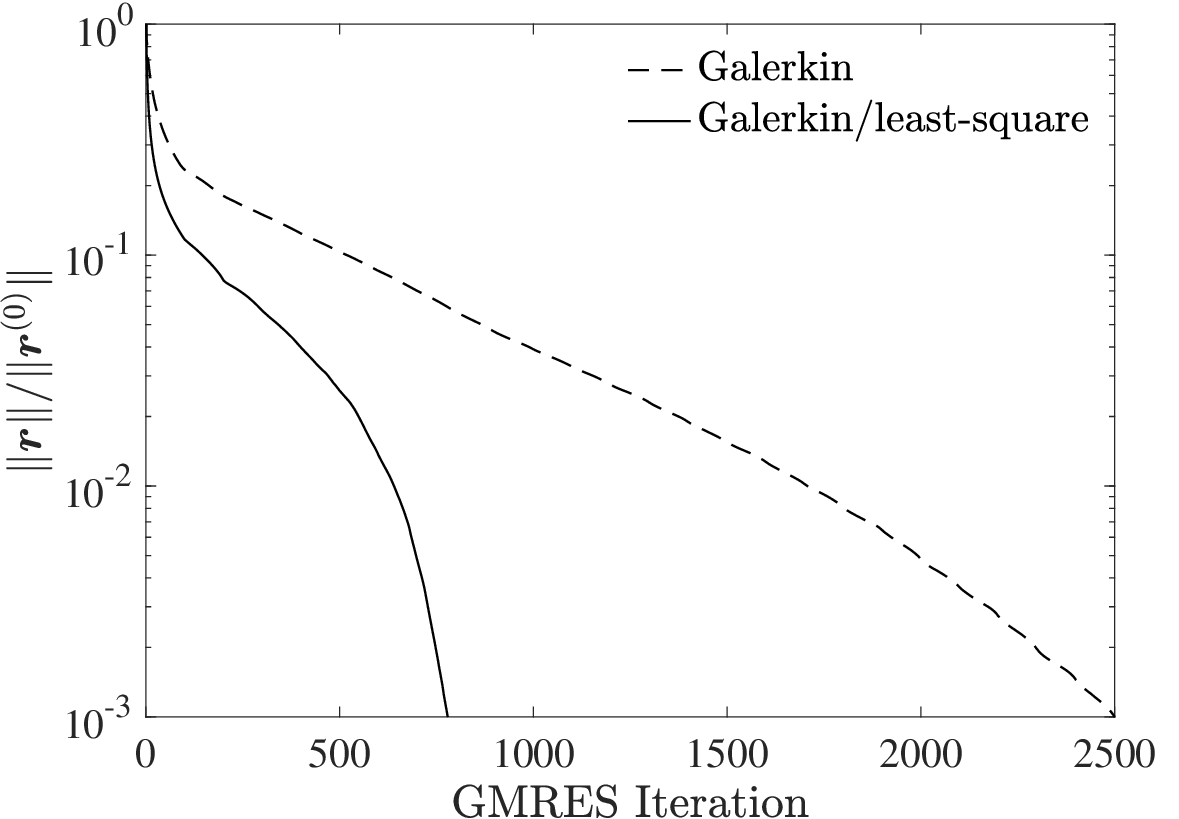}
    \caption{The convergence of the residual versus linear solver (GMRES) iterations for Galerkin's (dashed line) and the Galerkin/least-squares (solid line) methods.}
    \label{fig:res_gal_v_lsq}
\end{figure}

\subsection{Error behavior}
In section \ref{sec:solB}, the behavior of the GLS solution and its variation with the number of modes $N$ were studied qualitatively.
In this section, we investigate the GLS error, or more specifically the truncation error associated with selecting a small $N$, on a more quantitative basis. 
The motivation for doing so is that there is a big cost incentive in keeping $N$ as small as possible. 
Thus, having an idea of the expected error, which is produced by truncating the time-spectral discretization series at small $N$, will allow one to select the smallest possible $N$ that maintains the error below an acceptable tolerance. 

To investigate this point further, the total flow through the LPA branches $Q_{\rm LPA}$ and pressure at the IVC $P_{\rm IVC}$ are considered as the predicted parameters of interest. 
The time variation of these two parameters over the cardiac cycle is shown in Figure~\ref{fig:pqcompare}. 
The overall behavior of the predicted $Q_{\rm LPA}$ using the GLS method at different values of $N$ and how they compare against that of the time formulation closely resembles that of the imposed inflow boundary condition $Q_{\rm in}$ in Figure \ref{fig:inflow}. 

To investigate this close relationship more quantitatively, we have computed the error $\|Q_{\rm LPA, ref} - Q_{\rm LPA}\|/\|Q_{\rm LPA, ref}\|$ using the time formulation prediction $Q_{\rm LPA, ref}$ as the reference. 
The result of this calculation is shown in Figure~\ref{fig:solerr}. 
This figure shows that indeed the error in the predicted $Q_{\rm LPA}$ closely tracks the truncation error in the imposed boundary condition $Q_{\rm in}$.
This observation has a major practical implication. 
It suggests that one can simply rely on the boundary condition truncation error, a parameter that can be easily calculated before running the costly simulation, to assess the expected error in a parameter of interest. 

\begin{figure}
\centering
\begin{subfigure}[b]{0.49\textwidth}
    \centering
    \includegraphics[width=\textwidth]{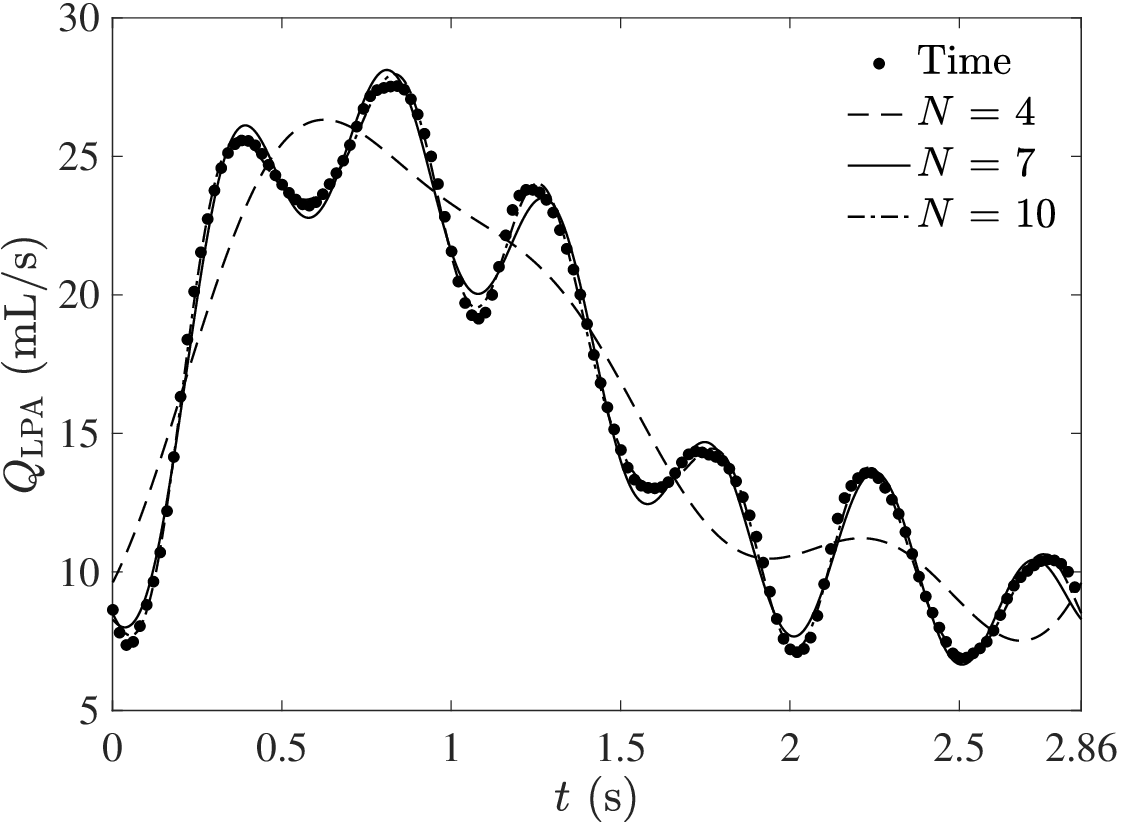}
    \caption{}
    \label{fig:qlpa}
\end{subfigure}
\hfill
\begin{subfigure}[b]{0.49\textwidth}
    \centering
    \includegraphics[width=\textwidth]{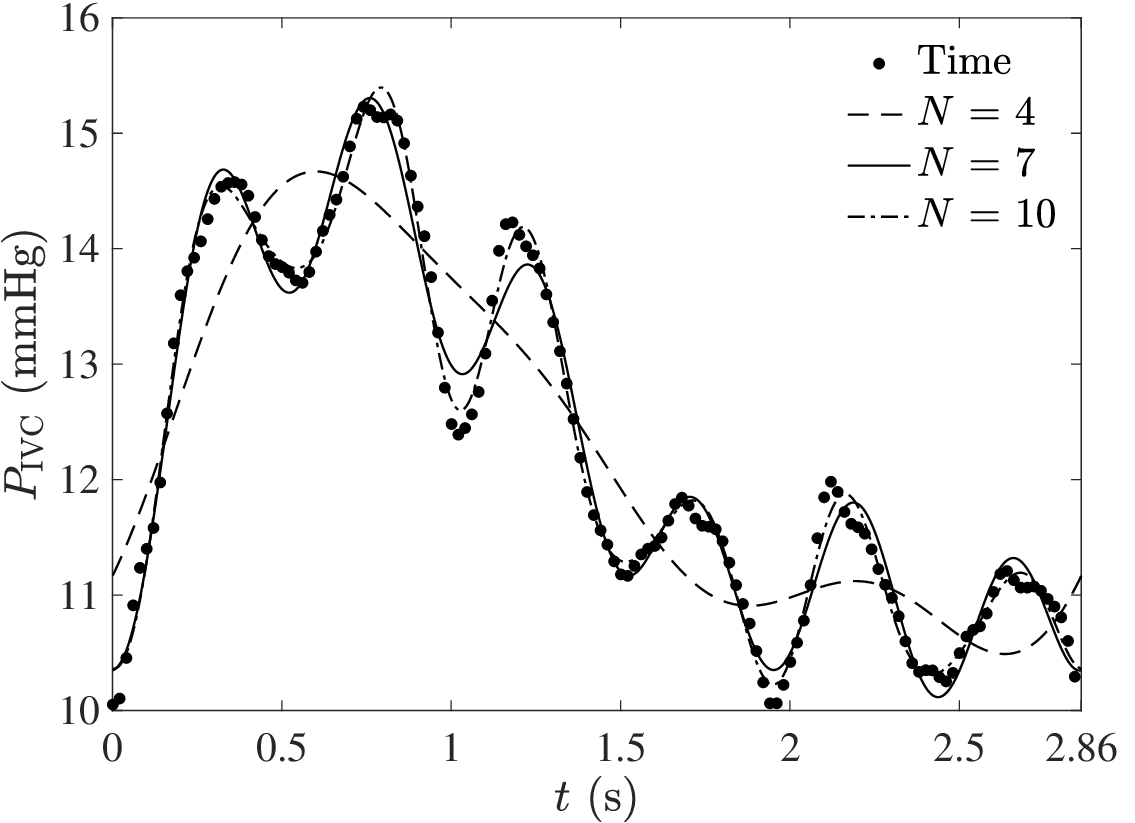}
    \caption{}
    \label{fig:pivc}
\end{subfigure}
\caption{Flow through the LPA branches $Q_\mathrm{LPA}$ (left; a) and pressure at the IVC (right; b) as a function of time $t$ predicted using the conventional time formulation (filled circles) and the proposed GLS method using $N=4$ (dashed line), $N=7$ (solid line), and $N=10$ (dash-dot line). }
\label{fig:pqcompare}
\end{figure}

The behavior of the predicted IVC pressure in Figure \ref{fig:pivc} and the error in its prediction in Figure \ref{fig:solerr} tells a similar story as that of the $Q_{\rm LPA}$. 
Although the relative error in $P_{\rm IVC}$ is shifted relative to that of the boundary condition $Q_{\rm in}$, they both fall at more or less the same rate.
The lower relative error in $P_{\rm IVC}$ at small $N$ is because the pressure at the outlet is 10 mmHg, thus changing the steady pressure of both formulations by the same value, which in turn reduces the relative difference between the two. 

Assuming that a relative error of 2 to 3\% is acceptable, these results suggest that the time-spectral simulations of the case under consideration can be performed with as few as $N=7$ independent modes. 
In the discrete setting, that produces far fewer unknowns than the time formulation, which requires an order of 10$^4$ time steps. 
In the next section, we investigate how this reduction in the dimension of the problem translates to a lower simulation cost. 

\begin{figure}
    \centering
    \includegraphics[width=0.6\textwidth]{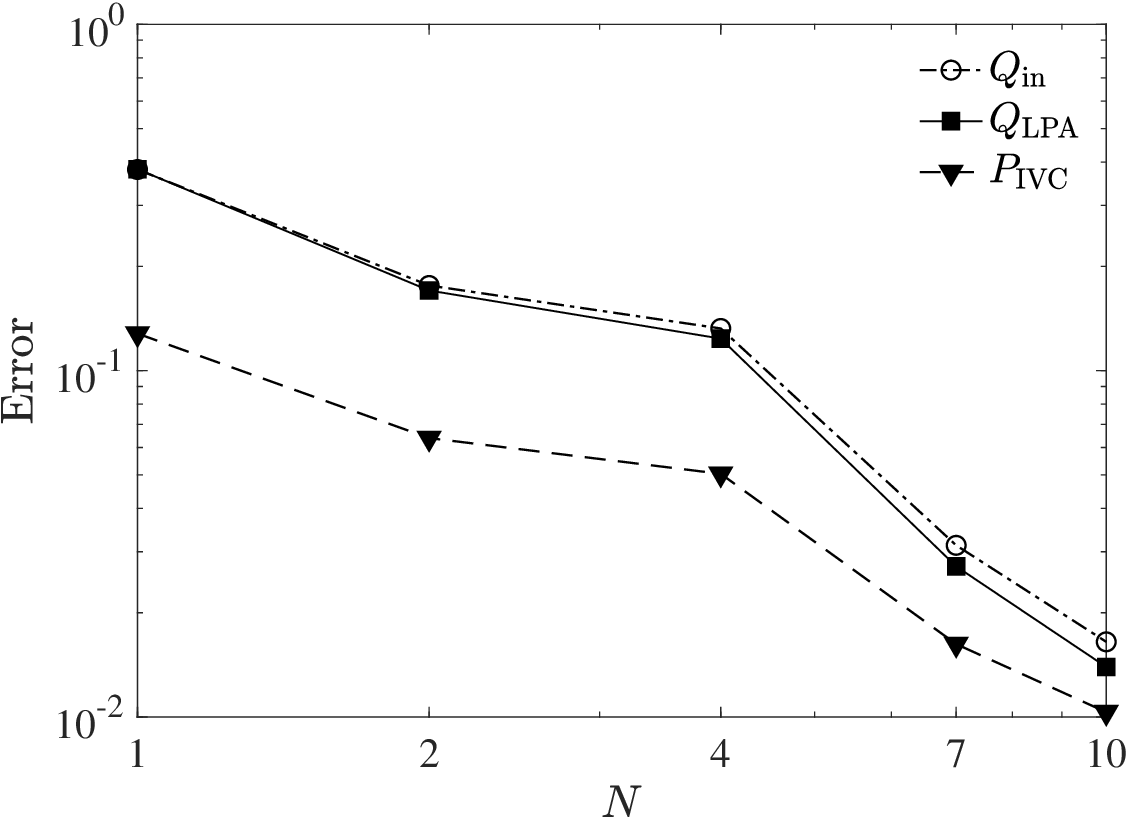}
    \caption{The relative $L_2$-norm error in the predicted IVC pressure $P_{\rm IVC}$ (dashed line with triangles) and LPA flow rate $Q_{\rm LPA}$ (solid line with squares) using the GLS method as a function of the number of simulated modes $N$. The boundary condition truncation error $Q_{\rm in}$ (dash-dot line with circles) is also shown for comparison. } 
    \label{fig:solerr}
\end{figure}

\subsection{Cost analysis}
Earlier in Section \ref{sec:opt}, we introduced several optimization strategies to lower the cost of the GLS method. 
One of those strategies was to take advantage of the structure of $\bl H$ in \eqref{tangent} by performing matrix-vector operations on individual blocks. 
As shown in Figure~\ref{fig:densensf}, this optimization has a twofold effect.
Firstly, it reduces the overall cost of simulation by approximately a factor of two. 
Secondly, it significantly reduces the memory requirement of the GLS method. 
The latter is apparent since the memory requirement for performing these computations at $N=10$ will exceed our hardware capacity if this optimization is not employed. 
With this optimization, however, the memory footprint drops by approximately a factor of 8, theoretically permitting these computations at $N>50$ (verified up to $N=30$). 

\begin{figure}
    \centering
    \includegraphics[width=0.6\textwidth]{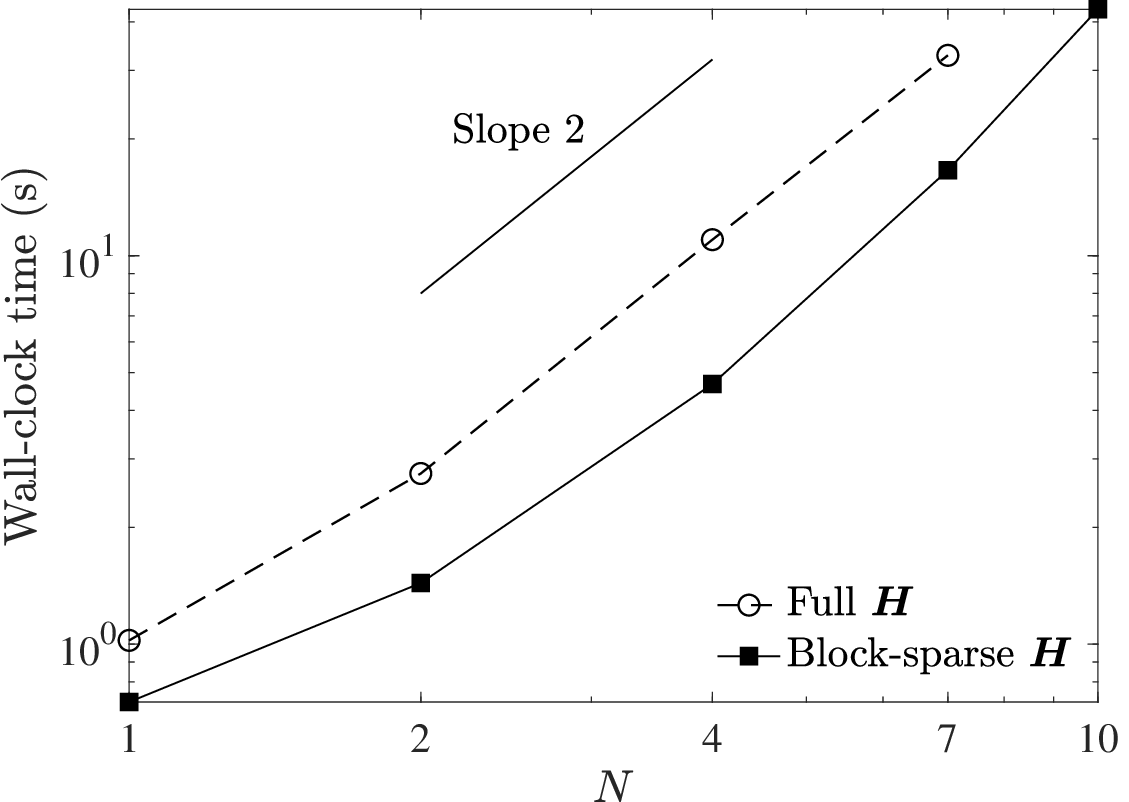}
    \caption{Wall-clock time for performing one pseudo-time step when the tangent matrix $\bl H$ is stored in full (dashed line with circles) or block-by-block (solid line with squares). Full $\bl H$ at $N=10$ is not shown as its memory requirement exceeds that of the hardware on which these computations are performed.}
    \label{fig:densensf}
\end{figure}

Figure~\ref{fig:speedup} shows perhaps the most important result of this test case study.
It reports the cost of the proposed time-spectral method and compares it against the conventional time method. 
First and foremost, a quadratic change in cost is observed as $N$ is changed. 
That strong dependence is an inherent property of the present time-spectral method that has mode-mode coupling incorporated in the tangent matrix.  

Despite such a rapid growth of cost with $N$, the spectral method still presents a significant cost advantage over the time method. 
The performance gap will surely depend on $N$, which must be selected according to the accuracy requirements. 
Nevertheless, at $N=7$, where reasonably accurate predictions are obtained, the present time-spectral method is about an order of magnitude faster than the traditional FEM method for fluid. 
For an application where $N=2$ or 3 is adequate (e.g., when dynamics are determined by the cardiac cycle only), the present method could reduce the simulation turnover time by two orders of magnitude. 

In terms of absolute cost, the simulation performed with the conventional time formulation takes 39 hours to complete on 288 cores. 
On the same number of cores, the time-spectral method at $N=7$ will produce a solution in 4.3 hours (Figure \ref{fig:speedup}). 
That solution turnover time falls below an hour at lower $N$ so that at $N=2$, the simulation is completed in 22 minutes. 

\begin{figure}
    \centering
    \includegraphics[width=0.6\textwidth]{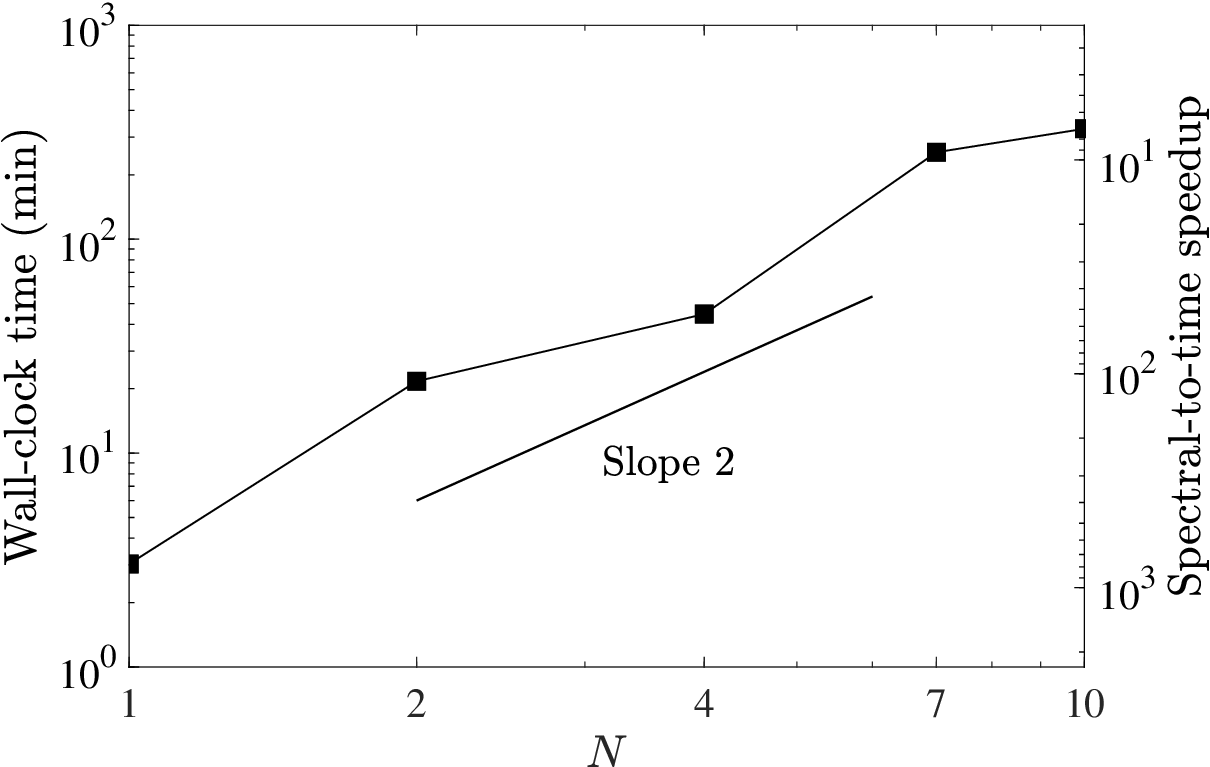}
    \caption{The simulation turnover time for the present time-spectral method as a function of $N$. The speedup relative to the conventional time formulation, which takes 39.2 hours to complete, is also measured on the right axis. These results correspond to simulations that are performed using 288 cores.}
    \label{fig:speedup}
\end{figure}

\subsection{Scalability analysis}
The cost advantage of the proposed method over the conventional method, which was shown in Figure \ref{fig:speedup}, was obtained at a fixed number of compute cores of 288. 
As this number increases, the performance gap between the two methods will widen (Figure \ref{fig:npscale}). 
That highlights another advantage of the proposed time-spectral method over its temporal counterpart: improved parallel scalability. 

To demonstrate this point, we considered $N=7$ and measured the cost of a single pseudo-time step while systematically varying the number of compute nodes from 1 to 16. 
The same exercise is repeated for the time formulation, where we measured the cost of a single time step instead. 
From those results, the parallel speedup is computed for each method and plotted in (Figure~\ref{fig:npscale}). 
This strong scaling analysis is performed on the original mesh, which contains roughly 1.7 million tetrahedral elements, and a coarser mesh with 280 thousand elements, which is adopted here to demonstrate the relative scalability of the two methods under more extreme circumstances. 

\begin{figure}
    \centering
    \includegraphics[width=0.6\textwidth]{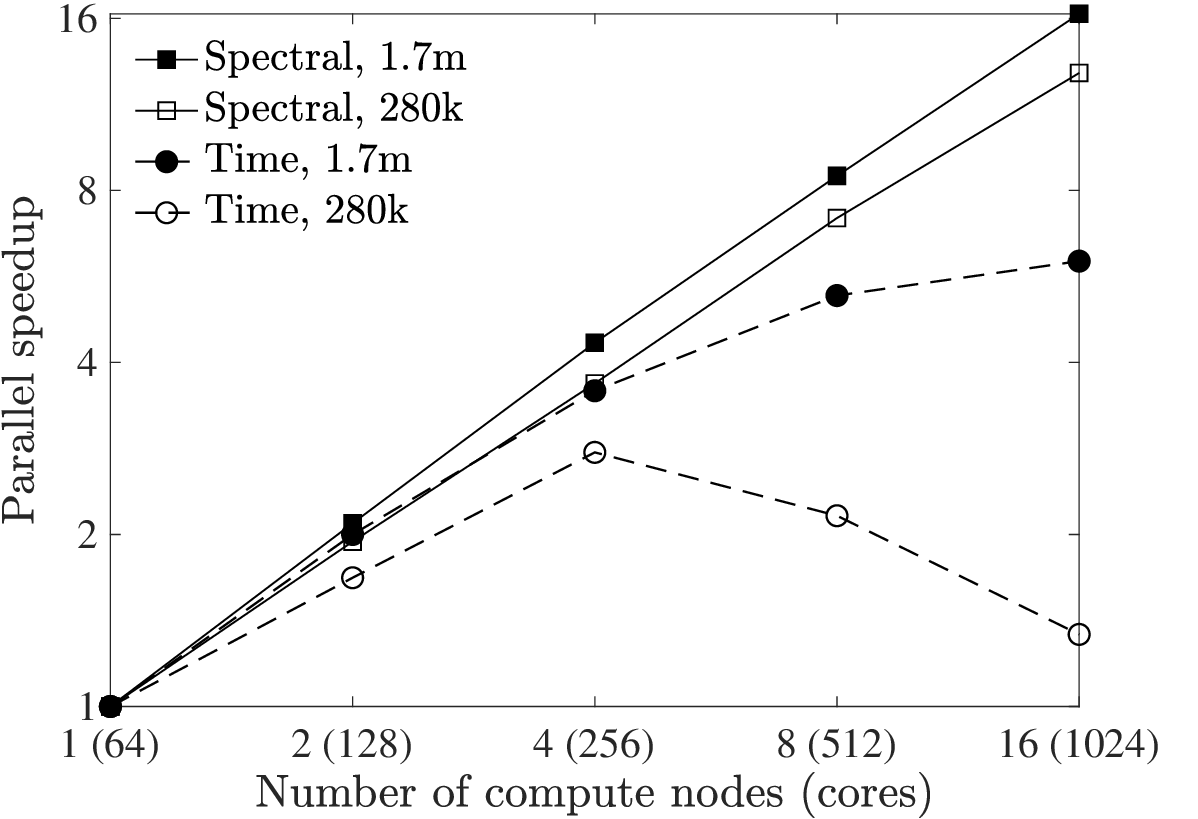}
    \caption{Strong scaling of the time-spectral (solid line with squares) and conventional (dashed-line with circles) formulations using the original mesh (filled symbols) and a smaller mesh with 282k elements (hollow symbols).}
    \label{fig:npscale}
\end{figure}

The time-spectral method exhibits significantly better parallel scalability. 
On the finer mesh, the spectral method demonstrates perfect linear scaling, whereas the time method hardly scales beyond 4 to 8 nodes. 
The improvement in parallel efficiency is even more dramatic on the coarser grid with fewer elements per partition. 
While the spectral method still scales well up to 16 nodes (13-fold speedup), the time method fails to scale beyond 3-fold, which is achieved at 4 nodes. 

We must emphasize the fact that this improved scalability comes for ``free'' as we put no effort into implementing anything special for the spectral method. 
The spectral method is implemented on top of the same set of routines (including the linear solver) as those of the time formulation. 
They both use the same mesh partitioning approach for MPI distributed memory parallelization \cite{Esmaily2015DS}. 

This improved scalability is a direct consequence of the fact that the computations at each mesh node are much more intensive in the spectral method than in the time method. 
Taking the matrix-vector product as an example, the time method performs 10 floating point product operations per connection in the mesh. 
The spectral method, in contrast, performs $4N(3+4N)$ of those operations.
That amounts to an 87-fold increase in the number of local-to-processor operations at $N=7$. 
In terms of processor-to-processor communication latency, the two methods are identical as the number of MPI messages only depends on the number of shared boundaries in the mesh partitions. 
In terms of overhead associated with the limited communication bandwidth, MPI messages for the spectral method are $2N$ longer than those of the time method. 
Thus, in the worst-case scenario, the communication overhead of the spectral will be 14 times that of the time method at $N=7$. 
Therefore, having $4N(3+4N)$ values to compute while $2N$ values to communicate and the same number of messages to send reduces communication overhead by at least $6+8N$ as a percentage of the overall cost. 
A similar argument applies to the inner product where the spectral has a $2N$ scaling advantage over the time formulation.  
As a result, a much more scalable method is obtained without the need for building a separate partitioning strategy that exploits the frequency domain for parallelization. 

\section{Conclusions} \label{sec:conclusion}

A time-spectral finite element method is proposed for simulating physiologically stable cardiorespiratory flows. 
This method, involving the addition of a least-squares penalty term to baseline Galerkin's method, is constructed by generalizing an earlier method designed for the convection-diffusion equation in steady flows. 
The least-squares penalty term in the GLS method is weighted by the tensorial stabilization parameter $\bl \tau$ that is calculated by solving an eigenvalue problem at each Gauss point. 
The proposed GLS method reduces to its temporal counterpart when modeling steady flows and retains its attractive properties, as formally analyzed via Theorem \ref{thm:error} to provide an error estimate for this method in various regimes.

Testing the proposed method using a patient-specific test case showed that a reasonably accurate solution, which closely aligns with the time formulation, can be obtained using as few as $N=7$ modes. 
For this test case, the proposed method provides a substantial cost advantage over the time formulation, reducing solution turnover time from 40 to approximately 4 hours. 
Additionally, the time-spectral method improves parallel scalability by a factor of $O(N)$, allowing for more efficient solutions on high-performance computing platforms.

%%%%%%%%%%%%%%%%%%%%%%%%%%%%%%%%%%%%%%%%%%%%%%%%%
\bibliographystyle{unsrt}
\bibliography{main}

%%%%%%%%%%%%%%%%%%%%%%%%%%%%%%%%%%%%%%%%%%%%%%%%

\appendix
\section{Properties of the GLS method for the Navier-Stokes equations} \label{sec:prop-ns}
In Section \ref{sec:analysis}, we analyzed the properties of the GLS method when it is applied to the time-spectral form of the convection-diffusion equation. 
As was shown in \cite{hughes1989new}, such an analysis can be extended to the Navier-Stokes equations if its governing equations were to be cast in the form of a system of convection-diffusion equations. 

The process is as follows. Assuming that the number of spatial dimensions is 3, we define
\begin{equation*}
\bl \psi  \coloneqq  \left[\begin{matrix}
       \rho \bl u_1  \\     
       \rho \bl u_2 \\     
       \rho \bl u_3 \\     
       \bl p \\     
       \end{matrix} \right], \;\; 
\bl M  \coloneqq  \left[\begin{matrix}
       \bl \Omega & \bl 0 & \bl 0 & \bl 0 \\     
       \bl 0 & \bl \Omega & \bl 0 & \bl 0 \\     
       \bl 0 & \bl 0 & \bl \Omega & \bl 0 \\     
       \bl 0 & \bl 0 & \bl 0 & \bl 0 \\     
       \end{matrix} \right], \;\;
\bl B_i \coloneqq  \left[\begin{matrix}
       \bl A_i & \bl 0 & \bl 0 & \delta_{1i}\bl I \\     
       \bl 0 & \bl A_i & \bl 0 & \delta_{2i}\bl I \\     
       \bl 0 & \bl 0 & \bl A_i & \delta_{3i}\bl I \\     
       \delta_{1i}\bl I & \delta_{2i}\bl I & \delta_{3i}\bl I & \bl 0 \\     
       \end{matrix} \right], \;\;
\bl \kappa \coloneqq  \mu \left[\begin{matrix}
       \bl I & \bl 0 & \bl 0 & \bl 0 \\     
       \bl 0 & \bl I & \bl 0 & \bl 0 \\     
       \bl 0 & \bl 0 & \bl I & \bl 0 \\     
       \bl 0 & \bl 0 & \bl 0 & \bl 0 \\     
       \end{matrix} \right], 
\end{equation*}
as the new state variable and acceleration, convection, and diffusion matrices, respectively. 
With these, the momentum and continuity equations in \eqref{NS-spec} can be formulated as 
\begin{equation*}
    \bl M \bl \psi + \bl B_i \frac{\partial \bl \psi}{\partial x_i} = \frac{\partial }{\partial x_i }\left(\bl \kappa \frac{\partial \bl \psi}{\partial x_i} \right),
\end{equation*}
which has an identical form to \eqref{3D-spec}. Also, note that $\bl M$ and $\bl B_i$ have similar properties to $\bl \Omega$ and $\bl A_i$, respectively, namely $\bl M^H = - \bl M$ and $\bl B_i^H=\bl B_i$. Using this transformation, the GLS method for the Navier-Stokes in \eqref{NS-weak} can also be cast in the form of the GLS method for the convection-diffusion equation in \eqref{3D-weak}. That involves writing the stabilization matrix as a block diagonal matrix that is zero everywhere except for the top three diagonal entries where $\bl \tau$ appears. With this one-to-one correspondence between the two problems, one may follow a process similar to that of Section \ref{sec:analysis} to study the properties of the GLS method for the Navier-Stokes equations.

%%%%%%%%%%%%%%%%%%%%%%%
\section{The time formulation of the Navier-Stokes equations} \label{sec:app}
To simulate incompressible flows in the time domain directly, we must solve \eqref{NS-time} to find $\hat u_i(\bl x,t)$ and $\hat p(\bl x,t)$. 
As we will show later, the finite element method employed for this purpose is equivalent to the GLS method for steady flows, thus permitting an apple-to-apple comparison between the two. 
This method can be derived using the variational multiscale (VMS) method by dropping the unresolved subgrid scale pressure $\hat p^\prime$ from the formulation as well as jump conditions at the element boundaries. 
A more direct approach, however, is to use the least-squares framework and add a penalty term to the baseline Galerkin's method. 
Either way, the Eulerian acceleration term is excluded from this standard process~\cite{bazilevs2012computational}, marking a key difference between the time and spectral formulations in unsteady flows.

The method constructed this way is stated as finding $\hat u_i^h$ and $\hat p^h$ so that for any $w_i^h$ and $q^h$ we have
\begin{equation}
\begin{split}
       & \overbrace{\left(w^h_i,\rho \frac{\partial \hat u^h_i}{\partial t} + \rho \hat u_j^h \frac{\partial \hat u^h_i}{\partial x_j} \right)_\Omega + \left(\frac{\partial w^h_i}{\partial x_j}, -\hat p^h \delta_{ij} + \mu \frac{\partial\hat u^h_i}{\partial x_j}\right)_\Omega  + \left(q^h, \frac{\partial \hat u^h_i}{\partial x_i} \right)_\Omega  }^\text{Galerkin's terms} \\
       + & \underbrace{ \Big(\hat u_j^h \frac{\partial w^h_i}{\partial x_j} , \tau \hat r_i \Big)_\Ot}_\text{The SUPG terms} + \underbrace{ \Big(\frac{1}{\rho}\frac{\partial q^h}{\partial x_i} ,\tau \hat r_i \Big)_\Ot}_\text{The PSPG terms} = \underbrace{ \left( w_i^h,  \hat h n_i \right)_\Gh}_\text{Neumann BCs. term},
\end{split}
\label{NSt-weak}
\end{equation}
where  
\begin{equation*}
    \hat \rres_i(\hat u^h_i, \hat p^h) = \rho \frac{\partial \hat u_i^h}{\partial t} + \rho \hat u_j^h \frac{\partial \hat u^h_i}{\partial x_j} + \frac{\partial \hat p^h}{\partial x_i} - \frac{\partial}{\partial x_j} \left(\mu \frac{\partial \hat u^h_i}{\partial x_j}\right). 
\end{equation*}
The solution procedure for \eqref{NSt-weak} is similar to what was discussed earlier except for the fact that this equation is integrated in time using the second-order implicit generalized-$\alpha$ method~\cite{jansen2000generalized} with $\rho_{\infty} = 0.2$.
. 

One can readily verify that \eqref{NS-weak} reduces to \eqref{NSt-weak} for linear interpolation functions when $\partial^2 \bl w^h_i/\partial x_j^2 = \bl 0$ and in steady flows when $\omega=0$ and $\partial \hat u^h_i/\partial t = 0$. 
The remaining two terms in $\res_i (\bl w_i^h,\bl q_i^h)$ in \eqref{NS-weak} produce the SUPG and PSPG terms in \eqref{NSt-weak}, which stabilize the solution in the presence of strong convection and permit the use of equal order interpolation functions for velocity and pressure, respectively. 

The stabilization parameter $\tau$ in \eqref{NSt-weak} is computed using a relationship that is also very similar to its time-spectral version in \eqref{3D-tau}. That is  
\begin{equation}
    \tau = \left[ \hat \omega^2 + \hat u_i^h G_{ij} \hat u_j^h + C_{\rm I} \kappa^2 G_{ij}G_{ij} \right]^{-1/2}. 
    \label{3D-taut}
\end{equation}
Note that $\hat \omega$, which we define below, must be ideally dropped from \eqref{3D-taut} as it does not appear in the 1D model problem that originates this definition of $\tau$. 
Dropping it, however, will produce instabilities at a small time step size $\Delta t$. 
Those instabilities can be traced back to the acceleration term in the PSPG terms that generates an asymmetric contribution to the tangent matrix that is $(\alpha_m \tau/\Delta t) (\partial N_A/\partial x_i) N_B$, where $\alpha_m$ is an order 1 constant coming from the generalized-$\alpha$ time integration method. 
Since this term is proportional to $\Delta t^{-1}$, it causes convergence issues at a very small time step size. 

To remedy this issue, it is common to take $\hat \omega \propto \Delta t^{-1}$ in \eqref{3D-taut} (c.f.~\cite{bazilevs2012computational}). 
This way, as $\Delta t \to 0$, $\tau \propto \Delta t$ thereby preventing the growth of that asymmetric term in the tangent matrix as $\Delta t \to 0$. 
This solution, however, will create an inconsistent method with a solution that strongly depends on $\Delta t$, particularly when $\Delta t$ is very small~\cite{hsu2010improving, codina2007time, jia2023time}. 
Therefore, the resulting method can not be used as a reference to establish the accuracy of the proposed time-spectral method. 

To overcome the instability issue while ensuring the resulting method is accurate at small $\Delta t$, we resort to a different definition of $\hat \omega$ that is 
\begin{equation}
    \hat \omega = \norm{\frac{\partial \hat {\bl u}^h}{\partial t}}_\Omega \norm{\hat {\bl u}^h}_\Omega^{-1},
    \label{omegah}
\end{equation}
where $\hat {\bl u}^h \coloneqq [\hat u^h_1, \hat u^h_2, \hat u^h_3]^{T}$ in 3D. 
A slightly different version of this definition of $\hat \omega$ was originally proposed in \cite{evans2018residual}. 
The definition shown in \eqref{omegah} is proposed in \cite{jia2023time}. 

As studied at length in \cite{jia2023time}, this method is time-consistent and produces predictions that agree closely with the existing closed-form solution for laminar pipe flow. 
The resulting method is also more stable than taking $\hat \omega = 0$ because of a negative feedback loop. 
As the flow becomes unstable, the solution at the next step deviates from the previous time step, hence producing a very large $\partial u_i^h/\partial t$. 
That leads to a larger $\hat \omega$ in \eqref{omegah} and smaller $\tau$ in \eqref{3D-taut}, which in turn controls the size of the asymmetric term in the tangent matrix to recover stability.

\end{document}